\documentclass{amsart}
\pdfoutput=1

\usepackage{fullpage}
\usepackage{mathrsfs}
\usepackage{latexsym, amssymb}
\usepackage{stmaryrd}
\usepackage{tikz}
\usetikzlibrary{matrix,arrows,patterns,calc,decorations.markings}
\usepackage{paralist}
\usepackage{subfigure}
\usepackage[numbers]{natbib}

\usepackage{CommonMath}

\usepackage{xargs}

\newcommand{\comment}[1]{}

\newcommand {\Q}{{\bf Q}}

\renewcommand {\P}{{\bf P}}
\newcommand {\from}{{\colon}}
\newcommand{\into}{{\hookrightarrow}}
\newcommand{\onto}{\twoheadrightarrow}
\newcommand{\st}[1]{\mathscr{#1}}      
\newcommand{\sh}[1]{{\mathscr #1}}        
\newcommand{\orb}[1]{{\mathcal #1}}     
\newcommand{\cat}[1]{\mathbf{#1}}       
\renewcommand{\sp}[1]{#1}               
\newcommand{\thk}[1]{{\mathscr #1}}     
\renewcommand {\o}[1]{\overline{#1}}    
\renewcommand{\k}{{\mathbf K}}                    
\newcommand{\ideal}[1]{\langle #1 \rangle}     
\newcommand{\dual}[1]{#1^{\vee}}
\newcommand{\isom}{\stackrel\sim\longrightarrow}
\DeclareMathOperator{\Crimp}{Crimp}

\DeclareMathOperator{\Def}{Def}
\DeclareMathOperator{\Gl}{Gl}

\DeclareMathOperator{\supp}{supp}
\DeclareMathOperator{\spec}{Spec}

\DeclareMathOperator{\mult}{mult}
\DeclareMathOperator{\br}{br}
\DeclareMathOperator{\Sym}{Sym}
\DeclareMathOperator{\id}{id}
\DeclareMathOperator{\tr}{tr}
\DeclareMathOperator{\length}{length}
\DeclareMathOperator{\G}{{\mathbf G}}
\DeclareMathOperator{\A}{{\mathbf A}}

\DeclareMathOperator{\cok}{cok}
\DeclareMathOperator{\Isom}{Isom}
\DeclareMathOperator{\Aut}{Aut}

\DeclareMathOperator{\Pic}{Pic}

\DeclareMathOperator{\Hom}{Hom}
\DeclareMathOperator{\Quot}{Quot}

\newcommand{\f}[1]{\llbracket #1 \rrbracket}
\newcommand{\hens}[1]{{#1}^{\rm sh}}
\newcommand{\compl}[1]{\widehat{#1}}
\newcommand{\tw}[1]{\widetilde{#1}}
\newcommand{\orbi}{{\rm orb}}

\newcommand{\s}{\mathbf S}
\newcommand {\sm}{{\rm sm}}
\newcommand {\gen}{{\rm gen}}
\newcommand {\sing}{{\rm sing}}
\newcommand {\DM}{{\rm DM}}

\tikzset{math/.style = {execute at begin node=$, execute at end node=$}}
\tikzset{map/.style = {font=\scriptsize}}
\tikzset{equal/.style = {double distance=.15em}}
\newcommandx{\cartsquare}[3][3=.35]{
    \draw ($#1!#3!#2$) rectangle ($#2!#3!#1$);
}


\numberwithin{equation}{section}

\newcounter{thmalph}

\newtheorem{thmintro}[thmalph]{Theorem}

\title{Compactifications of Hurwitz spaces} 
\keywords{Hurwitz spaces, Hassett--Keel program}
\author{Anand Deopurkar}
\address{
  Harvard University, One Oxford Street, Cambridge MA 02139
}
\email{anandrd@math.harvard.edu}

\begin{document}
\maketitle

\begin{abstract}
We construct several modular compactifications of the Hurwitz space $H^d_{g/h}$ of genus $g$ curves expressed as $d$-sheeted, simply branched covers of genus $h$ curves. These compactifications are obtained by allowing the branch points of the covers to collide to a variable extent. They are very well-behaved if $d = 2, 3$, or if relatively few collisions are allowed. We recover as special cases the spaces of twisted admissible covers of Abramovich, Corti and Vistoli and the spaces of hyperelliptic curves of Fedorchuk.
\end{abstract}

\section{Introduction}\label{sec:intro}
A fascinating aspect of the study of moduli spaces is the exploration of their birational geometry. By varying the moduli functor, one can construct a menagerie of birational models of a moduli space. These models are not only interesting in themselves, but also provide an unprecedented opportunity to explicitly study the Mori theory of some of the most interesting higher dimensional varieties. Pioneered by Hassett and Keel, such a study for the moduli space of curves continues to be a topic of intense current research \citep{fedorchuk10:_alter_compac_modul_spaces_curves}.

We take up a similar study of a related moduli space, namely the Hurwitz space. The Hurwitz space $H^d_g$ is the moduli space of genus $g$ curves expressed as $d$-sheeted, simply branched covers of $\P^1$. These spaces have played a vital role in our understanding of the moduli of curves. They parametrize some of the most interesting loci, especially for small $d$, such as the hyperelliptic locus for $d = 2$ and the trigonal locus for $d = 3$. These loci in $M_g$ are conjectured to play a crucial role in the Hassett--Keel program. Furthermore, in many ways, the Hurwitz spaces are easier to handle than $M_g$, and it is reasonable to aspire for a fruitful Hassett--Keel program in their context.

In this paper, we lay the groundwork for constructing a number of compactifications of $H^d_g$. The standard compactification due to \citet{Harris82:_Kodair_Dimen_Of_Modul_Space_Of_Curves} (further refined by \citet*{acv:03}) parametrizes admissible covers, which are a particular kind of degenerations of simply branched covers where the branch points are forced to remain distinct. Our main idea is to explore compactifications where the branch points are allowed to coincide to a given extent. Although covers of $\P^1$ are our primary interest, we treat the case of covers of curves of arbitrary genus; this presents no additional difficulty. 

 We now describe our main results without diving into many technicalities. Fix a positive integer $d$ and non-negative integers $g$, $h$ and $b$ related by the Riemann--Hurwitz formula
\[ 2g-2 = d(2h-2) + b.\]
Let $H^d_{g/h}$ be the space of smooth genus $g$ curves expressed as $d$-sheeted, simply branched covers of smooth genus $h$ curves. In symbols, $H^d_{g/h} = \{(\phi \from C \to P)\}$, where $C$ and $P$ are smooth curves of genus $g$ and $h$ respectively, and $\phi$ is a simply branched cover of degree $d$. Let $M_{h;b}$ be the space of $b$ distinct unordered points on smooth genus $h$ curves. In symbols, $M_{h;b} = \{(P, \Sigma)\}$, where $P$ is a smooth curve of genus $h$ and $\Sigma \subset P$ a reduced divisor of degree $b$. We have a morphism $\br \from H^d_{g/h} \to M_{h;b}$ defined by 
\[\br \from (\phi \from C \to P) \mapsto (P, \br\phi).\]
Our first technical result is the construction of an unscrupulous enlargement $\st H^d_{g/h}$ of $H^d_{g/h}$ over a likewise unscrupulous enlargement of $\st M_{h;b}$ of $M_{h;b}$; we now describe both. The non-separated Artin stack $\st M_{h;b}$ is the stack of $(P, \Sigma)$, where $P$ is an at worst nodal curve of arithmetic genus $h$ and $\Sigma \subset P$ a divisor of degree $b$ supported in the smooth locus. The precise definition of $\st H^d_{g/h}$ is slightly technical, but roughly speaking, it is the stack of $(\phi \from C \to P)$, where $P$ is an orbinodal curve of arithmetic genus $h$ and $\phi$ a finite cover of degree $d$, \'etale over the nodes and the generic points of the components of $P$. There is no restriction on the singularities of $C$. The orbinodes serve to encode the admissibility criterion of \citet{Harris82:_Kodair_Dimen_Of_Modul_Space_Of_Curves}, following the idea of \citet*{acv:03}. The reader unfamiliar with this construction may imagine $P$ to be simply a nodal curve and $\phi$ an admissible cover over the nodes of $P$. As said before, the stacks $\st M_{h;b}$ and $\st H^d_{g/h}$ are non-separated enlargements of $M_{h;b}$ and $H^d_{g/h}$, respectively. They continue to be related by the branch morphism $\br \from \st H^d_{g/h} \to \st M_{h;b}$ given by
\[ \br \from (\phi \from C \to P) \mapsto (P, \br\phi).\]
\begin{thmintro}[\autoref{thm:big_hurwitz}]\label{thm:intro_big_hurwitz}
  With the above notation, $\st H^d_{g/h}$ and $\st M_{h;b}$ are algebraic stacks, locally of finite type. The morphism $\br \from \st H^d_{g/h} \to \st M_{h;b}$ is proper and of Deligne--Mumford type.
\end{thmintro}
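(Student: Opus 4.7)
The plan is in four steps. First, the base stack $\st M_{h;b}$ is algebraic. The stack $\overline{\mathcal M}_h^{\mathrm{nod}}$ of all (possibly unstable) nodal curves of arithmetic genus $h$ is well known to be algebraic and locally of finite type, and $\st M_{h;b}$ sits inside the relative Hilbert scheme $\Hilb^b(\mathcal C/\overline{\mathcal M}_h^{\mathrm{nod}})$ as the open substack on which the length-$b$ subscheme is a Cartier divisor supported in the smooth locus. Openness of these conditions gives algebraicity and local finiteness of type.

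Second, the morphism $\br$ is algebraic. Working locally on $\st M_{h;b}$, I functorially enhance a family $(P/S,\Sigma)$ to a twisted nodal curve $\widetilde P/S$ via a root-stack construction at the nodes (the twisted-curve formalism of Abramovich--Corti--Vistoli, as systematized by Olsson). Then the fibre of $\br$ over $(P/S,\Sigma)$ is the stack of finite covers $\phi\from C\to\widetilde P$ of degree $d$, \'etale over the twisted nodes and the generic points of the components, with simple branching over $\Sigma$. Such a cover is equivalent to a sheaf of $\mathcal O_{\widetilde P}$-algebras locally free of rank $d$; these are parametrized by an algebraic stack over $S$ (combine the algebraicity of finite flat covers of a proper flat base with Olsson's theorem on Hom stacks from twisted curves). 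The \'etaleness and simple-branching requirements cut out an open substack. Gluing over a smooth atlas of $\st M_{h;b}$ yields that $\st H^d_{g/h}$ is algebraic and locally of finite type.

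Third, $\br$ is of Deligne--Mumford type because the relative automorphism group scheme of $(\phi\from C\to\widetilde P)$ is finite and unramified over the base. Indeed, a relative automorphism is rigidified by its action on the formal neighbourhood of any simple ramification point of $\phi$, since the local model of simple branching admits no nontrivial symmetry commuting with the base. Thus the relative diagonal is a closed immersion into a finite \'etale scheme, which is unramified.

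The main obstacle is properness of $\br$, and this is where the orbinodal structure earns its keep. I verify the valuative criterion. Let $R$ be a DVR with fraction field $K$ and residue field $k$; suppose given an $R$-point of $\st M_{h;b}$ and a compatible $K$-point of $\st H^d_{g/h}$. The goal is to produce, after finite base change on $R$ and uniquely given a chosen central limit $(P_k,\Sigma_k)$, a filling $R$-point of $\st H^d_{g/h}$. This is precisely the admissible-cover extension theorem of Harris--Mumford, in its orbifold refinement: after an extension of $R$ trivializes the local monodromy of the generic cover, the generic cover extends uniquely across the twisted special fibre $\widetilde P_k$, with the root-stack indices at nodes recording exactly the admissible ramification data. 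The delicate point is that $\st M_{h;b}$ is itself non-separated, so the criterion must be verified relative to a pre-chosen central point of the base; once existence and uniqueness of the filling over such a chosen limit are established, properness of $\br$ follows.
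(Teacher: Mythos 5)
The most serious gap is in your properness argument. The valuative criterion here is \emph{not} the Harris--Mumford admissible-cover extension theorem, because the limit $(P_k,\Sigma_k)$ is prescribed by the given $R$-point of $\st M_{h;b}$ and $\Sigma_k$ may be non-reduced: branch points of the generic cover are allowed to collide in the special fibre. The orbifold/monodromy argument you invoke handles the nodes and the generic points of the components, where the cover is \'etale and one extends a torsor over a simply connected punctured henselian spectrum. It says nothing about a smooth point $p$ of $P_k$ at which several branch points of the generic fibre come together: there $O_{P,p}$ is a two-dimensional regular local ring, the cover is defined and \emph{ramified} on the punctured spectrum $P_p^\circ$, and no covering-space argument applies. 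Step 4 of the proof of \autoref{thm:proper} fills exactly this hole: one writes $\st A_d=[B_d/\Gl_d]$ with $B_d$ affine, uses Horrocks' theorem (\autoref{thm:vb_on_punctured_surface}) that every vector bundle on $P_p^\circ$ is trivial to extend the $\Gl_d$-torsor (equivalently, the rank-$d$ bundle $\phi_*O_C$), and then extends the algebra structure by Hartogs and continuity. Without this step the filling need not exist, and it is precisely the new ingredient beyond the Abramovich--Vistoli and Harris--Mumford arguments.

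Two further points. First, the orbinodal structure is not a functorial root-stack enhancement of $(P/S,\Sigma)$: the indices at the nodes depend on the cover (they record its local monodromy), so different points of the fibre of $\br$ over the same $(P,\Sigma)$ live on different twisted curves. One must therefore work over the whole stack $\st M^\orbi$ of pointed orbinodal curves (\autoref{thm:orbinodal_stack}) rather than over a single chosen twisting, and finiteness of type then requires bounding the stabilizer orders by $d!$ via the representability condition. Relatedly, the covers in the fibre of $\br$ are not ``simply branched over $\Sigma$''; they have branch \emph{divisor} equal to $\Sigma$, with arbitrary (possibly non-Gorenstein) local behaviour --- allowing this is the whole point of the construction. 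Second, your Deligne--Mumford argument via rigidification at a simple ramification point fails because $\phi$ need not have any simple ramification points (and may even be everywhere \'etale). The correct argument is that infinitesimal automorphisms of $\orb C$ over $\orb P$ lie in $\Hom(\Omega_{\orb C/\orb P},O_{\orb C})$, which vanishes because $\Omega_{\orb C/\orb P}$ is torsion (the cover is generically \'etale) while $O_{\orb C}$ is torsion-free ($\orb C$ being reduced); see \autoref{thm:dm}.
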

\autoref{thm:intro_big_hurwitz} gives a recipe to construct many compactifications of $H^d_{g/h}$. Indeed, let $\orb X \subset \st M_{h;b}$ be a Deligne--Mumford substack containing $M_{h;b}$. If $\orb X$ is proper (over the base), then $\orb X \times_{\st M_{h;b}} \st H^d_{g/h}$ is a Deligne--Mumford stack containing $H^d_{g/h}$ that is also proper (over the base). In this sense, any suitable compactification of the space of branch divisors yields a corresponding compactification of the space of branched covers. Furthermore, we prove that if $\orb X$ has a projective coarse space, then so does $\orb X \times_{\st M_{h;b}} \st H^d_{g/h}$ (\autoref{thm:projectivity}).

What makes the above recipe particularly fruitful is that we know several such $\orb X$'s, leading to several compactifications of $H^d_{g/h}$. These $\orb X$'s are the spaces of weighted pointed curves of \citet{hassett03:_modul}, which we now recall. Let $\epsilon > 0$ be a rational number satisfying $b \cdot \epsilon + (2h-2) > 0$. A point of $\st M_{h;b}$ given by $(P, \Sigma)$ is called \emph{$\epsilon$-stable} if $\epsilon \cdot \mult_p\Sigma \leq 1$ for all $p \in P$ and $\omega_P(\epsilon \Sigma)$ is ample. Let $\o{\orb M}_{h;b}(\epsilon) \subset \st M_{h;b}$ be the open substack consisting of $\epsilon$-stable marked curves. Then $\o{\orb M}_{h;b}(\epsilon)$ is a proper Deligne--Mumford stack that contains $M_{h;b}$ and admits a projective coarse space. Set 
\[ \o{\orb H}^d_{g/h}(\epsilon) = \o{\orb M}_{h;b}(\epsilon) \times_{\st M_{h;b}} \st H^d_{g/h}.\]
We call points of $\o{\orb H}^d_{g/h}(\epsilon)$ \emph{weighted admissible covers} or \emph{$\epsilon$-admissible covers}. Roughly speaking, these are admissible covers where $\lfloor 1/\epsilon \rfloor$ of the branch points can coincide.\begin{thmintro}[\autoref{thm:e_admissible_covers}]\label{thm:intro_e_admissible}
  With the above notation, the stack $\o{\orb H}^d_{g/h}(\epsilon)$ of $\epsilon$-admissible covers is a proper Deligne--Mumford stack that contains $H^d_{g/h}$ as an open substack. It admits a projective coarse space $\o H^d_{g/h}(\epsilon)$ and a branch morphism to the stack $\o {\orb M}_{h;b}(\epsilon)$ of $\epsilon$-stable $b$-pointed genus $h$ curves.
\end{thmintro}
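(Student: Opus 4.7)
The plan is to deduce \autoref{thm:intro_e_admissible} from \autoref{thm:intro_big_hurwitz} together with two inputs: Hassett's theorem on the weighted pointed moduli space, and the projectivity result \autoref{thm:projectivity} alluded to in the introduction. I therefore take for granted that $\o{\orb M}_{h;b}(\epsilon)$ is a proper Deligne--Mumford substack of $\st M_{h;b}$ with projective coarse space, that it contains $M_{h;b}$ as an open substack, and that the inclusion $\o{\orb M}_{h;b}(\epsilon) \into \st M_{h;b}$ realises the former as a substack.

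The definition $\o{\orb H}^d_{g/h}(\epsilon) := \o{\orb M}_{h;b}(\epsilon) \times_{\st M_{h;b}} \st H^d_{g/h}$ makes the theorem essentially a catalogue of properties preserved under base change along the morphism $\br \from \st H^d_{g/h} \to \st M_{h;b}$. First I would note that by \autoref{thm:intro_big_hurwitz}, $\br$ is representable by proper Deligne--Mumford morphisms, so its pullback to $\o{\orb M}_{h;b}(\epsilon)$ is proper and of Deligne--Mumford type over $\o{\orb M}_{h;b}(\epsilon)$. Since a stack proper and Deligne--Mumford over a proper Deligne--Mumford stack is itself a proper Deligne--Mumford stack, this gives the first half of the theorem. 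The branch morphism asserted in the theorem is simply the projection from the fiber product onto the first factor.

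Next I would check that $H^d_{g/h}$ sits inside $\o{\orb H}^d_{g/h}(\epsilon)$ as an open substack. Since $M_{h;b} \subset \o{\orb M}_{h;b}(\epsilon)$ is open and since $H^d_{g/h} = M_{h;b} \times_{\st M_{h;b}} \st H^d_{g/h}$ by the definitions given in the introduction, openness is preserved by base change and the claim follows at once. One should also observe that over the locus $M_{h;b}$ the source curve is automatically smooth of genus $g$, so the fiber product recovers the classical Hurwitz space rather than a strictly larger stack.

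The step I expect to be the most delicate is projectivity of the coarse space $\o H^d_{g/h}(\epsilon)$. Properness of the coarse space follows by general principles from properness of the stack, and the existence of a coarse space is a consequence of the Keel--Mori theorem applied to a proper Deligne--Mumford stack. What remains is to exhibit an ample line bundle, and this is the content of the separately stated \autoref{thm:projectivity}; I would invoke it applied to $\orb X = \o{\orb M}_{h;b}(\epsilon)$, whose coarse space is projective by Hassett's theorem. Thus, once \autoref{thm:intro_big_hurwitz} and \autoref{thm:projectivity} are in hand, \autoref{thm:intro_e_admissible} is a formal consequence; the substance of the argument has been outsourced to those two results.
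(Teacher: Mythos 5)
Your proposal is correct and matches the paper's approach: the paper's own proof of \autoref{thm:e_admissible_covers} is the one-line observation that it ``follows directly from \autoref{thm:big_hurwitz} and \autoref{thm:projectivity}'' (together with Hassett's \autoref{thm:hassett_spaces}), and your write-up simply makes that formal base-change argument explicit. Nothing is missing.
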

\autoref{thm:intro_e_admissible} recovers some spaces that have already appeared in literature. Plainly, the space $\o{\orb H}^d_{g/h}(1)$ is the space of twisted admissible covers of \citet*{acv:03}. In this space, the branch points are forced to remain distinct, and hence the only singularities of $C$ are its nodes over the nodes of $P$. As $\epsilon$ decreases, $\o{\orb H}^d_{g/h}(\epsilon)$ allows more and more branch points to coincide, and thus allows $C$ to have progressively nastier singularities. We highlight that these singularities need not be Gorenstein (\autoref{ex:singularities})! For $d = 2$ and $h = 0$, these spaces are the spaces of hyperelliptic curves constructed by \citet{fedorchuk10:_modul_hyp}.

In general, the local structure of $\st H^d_{g/h}$ is horrible. It may even have components other than the closure of $H^d_{g/h}$ (\autoref{ex:extra_components}). For $d = 2$ and $3$, however, $\st H^d_{g/h}$ is smooth and irreducible (\autoref{thm:smoothness_for_d23}). The geometry of the resulting compactifications of the spaces of trigonal curves is the topic of forthcoming work \citep{deopurkar12}.

The morphism $\br \from \o {\orb H}^d_{g/h}(\epsilon) \to \o {\orb M}_{h;b}(\epsilon)$ is finite for $\epsilon$ close to $1$, but not in general (\autoref{ex:extra_components}). The fibers of $\br$ parametrize ``crimps'' of a fixed $d$-sheeted cover. We analyze these fibers in detail (\autoref{sec:crimps}).

Having described the main results, let us now describe our technical motivation. Our approach is inspired by \citet*{acv:03}. We view a finite cover $\phi \from C \to P$ as a family of length $d$ schemes parametrized by $P$, or equivalently, as a map $\chi \from P \to \st A_d$, where $\st A_d$ is the `moduli stack of length $d$ schemes.' This reinterpretation allows us to use the techniques from the well-studied topic of compactifications of spaces of maps into stacks. The stack $\st H^d_{g/h}$ is thus constructed following \citet*{av02}, which explains the central role played by orbinodal curves. Note, however, that their results cannot be used directly since they deal with maps into Deligne--Mumford stacks and $\st A_d$ is not Deligne--Mumford. Nevertheless, the fact that $\st A_d$ is the quotient of an affine scheme by the general linear group allows us to extend the essential arguments without much trouble.

We carry out our constructions in a slightly more general setting than that described above. It is useful in applications to have the flexibility to fix the ramification type of some fibers of the cover. Therefore, we work in the context of covers with arbitrary branching over a divisor and prescribed branching over distinct marked points on the base. Furthermore, it is notationally easier and conceptually no harder to refrain from fixing any numerical invariants as far as we can. Therefore, instead of $\st H^d_{g/h}$ and $\st M_{h;b}$, we simply have $\st H^d$ and $\st M$.

The paper is organized as follows. In \autoref{sec:prelim}, we introduce $\st A_d$ and recall the notion of pointed orbinodal curves. In \autoref{sec:main}, we define $\st H^d$ and state the main theorem (\autoref{thm:big_hurwitz}), which we prove in \autoref{sec:proofs}. In \autoref{sec:local}, we study the local structure of $\st H^d$. In \autoref{sec:projectivity}, we prove projectivity and describe the weighted admissible cover compactifications. In \autoref{sec:crimps}, we analyze the fibers of $\br \from \st H^d \to \st M$. \autoref{sec:proofs} is by far the most technical. The crucial geometric steps, namely the valuative criteria, are contained in \autoref{sec:proper}.

\subsection*{Conventions}\label{sec:conventions}
We work over a field $\k$ of characteristic zero. All schemes are understood to be locally Noetherian schemes over $\k$. We reserve the letter $k$ for (variable) algebraically closed $\k$-fields. While working over an algebraically closed field $k$, ``point'' means  ``$k$-point,'' unless specified otherwise. An \emph{algebraic stack} or an \emph{algebraic space} is in the sense of \citet*{laumon00:_champ}. 

If $X$ is an algebraic space, and $x \to X$ a geometric point, then $O_{X,x}$ denotes the stalk of $O_X$ at $x$ in the \'etale topology and we set $X_x = \spec O_{X,x}$. The analytically inclined reader may imagine $O_{X,x}$ to be the ring of convergent power series around $x$ and $X_x$ to be a small simply-connected analytic neighborhood of $x$ in $X$. For a local ring $R$, the symbol $\hens{R}$ denotes its strict henselization and $\compl{R}$ its completion. 

The projectivization of a vector bundle $E$ is denoted by $\P E$; this is the space of one-dimensional \emph{quotients} of $E$. A morphism $X \to Y$ is \emph{projective} if it factors as a closed embedding $X \into \P E$ followed by $\P E \to Y$ for some vector bundle $E$ on $Y$.

A \emph{curve} over a scheme $S$ is a flat, proper morphism whose geometric fibers are purely one-dimensional. The source of this morphism could be a scheme, an algebraic space or a Deligne--Mumford stack; in the last case it is usually denoted by a curly letter. A curve over $S$ is connected if its geometric fibers are connected. \emph{Genus} always means arithmetic genus. By the genus of a stacky curve, we mean the genus of its coarse space. A \emph{cover} is a representable, flat, surjective morphism. The symbol $\mu_n$ denotes the group of $n$th roots of unity; its elements are usually denoted by $\zeta$.
\section{Preliminaries}\label{sec:prelim}
\subsection{The classifying stack of length $d$ schemes}\label{sec:Ad}
Consider the category $\st{A}_d$ fibered over $\cat{Schemes}$ whose objects over a scheme $S$ are $(\phi \from X \to S)$, where $\phi$ is a finite flat morphism of degree $d$. To prove that $\st{A}_d$ is indeed an algebraic stack, we consider a more rigidified version. The data of a finite flat morphism $\phi \from X \to S$ is equivalent to the data of on $O_S$ algebra $A$ which is locally free of rank $d$ as an $O_S$ module. In the rigidified version of $\st{A}_d$, we consider such algebras along with a marked $O_S$ basis. Namely, we consider the contravariant functor $\st{B}_d \from \cat{Schemes} \to \cat{Sets}$ defined by
\begin{equation*}
  \st{B}_d \from S \mapsto \left\{\parbox{.7\textwidth}{Isomorphism classes of $(A,\tau)$, where $A$ is an $O_S$ algebra and $\tau \from A \to O_S^{\oplus d}$ an isomorphism of $O_S$ modules.}\right\}.
\end{equation*}
\begin{proposition}\label{thm:based_algebras}(\citep[Proposition~1.1]{poonen08})  The functor $\st{B}_d$ is representable by an affine scheme $B_d$ of finite type.
\end{proposition}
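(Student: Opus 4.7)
The idea is that once the module $A$ is trivialized as $O_S^{\oplus d}$ via $\tau$, an algebra structure reduces to a finite list of structure constants satisfying polynomial relations. So I would represent $\st{B}_d$ as a closed subscheme of an affine space of structure constants.

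\textbf{Step 1: The moduli of raw multiplication/unit data.} Using $\tau$, transport the $O_S$-algebra structure on $A$ to $O_S^{\oplus d}$. Writing $e_1,\dots,e_d$ for the standard basis, the multiplication $\mu\from O_S^{\oplus d}\otimes O_S^{\oplus d}\to O_S^{\oplus d}$ is determined by constants $c_{ij}^k\in O_S$ defined by $\mu(e_i,e_j)=\sum_k c_{ij}^k e_k$, and the unit element is recorded by $u^k\in O_S$ with $1=\sum_k u^k e_k$. Thus the data of $(\mu,u)$ is tautologically the same as a morphism $S\to \A^{d^3+d}_\k=\spec\k[C_{ij}^k,U^k]$.

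\textbf{Step 2: Cutting out the axioms.} The axioms that $(O_S^{\oplus d},\mu,u)$ actually be a commutative, associative, unital $O_S$-algebra become polynomial identities in the $C_{ij}^k$ and $U^k$. Specifically, commutativity gives $C_{ij}^k-C_{ji}^k=0$; associativity, computed by expanding $\mu(\mu(e_i,e_j),e_\ell)=\mu(e_i,\mu(e_j,e_\ell))$, gives
\[
\sum_m C_{ij}^m C_{m\ell}^k-\sum_m C_{j\ell}^m C_{im}^k=0
\]
for all $i,j,\ell,k$; and the unit axiom $\mu(u,e_i)=e_i$ yields $\sum_m U^m C_{mi}^k=\delta_i^k$. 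Let $B_d\subset\A^{d^3+d}_\k$ be the closed subscheme defined by these relations; it is an affine scheme of finite type over $\k$.

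\textbf{Step 3: Verifying the universal property.} Over $B_d$ there is a tautological algebra $\mathcal A_{\mathrm{univ}}:=O_{B_d}^{\oplus d}$ equipped with the multiplication and unit given by the universal structure constants and the tautological trivialization $\tau_{\mathrm{univ}}=\id$. Given an $S$-point of $\st{B}_d$, i.e.\ a pair $(A,\tau)$, the constants $c_{ij}^k,u^k\in\Gamma(S,O_S)$ determined by $\tau$ define a unique morphism $S\to B_d$ pulling $(\mathcal A_{\mathrm{univ}},\tau_{\mathrm{univ}})$ back to $(A,\tau)$. Conversely, any morphism $S\to B_d$ produces a pair $(A,\tau)$ whose constants satisfy the algebra axioms by construction, so the two operations are mutually inverse. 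This identifies $\st{B}_d$ with the functor of points of $B_d$.

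\textbf{Main obstacle.} There is no substantial geometric obstacle: the content is just book-keeping. The only point that deserves care is ensuring one has written down enough relations to force $O_S^{\oplus d}$ to be an honest commutative unital $O_S$-algebra (associativity, commutativity, and a two-sided unit), and that these relations are in fact polynomial in the $C_{ij}^k$ and $U^k$, which is clear by direct expansion. Everything else is a formal check that the closed subscheme so defined represents the desired functor.
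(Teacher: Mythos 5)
Your proposal is correct and follows exactly the same route as the paper: trivialize the module via $\tau$, record the structure constants of the multiplication and unit in $\A^{d^3+d}$, and cut out the algebra axioms by polynomial equations. You are in fact slightly more explicit than the paper, which merely asserts that the axioms translate into ``certain polynomial conditions'' without writing them down or checking the universal property.
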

\begin{proof}
  Let $e_1, \dots, e_d$ be the standard basis of $O_S^{\oplus d}$. Then the data $(A, \tau)$ is equivalent to an $O_S$ algebra structure on $O_S^{\oplus d}$. An $O_S$ algebra structure is specified by maps of $O_S$ modules
  \[i \from O_S \to O_S^{\oplus d}, \text{ say } 1 \mapsto \sum d_ie_i\]
  and 
  \[ m \from O_S^{\oplus d} \otimes_S O_S^{\oplus d} \to O_S^{\oplus d}, \text{ say } e_i \otimes e_j \mapsto \sum c_{ij}^k e_k.\]
  These maps make $O^{\oplus d}_S$ an $O_S$ algebra with identity $i(1)$ and multiplication $m$ if and only if the $c_{ij}^k$ and the $d_i$ satisfy certain polynomial conditions. Thus $\st{B}_d$ is representable by a closed subscheme of $\A^{d^3+d} = \A\langle c_{ij}^k, d_i\rangle$.
\end{proof}
The scheme $B_d$ admits a natural $\Gl_d$ action, which is most easily described on the functor of points. A matrix $M \in \Gl_d(S)$ acts on $B_d(S)$ by
\begin{equation}\label{eq:gl_action}
  M \from (A, \tau) \mapsto (A, M \circ \tau).
\end{equation}

\begin{proposition}\label{thm:Ad}
  $\st{A}_d$ is equivalent to the quotient $[B_d/\Gl_d]$. 
\end{proposition}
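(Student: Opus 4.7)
The plan is to construct an equivalence of stacks by passing through the intermediate description of rank $d$ locally free $O_S$-algebras, which $\st{A}_d$ classifies intrinsically and which $[B_d/\Gl_d]$ classifies via frames. Recall that an object of $[B_d/\Gl_d]$ over $S$ is a pair $(P \to S, \alpha\from P \to B_d)$, where $P$ is a $\Gl_d$-torsor and $\alpha$ is $\Gl_d$-equivariant with respect to the action \eqref{eq:gl_action}.

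First I would define a functor $F\from \st{A}_d \to [B_d/\Gl_d]$. Given $(\phi\from X \to S) \in \st{A}_d(S)$, set $A = \phi_*O_X$, a locally free $O_S$-algebra of rank $d$. Let $P = \underline{\Isom}_{O_S}(O_S^{\oplus d}, A)$ be its frame bundle; this is a $\Gl_d$-torsor on $S$. On $P$ there is a tautological isomorphism $\tau\from A|_P \isom O_P^{\oplus d}$, so the pair $(A|_P, \tau)$ defines, via \autoref{thm:based_algebras}, a classifying morphism $\alpha\from P \to B_d$. The action \eqref{eq:gl_action} is set up exactly so that $\alpha$ is $\Gl_d$-equivariant.

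Next I would construct the quasi-inverse $G\from [B_d/\Gl_d] \to \st{A}_d$. Given $(P, \alpha)$, the map $\alpha$ endows $O_P^{\oplus d}$ with an $O_P$-algebra structure, and the $\Gl_d$-equivariance of $\alpha$ is precisely the cocycle condition needed for this structure to descend, via faithfully flat descent along the torsor $P \to S$, to an $O_S$-algebra structure on the associated vector bundle $E = P \times^{\Gl_d} O_S^{\oplus d}$. Then $\spec_S E \to S$ is the required object of $\st{A}_d(S)$. Checking $F \circ G \simeq \id$ and $G \circ F \simeq \id$ then reduces to the standard equivalence between $\Gl_d$-torsors on $S$ and rank $d$ vector bundles on $S$.

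The main obstacle is verifying that the $\Gl_d$-equivariance of $\alpha$ really is the descent datum needed to push the algebra structure from $P$ down to $S$. This is a direct calculation with the action \eqref{eq:gl_action} on the structure constants $(c_{ij}^k, d_i)$, but it is the one step that must be done carefully rather than invoked from general principles; everything else is formal.
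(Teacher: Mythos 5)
Your proposal is correct and follows essentially the same route as the paper: pass to $A=\phi_*O_X$, take its frame bundle $P=\Isom(A,O_S^{\oplus d})$ with the tautological trivialization to get an equivariant map $P\to B_d$, and observe this is an equivalence. The paper leaves the quasi-inverse as "easily seen," whereas you spell out the descent of the algebra structure along the torsor; that is a fleshed-out version of the same argument, not a different one.
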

\begin{proof}
  The proof is straightforward. Consider an object $\phi \from X \to S$ in $\st{A}_d(S)$. Let $A = \phi_* O_X$. Then $A$ is an $O_S$ algebra which is locally free of rank $d$ as an $O_S$ module. Set $P = \Isom_{O_S-\text{mod}}(A, O_S^{\oplus d})$. Then $\pi \from P \to S$ is a principal $\Gl_d$ bundle, and we have a tautological isomorphism
  \[ \tau \from \pi^* A \isom O_P^{\oplus d}.\]
  The data $(\pi^*A, \tau)$ gives a map $P \to B_d$, which is visibly $\Gl_d$ equivariant. The assignment
  \[ (\phi \from X \to S) \mapsto (\pi \from P \to S, P \to B_d)\]
  defines a morphism $\st{A}_d \to [B_d/\Gl_d]$ which is easily seen to be an isomorphism.
\end{proof}

Let $\phi \from \st X_d \to \st A_d$ be the universal object. Set $\st A = \phi_* O_{\st X_d}$ and $\st L = \dual{\det \sh A}$. We have the trace map $\tr \from \st A \to O_{\st A_d}$, which pre-composed with the multiplication $\st A \otimes \st A \to \st A$ yields a map $\st A \otimes \st A \to \st O_{\st A_d}$,
or equivalently a map $\st A \to \dual{\st A}$. Taking determinants and dualizing once more, we obtain a map 
\begin{equation}\label{eq:branch_section}
  \delta \from O_{\st A_d} \to \st L^{\otimes 2}.
\end{equation}
This is the familiar discriminant construction. Let $\st E_d \subset \st A_d$ be the maximal open substack over which $\phi$ is \'etale. The following are well-known:
\begin{compactenum}
  \item $\st E_d \subset \st A_d$ is the locus where $\delta$ is invertible;
  \item $\st E_d$ is equivalent to $B\s_d$, where $\s_d$ is the symmetric group on $d$ letters.
  \end{compactenum}
  We denote the zero locus of $\delta$ in $\st A_d$ by $\Sigma_d$ and call it the \emph{universal branch locus}. We call the ideal of $\Sigma_d \subset \st A_d$ the \emph{universal discriminant}. Given a map $\chi \from S \to \st A_d$, given by a cover $\phi \from X \to S$, we denote by $\br\phi$ the pullback $\chi^*\Sigma_d$ and call it the \emph{branch locus of $\phi$}.

\comment{
  
  \begin{proposition}[The Riemann--Hurwitz Formula]\label{thm:rh}
    Let $\phi \from X \to S$ be a finite map and $B \subset S$ the branch locus.
    \begin{compactenum}
    \item $B \subset S$ is the zero locus of a section $\delta$ of the line bundle $L = \left(\dual{\det \phi_*(O_X)}\right)^{\otimes 2}$ as in \eqref{eq:branch_section}.
    \item In the case when $X$ and $S$ are complete curves and $X \to S$ is generically \'etale on every component of $S$, we have
      \[ 2\rho_a(X) - 2  = d(2\rho_a(S)-2) + \deg(B).\]
    \end{compactenum}
  \end{proposition}
  \begin{proof}
    The first statement follows since $\Sigma_d \subset \st A_d$ is defined as the zero locus of $\delta$ as in \eqref{eq:branch_section}.
    
    For the second statement, we have
    \begin{align*}
      \deg B &= 2 \deg \phi_*(O_X)\\
      &= 2 \chi \phi_*O_X - d(\chi O_S)\\
      &= 2\rho_a(X)-2 - d(2\rho_a(S)-2).
    \end{align*}
  \end{proof}

}

\subsection{Orbinodal curves}\label{sec:orbinodal}
We recall the notion of an orbinodal curve as introduced by \citet*{av02}. Our brief exposition is based on the work of \citet*{olsson07:_log}. Orbinodal curves are called ``balanced twisted curves'' in \citep{av02} and ``twisted curves'' in \citep{olsson07:_log}. In short, a `pointed orbinodal curve' is a stacky modification of a pointed nodal curve at the nodes and at the marked points. \'Etale locally near a node, it has the form 
\[[\spec k[u,v]/uv] / \mu_n,\]
where $\mu_n$ acts by $u \mapsto \zeta u, \quad v \mapsto \zeta^{-1} v$. \'Etale locally near a marked point, it has the form
\[ [\spec k[u] / \mu_n],\]
where $\mu_n$ acts by $u \mapsto \zeta u$. The formal definition follows.
\begin{definition}
  Let $S$ be a scheme. We say that the data
  \[(\orb C \to C \to S; p_1,\dots, p_n: S \to C)\]
  is a \emph{pointed orbinodal curve} if the following are satisfied.
  \begin{compactenum}
  \item $C \to S$ is a nodal curve and $p_i \from S \to C$ pairwise disjoint sections.
  \item $\orb C \to S$ is a Deligne--Mumford stack with coarse space $\orb C \to C$. The coarse space map $\orb C \to C$ is an isomorphism over the open set $C^\gen \subset C$ which is the complement of the images of $p_i$ and the singular locus of $C \to S$.
  \item Let $c \to C$ be a geometric point lying over $s \to S$. If $c$ is a node of $C_s$, then there is an \'etale neighborhood $U \to C$ of $c$, an open set $T \subset S$ containing $s$, some $t \in O_T$, and $n \geq 1$ for which we have the following Cartesian diagram
    \[
    \begin{tikzpicture}
      \matrix(m)[matrix of math nodes, row sep=2em, column sep=3em]
      {
        \orb C \times_C U & U \\
        {[\spec O_T[u,v]/(uv-t) /\mu_n]} & \spec O_T[x,y]/(xy-t^n)\\
      };
      \path[map, ->]
      (m-1-1) edge  (m-1-2)
      (m-1-1) edge node [left] {\'etale} (m-2-1)
      (m-1-2) edge node [right] {\'etale} (m-2-2)
      (m-2-1) edge (m-2-2);
      \cartsquare{(m-1-1)}{(m-2-2)};
    \end{tikzpicture},
    \]
    Here $\mu_n$ acts by $u \mapsto \zeta u$ and $v \mapsto \zeta^{-1}v$, and the map on the bottom is given by $x \mapsto u^n$ and $y \mapsto v^n$.
  \item Let $s \to S$ be a geometric point and set $c = p_i(s)$. Then there is an \'etale neighborhood $U \to C$ of $c$ and $n \geq 1$ for which we have the Cartesian diagram
    \[
    \begin{tikzpicture}
      \matrix(m)[matrix of math nodes, row sep=2em, column sep=3em]
      {
        \orb C \times_C U & U \\
        {[\spec O_S[u]/\mu_n]} & \spec O_S[x]\\
      };
      \path[map, ->]
      (m-1-1) edge (m-1-2)
      (m-1-1) edge node [left] {\'etale} (m-2-1)
      (m-2-1) edge (m-2-2)
      (m-1-2) edge node [right] {\'etale} (m-2-2);
      \cartsquare{(m-1-1)}{(m-2-2)};
    \end{tikzpicture},
    \]
    Here $\mu_n$ acts by $u \mapsto \zeta u$, and the map on the bottom is given by $x \mapsto u^n$.
  \end{compactenum}
  We abbreviate $(\orb C \to C \to S; p_1,\dots, p_n: S \to C)$ by $(\orb C \to C; p)$. A \emph{morphism} between two pointed orbinodal curves $(\orb C_1 \to C_1; p_{1j})$ and $(\orb C_2 \to C_2; p_{2j})$ is a 1-morphism $F \from \orb C_1 \to \orb C_2$ such that the induced map $F \from C_1 \to C_2$ takes $p_{1j}$ to $p_{2j}$.
\end{definition}

Although the structure of $\orb C$ is specified for \emph{some} \'etale neighborhood, it holds for any sufficiently small neighborhood. The precise statement from \citep{olsson07:_log} follows.
\begin{proposition}\label{thm:orbinodal_structure}\citep[Proposition~2.2, Definition~2.3]{olsson07:_log}
  Let $(\orb C \to C; p)$ be a pointed orbinodal curve over $S$. For a geometric point $c \to C$, set 
  \[ \hens{\orb C} = \orb C \times_{C} \spec O_{C,c}.\]
  Let $s \to S$ be the image of $c \to C$.
  \begin{compactenum}
  \item Suppose $c$ is a node of $C_s$ and $t \in O_{S,s}$ and $x, y \in O_{C,c}$ are such that $O_{C,c}$ is isomorphic to the strict henselization of $O_{S,s}[x,y]/(xy-t^n)$ at the origin. Then, for some $n \geq 1$, we have
    \[ \hens{\orb C}\cong [\spec O_{C,c}[u,v]/(uv-t, u^n-x, v^n - y) / \mu_n],\]
    where $\mu_n$ acts by $u \mapsto \zeta u,\quad v \mapsto \zeta^{-1}v$.
  \item Suppose $c = p_i(s)$ and $x \in O_{C,c}$ is such that $O_{C,c}$ is isomorphic to the strict henselization of $O_{S,s}[x]$ at the origin. Then, for some $n \geq 1$, we have
    \[ \hens{\orb C}\cong [\spec O_{C,c}[u]/(u^n-x)/\mu_n],\]
    where $\mu_n$ acts by $u \mapsto \zeta u$.
  \end{compactenum}
\end{proposition}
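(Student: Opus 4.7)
The plan is to reduce the statement to the definition by showing that the orbinodal structure on the strict henselization is independent, up to $\mu_n$-invariant change of variables, of the choice of \'etale chart used to present it. Throughout, I will work at a fixed geometric point $c \to C$ lying over $s \to S$, and the crucial tool will be that in characteristic zero every unit of a strictly henselian local ring admits an $n$-th root.

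First, I would verify that the integer $n$ appearing in the local model is intrinsic to $\orb C$ at $c$: it equals the order of the stabilizer of (any geometric point of) $\orb C$ lying over $c$, or equivalently the degree of the coarse space map restricted to a punctured neighborhood on each branch. This makes $n$ independent of the chart and shows that the same $n$ must appear in any local description. Second, I would apply the definition to produce \emph{some} \'etale neighborhood $U_0 \to C$ of $c$ with coordinates $x_0, y_0 \in O_{U_0}$ satisfying $x_0 y_0 = t^n$, over which
\[
\orb C \times_C U_0 \cong [\spec O_{S,s}[u_0,v_0]/(u_0 v_0 - t, u_0^n - x_0, v_0^n - y_0)/\mu_n].
\]
Pulling this back along $\spec O_{C,c} \to U_0$ gives the desired presentation of $\hens{\orb C}$, but expressed in terms of the (possibly inconvenient) coordinates $x_0, y_0$ rather than an arbitrary given pair $x, y$.

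The main step is then to transport this presentation to arbitrary coordinates $x, y \in O_{C,c}$ with $xy = t^n$. Since $x$ and $x_0$ are both uniformizers of the same branch of the node in the strictly henselian local ring $O_{C,c}$ (after, if necessary, swapping $x$ and $y$), one has $x = f x_0$ and $y = f^{-1} y_0$ for a unit $f \in O_{C,c}^\times$; note $f$ is automatically $\mu_n$-invariant because it lies in the coarse ring. Using that $O_{C,c}$ is strictly henselian and $\mathrm{char}\, \k = 0$, choose a unit $\alpha$ with $\alpha^n = f$ and set $u = \alpha u_0$, $v = \alpha^{-1} v_0$. A direct check gives $uv = t$, $u^n = x$, $v^n = y$, and the $\mu_n$-action $u_0 \mapsto \zeta u_0$, $v_0 \mapsto \zeta^{-1} v_0$ translates (since $\alpha$ is $\mu_n$-invariant) to $u \mapsto \zeta u$, $v \mapsto \zeta^{-1} v$. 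This yields the required isomorphism in case (1). Case (2) is entirely parallel and a little easier: given $x \in O_{C,c}$ with the existing model in terms of $x_0$, write $x = f x_0$, extract $\alpha$ with $\alpha^n = f$, and set $u = \alpha u_0$.

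The main obstacle is the bookkeeping in the second step: one must verify that the coordinate change $u_0 \mapsto \alpha u_0$ is a genuine morphism of quotient stacks, which amounts to checking (a) that $\alpha$, being an element of the coarse ring, is indeed $\mu_n$-invariant as an element of the cover, and (b) that the new coordinates $u, v$ still satisfy the defining relation $uv = t$ rather than merely $uv = $ (unit) $\cdot t$. Both follow cleanly from the relation $fg = 1$ between the unit factors on $x$ and $y$, which is forced by the equation $xy = x_0 y_0 = t^n$ and the fact that $t$ is a nonzerodivisor. Once these compatibilities are in place, the proposition follows by direct substitution.
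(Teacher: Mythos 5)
The paper does not actually prove this statement: it is quoted verbatim from Olsson (\citep[Proposition~2.2]{olsson07:_log}) with no argument supplied, so there is no in-paper proof to compare against. Judged on its own terms, your overall strategy -- pull back the chart guaranteed by the definition to $\spec O_{C,c}$, observe that the base change yields $[\spec O_{C,c}[u_0,v_0]/(u_0v_0-t,\,u_0^n-x_0,\,v_0^n-y_0)/\mu_n]$ in the chart coordinates, and then transport to the given coordinates $x,y$ by extracting an $n$-th root of a unit in the strictly henselian ring $O_{C,c}$ (legitimate in characteristic zero) -- is sound and is essentially how one does establish Olsson's result. The intrinsic characterization of $n$ and the verification that $u=\alpha u_0$, $v=\alpha^{-1}v_0$ respect the relations and the $\mu_n$-action are all correct.

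The gap is the sentence you pass over in one breath: ``since $x$ and $x_0$ are both uniformizers of the same branch \ldots one has $x=fx_0$ and $y=f^{-1}y_0$ for a unit $f$.'' This is the technical heart of the proposition, not a triviality. On the closed fiber one can indeed show $\bar x=\bar f\,\bar x_0$ by a normal-form computation in $k\f{x_0,y_0}/(x_0y_0)$, but lifting this to the statement that $x$ is a \emph{unit multiple} of $x_0$ in $O_{C,c}$ (rather than $x = fx_0 + (\text{terms involving }y_0\text{ and }\mathfrak m_s)$) requires an argument -- it is the uniqueness of the formal branches of a deforming node, and you should either prove it (e.g.\ by a normal-form computation in the completion $\compl{O}_{S,s}\f{x_0,y_0}/(x_0y_0-t^n)$, writing every element as $a_0+\sum a_ix_0^i+\sum b_jy_0^j$ and analyzing the equation $xy=t^n$) or cite it. Secondly, your deduction $fg=1$ from $(fg-1)t^n=0$ uses that $t^n$ is a nonzerodivisor in $O_{C,c}$; this fails when $t$ is nilpotent or a zerodivisor in $O_{S,s}$ (e.g.\ a non-smoothed node, $t=0$), and in that degenerate case you must argue separately that $\alpha$ and $\beta$ with $\alpha^n=f$, $\beta^n=g$ can be chosen so that $\alpha\beta u_0v_0=t$ still holds. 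Neither point is fatal, but as written the proof assumes exactly the rigidity statement that makes the proposition nontrivial.
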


\section{The Big Hurwitz Stack $\st H^d$}\label{sec:main}
Fix a positive integer $d$. The goal of this section is to define the big Hurwitz stack $\st H^d$. We first define the stack $\st M$ of divisorially marked, pointed nodal curves. 

\begin{definition}\label{def:Mh}
  Define the stack $\st M$ of divisorially marked, pointed nodal curves as  the category  fibered over $\cat{Schemes}$ whose objects over $S$ are 
  \[ \st M(S) = \{ (P \to S; \Sigma; \sigma_1, \dots, \sigma_n) \},\]
  where
\begin{compactenum}
\item $P$ is an algebraic space and $P \to S$ a connected nodal curve;
\item $\Sigma \subset P$ is a Cartier divisor, flat over $S$, lying in the smooth locus of $P \to S$;
\item $\sigma_j \from S \to P$ are pairwise disjoint sections lying in the smooth locus of $P \to S$ and away from $\Sigma$. 
\end{compactenum}
\end{definition}

\begin{restatable}{proposition}{thmmhb}
  \label{thm:mhb}
  $\st M$ is a smooth algebraic stack, locally of finite type.
\end{restatable}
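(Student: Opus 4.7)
The plan is to build $\st M$ in stages from the well-known stack of connected nodal curves, with each stage a relatively smooth, representable, and algebraic construction. Since the number $n$ of sections is part of the data, I would work component by component, writing $\st M = \bigsqcup_{n \ge 0} \st M_n$ and fixing $n$ throughout.

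Let $\mathcal N$ denote the stack whose $S$-points are connected nodal curves $P \to S$ (with no other decoration). This is a smooth algebraic stack, locally of finite type, a fact I would simply cite from standard references on the deformation theory of nodal curves. Let $\pi \from \mathcal C \to \mathcal N$ be the universal curve and $\mathcal C^\sm \subset \mathcal C$ the open substack on which $\pi$ is smooth, so that $\mathcal C^\sm \to \mathcal N$ is representable and smooth of pure relative dimension $1$. Specifying $n$ sections $\sigma_1, \dots, \sigma_n$ of $P \to S$ landing in the smooth locus amounts to giving a map $S \to (\mathcal C^\sm)^n_{/\mathcal N}$, the $n$-fold fiber product over $\mathcal N$; imposing pairwise disjointness cuts out the open substack $\mathcal N_n \subset (\mathcal C^\sm)^n_{/\mathcal N}$, which is then a smooth algebraic stack, locally of finite type.

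With the sections in place, I would add the divisor. Over $\mathcal N_n$ let $\mathcal C_n \to \mathcal N_n$ be the universal curve, $\mathcal C_n^\sm$ its relative smooth locus, and $U \subset \mathcal C_n^\sm$ the open complement of the $n$ universal sections. The remaining datum $\Sigma$ is an effective Cartier divisor of $\mathcal C_n$, flat over $S$, contained in $U$. Because $U \to \mathcal N_n$ is smooth of pure relative dimension $1$, any such $\Sigma$ is automatically finite flat over $S$, hence is classified by the relative Hilbert scheme of points $\bigsqcup_{b \ge 0} \Hilb^b(U/\mathcal N_n)$. For a smooth relative curve, this Hilbert scheme is canonically the symmetric power $\Sym^b(U/\mathcal N_n)$, each component of which is a smooth algebraic space of relative dimension $b$ over $\mathcal N_n$. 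Assembling, $\st M_n = \bigsqcup_{b \ge 0} \Sym^b(U/\mathcal N_n)$ is a smooth algebraic stack, locally of finite type, and the disjoint union over $n$ is $\st M$.

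The only inputs that require nontrivial justification are the smoothness and algebraicity of the stack $\mathcal N$ of nodal curves and the identification $\Hilb^b \cong \Sym^b$ for a smooth relative curve of pure relative dimension $1$, both of which are standard. I therefore do not expect a serious obstacle: the remaining content is the observation that each of the relevant loci (being in the smooth locus, being pairwise disjoint, being disjoint from $\Sigma$) is open, and that relative fiber products and Hilbert schemes of points of smooth relative curves preserve smoothness and the property of being locally of finite type.
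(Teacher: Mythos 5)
Your proof is correct and follows essentially the same route as the paper: reduce to the stack of bare connected nodal curves and then realize the sections and the divisor as a smooth, representable layer on top. The only difference is one of emphasis --- the paper spells out the algebraicity of the undecorated stack (diagonal via $\Isom$-sheaves using \'etale-local projectivity, atlas via Deligne--Mumford stable curves) and merely asserts that the forgetful map from $\st M^{b,n}$ is representable and smooth, whereas you cite the former as standard and make the latter explicit via fiber powers of the smooth locus and $\Sym^b$; just make sure the reference you invoke for the nodal-curve stack allows algebraic-space total spaces, as the paper notes this is needed for descent.
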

\begin{proof}
  Postponed to \autoref{sec:proofs}.
\end{proof}

We now define $\st H^d$. Recall our notation from \autoref{sec:Ad}:
\begin{align*}
  \st A_d &\text{ is the classifying stack of schemes of length $d$;}\\
  \st X_d \to \st A_d &\text{ is the universal scheme of length $d$;}\\
  \Sigma_d \subset \st A_d &\text{ is the universal branch locus;}\\
  \st E_d = \st A_d \setminus \Sigma_d &\text{ is the locus of \'etale covers.}
\end{align*}
\begin{definition}\label{def:big_hurwitz}
  Define the \emph{big Hurwitz stack} $\st H^d$ as the category fibered over $\cat{Schemes}$ whose objects over $S$ are
\begin{equation}
  \st H^d(S) = \{(\orb P \to P \to S;\sigma_1,\dots, \sigma_n; \chi \from \orb P \to \st A_d) \}, 
\end{equation}
where
\begin{compactenum}
\item $(\orb P \to P \to S; \sigma_1,\dots, \sigma_n)$ is a pointed orbinodal curve;
\item\label{part:representability}
  $\chi \from \orb P \to \st A_d$ is a representable morphism that maps the following to $\st E_d$: the generic points of the components of $\orb P_s$, the nodes of $\orb P_s$, and the preimages of the marked points in $\orb P_s$, for every fiber $\orb P_s$ of $\orb P \to S$.
\end{compactenum}

A morphism between $(\orb P_1 \to P_1 \to S_1; \{\sigma_{1j}\}; \chi_1 \from \orb P_1 \to \st A_d)$ and $(\orb P_2 \to P_2 \to S_2; \{\sigma_{2j}\}; \chi_2 \from \orb P_2 \to \st A_d)$ over a morphism $S_1 \to S_2$ consists of two pieces of data: $(F, \alpha)$, where
\begin{compactenum}
\item $F$ is a morphism of pointed orbinodal curves: $F \from \orb P_1 \to \orb P_2$, and
\item $\alpha$ is a 2-morphism: $\alpha \from \chi_1 \to \chi_2 \circ F$,
\end{compactenum}
such that $(F, \alpha)$ fits in a Cartesian diagram 
\begin{equation}\label{eq:Hdg_morphisms}
  \begin{tikzpicture}[math, node distance=4em, baseline=(P1.base)]
    \node (S1) {S_1}; \node (S2) [right of=S1] {S_2};
    \node (P1) [above of=S1] {\orb P_1}; 
    \node (P2) [right of=P1] {\orb P_2};
    \node (Ad) [above right of=P2] {\st A_d}; 
    \path[->, map] 
    (P1) edge node [auto] {F} (P2) 
    (P1) edge (S1)
    (S1) edge (S2) 
    (P2) edge (S2)
    (P1) edge [bend left=40] node (Chi1) [left] {\chi_1} (Ad)
    (P2) edge [bend right=20] node (Chi2) [right] {\chi_2} (Ad)
    (Chi1) edge [dashed,->, shorten <=.6em, shorten >=.6em] 
    node[auto]{\alpha} (Chi2);
    \cartsquare{(P1)}{(S2)};
  \end{tikzpicture}
\end{equation}
We abbreviate $(\orb P \to P \to S; \sigma_1,\dots, \sigma_n; \chi \from \orb P \to \st A_d)$ by $(\orb P \to P; \sigma; \chi)$.
\end{definition}

\begin{remark}
  The careful reader may wonder what happened to the $2$-morphisms between the $1$-morphisms from $\orb P_1$ to $\orb P_2$. After all, the objects of $\st H^d$ involve stacks, which makes it, \emph{a priori}, a $2$-category. However, by \citep[Lemma~4.2.3]{av02}, the $2$-automorphism group of any $1$-morphism $\orb P_1 \to \orb P_2$ is trivial. Thus, $\st H^d$ is equivalent to a $1$-category \citep[Proposition~4.2.2]{av02}. What this means explicitly is that we treat two morphisms given by $(F, \alpha)$ and $(F',\alpha')$ as \emph{the same} if they are related by a $2$-morphism between $F$ and $F'$.
\end{remark}
\begin{remark}
  Let us explain the condition of representability of $\chi$ (\autoref{def:big_hurwitz}~\eqref{part:representability}). A morphism between two Deligne--Mumford stacks $F \from \st X \to \st Y$ is representable if and only if for every geometric point $x \to \st X$, the induced map of automorphism groups $\Aut_x(\sh X) \to \Aut_{F(x)}(\sh Y)$ is injective \citep[Lemma~4.4.3]{av02}. Thus the representability of $\chi$ means that the stack structure on $\orb P$ is the minimal one that affords a morphism to $\st A_d$.
\end{remark}
\begin{remark}
  Let us explain the role played by the orbinodes. Consider a local piece of an orbinodal curve near a node; say $\orb U = [\spec\left(k[u,v]/uv\right) / \mu_n]$ and an \'etale cover $\orb C \to \orb U$. Observe that the induced map on the coarse spaces $C \to U$ is precisely an admissible cover in the sense of \citet{Harris82:_Kodair_Dimen_Of_Modul_Space_Of_Curves}. In this way, the orbinodes provide a way to deal with the admissibility condition.
\end{remark}
\begin{remark}\label{rem:marked_points}
  Let us explain the role played by the marked points. Consider a local piece of an orbinodal curve near a marked point; say $\orb U = [\spec k[u]/\mu_n]$. The morphism $\chi$ maps such a piece into $\st E_d \cong B\s_d$, corresponding to an \'etale cover $\orb C \to \orb U$. Note that in contrast to the fundamental group of a small piece of a schematic curve, the fundamental group of the stacky curve $\orb U$ is not trivial; it is precisely $\mu_n$. Thus, $\orb C \to \orb U$ may be a non-trivial \'etale cover, specified by the monodromy
  \[ \Aut_0(\orb U) = \mu_n \to \Aut_0(B\s_d) = \s_d.\]
  The condition of representability implies that this monodromy map is injective. On the level of coarse spaces, we thus get a cover $C \to U$ with monodromy around $0$ given by an element of order $n$ in $\s_d$. By taking the open and closed substack of $\st H^d$ where $\Aut_{\sigma_i}(\orb P)$ has order $n$, we in effect impose the condition that the monodromy of $\orb C \to \orb P$ around $\sigma_i$ is a permutation $\pi \in \s_d$ of order exactly $n$. By further restricting to the open and closed substack where $\pi$ has a specific cycle structure, we can fully prescribe the monodromy. In this way, we can get moduli spaces of covers with marked fibers of prescribed ramification type.
\end{remark}

It is useful to have a formulation of $\st H^d$ purely in terms of finite covers. Since a map to $\st A_d$ is nothing but a finite cover of degree $d$, we see that $\st H^d$ may be equivalently described as the category whose objects over a scheme $S$ are

\begin{equation}
  \{(\orb P \to P \to S;  \sigma_1, \dots, \sigma_n; \phi \from \orb C \to \orb P)\},
\end{equation}
where
\begin{compactenum}
\item $(\orb P \to P \to S; \sigma_1, \dots, \sigma_n)$ is a pointed orbinodal curve;
\item $\phi$ is a finite cover of degree $d$, \'etale over the following: the generic points of the components of $\orb P_s$, the nodes of $\orb P_s$, and the preimages of the marked points in $\orb P_s$, for every fiber $\orb P_s$ of $\orb P \to S$.
\item Furthermore, the following condition is satisfied: for every open subset $\orb U \subset \orb P \setminus \br\phi$, the morphism $\orb U \to B\s_d$ corresponding to the \'etale cover $\orb C|_{\orb U} \to \orb U$ is representable.
\end{compactenum}

In this form, a morphism from  $(\orb P_1 \to P_1 \to S_1; \sigma_{1j}; \phi_1 \from \orb C_1 \to \orb P_1)$ to $(\orb P_2 \to P_2 \to S_2; \sigma_{2j}; \phi_2 \from \orb C_2 \to \orb P_1)$ is given by $(F, G)$ where $F \from \orb P_1 \to \orb P_2$ is a morphism of pointed orbinodal curves and $G \from \orb C_1 \to \orb C_2$ a morphism over $F$ such that there is a Cartesian diagram
\[
\begin{tikzpicture}[math, node distance=4em, baseline=(P1.base)]
  \node (S1) {S_1}; \node (S2) [right of=S1] {S_2};
  \node (P1) [above of=S1] {\orb P_1}; 
  \node (P2) [right of=P1] {\orb P_2};
  \node (C1) [above of=P1] {\orb C_1}; 
  \node (C2) [above of=P2] {\orb C_2}; 
  \path[->, map] 
  (P1) edge node [auto] {F} (P2) 
  (C1) edge node [auto] {G} (C2)
  (P1) edge (S1)
  (P2) edge (S2)
  (C1) edge (P1)
  (C2) edge (P2)
  (S1) edge (S2) 
  ;
  \cartsquare{(P1)}{(S2)};
  \cartsquare{(C1)}{(P2)};
\end{tikzpicture}.
\]
We abbreviate $(\orb P \to P \to S; \sigma_1, \dots, \sigma_n; \phi \from \orb C \to \orb P)$ by $(\orb P \to P; \sigma; \phi)$. We use the formulation of $\st H^d$ in terms of maps to $\st A_d$ or in terms of finite covers depending on  whichever is convenient.

The two stacks $\st H^d$ and $\st M$ are related by the branch morphism, which we now define.  Consider an object $(\orb P \to P \to S; \sigma_1, \dots, \sigma_n; \phi \from \orb C \to \orb P)$ in $\st H^d(S)$. Identify $\br\phi$ with its image in $P$ (it is anyway disjoint from the stacky points of $\orb P$). Then $\br\phi \subset P$ is an $S$-flat Cartier divisor.  The branch morphism $\br \from \st H^d \to \st M$ is defined by 
  \[ \br \from (\orb P \to P \to S; \sigma_1, \dots, \sigma_n; \phi \from \orb  C \to \orb P) \mapsto (P \to S; \br\phi; \sigma_1, \dots, \sigma_n).\]

\begin{restatable}[Main]{theorem}{thmbighurwitz}
  \label{thm:big_hurwitz}
  $\st H^d$ is an algebraic stack, locally of finite type. The morphism 
  \[\br \from \st H^d \to \st M\]
  is proper and representble by Deligne--Mumford stacks.
\end{restatable}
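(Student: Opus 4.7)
The plan is to deduce algebraicity by realizing $\st H^d$ as a mapping stack and then handle properness and DM-representability by a valuative-criterion argument modelled on admissible covers.

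\textbf{Algebraicity.} I would first dispose of $\st M$ by a direct argument: the stack of pointed nodal curves is known to be algebraic and smooth (locally of finite type), and $\st M$ is obtained from it by adjoining a relative effective Cartier divisor $\Sigma$ supported in the smooth locus and disjoint from the sections. The relative Hilbert scheme / Weil divisor functor on a nodal curve is well-behaved away from the nodes, so this only adds a smooth affine bundle factor locally, keeping the whole stack algebraic and smooth. For $\st H^d$, first construct the stack $\st M^{\orb}$ of pointed orbinodal curves over $\st M$ (or over the bare nodal-curve stack); its algebraicity is in \citep{olsson07:_log}, and over any orbinodal curve $\orb P \to S$ the stack parametrizing representable maps $\chi \from \orb P \to \st A_d = [B_d/\Gl_d]$ is the Hom-stack. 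The non-DM nature of $\st A_d$ is absorbed by the fact that such a $\chi$ is exactly a $\Gl_d$-torsor $Q \to \orb P$ plus a $\Gl_d$-equivariant morphism $Q \to B_d$. The stack of $\Gl_d$-torsors on $\orb P$ is algebraic (classifying maps $\orb P \to B\Gl_d$ with target smooth Artin), and once such a torsor is fixed, equivariant morphisms into the affine scheme $B_d$ form an affine scheme of finite type over the base. The representability condition (on generic points, nodes, and marked-point stabilizers) cuts out an open substack. This is the Abramovich--Vistoli construction \citep{av02} with the target $\st A_d$ replacing a DM stack, and it goes through because $\st A_d$ is the quotient of an affine by a linear group.

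\textbf{DM-representability of $\br$.} Since $2$-automorphisms of $1$-morphisms $\orb P_1 \to \orb P_2$ are trivial by \citep[Lemma~4.2.3]{av02}, the only issue is to show that $1$-automorphisms are finite and unramified. A $\br$-fibre automorphism must fix $(P, \br\phi, \sigma_\bullet)$ and hence is an automorphism of the orbinodal structure $\orb P \to P$ compatible with $\chi$; once the orbinodal structure is pinned down (and it is, up to finite ambiguity determined by the $\mu_n$'s at nodes and marked points), the automorphism of $\phi$ reduces to an automorphism of the cover $\orb C \to \orb P$ over the base, which is finite and unramified in characteristic zero by standard deck-transformation arguments on a finite étale cover over each generic fibre component.

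\textbf{Properness of $\br$.} This is where the real work lies, and I would devote the bulk of the proof to the valuative criterion. Let $R$ be a DVR with fraction field $K$ and residue field $k$; suppose we are given $(P \to \spec R; \Sigma; \sigma_\bullet) \in \st M(R)$ and a lift $(\orb P_K \to P_K; \sigma_{\bullet,K}; \phi_K \from \orb C_K \to \orb P_K) \in \st H^d(K)$ of its restriction. After an allowed finite base change $R \to R'$, the plan is to: (i) locate the points of the central fibre $P_k$ where $\Sigma$ becomes non-reduced or where $P_k$ acquires a node; (ii) at each such point, declare a cyclotomic stabilizer $\mu_n$ of the correct order, where $n$ is the order of the monodromy of $\phi_K$ around the corresponding divisor in the generic fibre (at a point of $\Sigma$) or the order matching the two branches at a node, producing an orbinodal model $\orb P \to P$ over $\spec R$; (iii) take the normalization of $\orb P$ in the function field of $\orb C_K$. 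The point of the orbinodal structure is exactly that once the local monodromy around each colliding cluster of branch points has been absorbed into the stabilizer, the cover becomes étale in a neighborhood of the singular locus of $\orb P_k$, so the normalization is finite flat of degree $d$ and gives the desired extension $\orb C \to \orb P$. Uniqueness (separatedness) then follows because the orbinodal structure is determined by the monodromy data, and the extension of a finite cover by normalization is canonical. The core technical obstacle is (ii)--(iii): showing that the local monodromy data around a cluster of branch points, possibly colliding with nodes, is well-defined after a finite base change, and that the cyclotomic stabilizers chosen in this way actually produce an orbinodal curve on the nose (rather than something weaker) over which the normalized cover is étale off the branch locus and representable. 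I expect this reduces by \autoref{thm:orbinodal_structure} to an étale-local computation around each singular point of the central fibre, carried out by the explicit $[\spec R[u,v]/(uv-t)/\mu_n]$ model, together with a ramification-theoretic choice of $n$.

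Finally, the universally closed plus separated plus locally of finite type conclusion, together with the DM-representability established above, gives that $\br$ is proper and DM-representable, proving the theorem.
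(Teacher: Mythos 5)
Your algebraicity and DM-representability sketches are essentially the paper's route in different clothing: the paper factors $\st H^d \to \st FinCov^d \to \st Vect^d \to \st M^\orbi$, and a rank-$d$ bundle on $\orb P$ \emph{is} a $\Gl_d$-torsor while an algebra structure \emph{is} a $\Gl_d$-equivariant map to the affine scheme $B_d$ (realized in the paper as closed subschemes of $Sect$-schemes). The one thing you gloss over there is why the stack of bundles on an orbinodal curve is algebraic and why the relevant piece is of \emph{finite type}; the paper does this via Quot schemes on DM stacks with generating sheaves and a separate boundedness argument, precisely because the general Hom-stack results do not give the finiteness needed to run the valuative criterion with DVRs. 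You never address quasi-compactness of $\br$ at all.

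The genuine gap is in the properness argument, specifically your steps (ii)--(iii). First, you cannot ``declare a cyclotomic stabilizer'' at a point of the central fibre where branch points collide: by definition of $\st H^d$ the orbinodal structure lives only at nodes and marked points, and $\chi$ must send those into the \'etale locus $\st E_d$, so the support of $\Sigma$ is disjoint from all stacky points. The stabilizers absorb monodromy only at nodes (the admissibility condition) and at marked points; they do not and cannot make the cover \'etale near a cluster of colliding branch points. At such a point the limit cover genuinely acquires a singularity (possibly non-Gorenstein, cf.\ \autoref{ex:singularities}), and no stacky modification is allowed or helpful. Second, normalization gives the wrong extension whenever $\orb C_K$ is itself non-normal, which the definition of $\st H^d$ explicitly permits: e.g.\ for the family $y^2 = x^2(x-s)$ over $R = k\f{s}$ with $\Sigma$ the closure of $\{x^2(x-s)=0\}$, the normalization of $P$ in the function field is the smooth double cover $w^2 = x - s$ with branch divisor $2[x{=}s] \neq \Sigma$, whereas the correct limit is the flat limit $y^2 = x^3$ (a cuspidal cover with branch divisor $x^3 = \Sigma|_0$). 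Normalization systematically discards exactly the ``crimping'' data that the fixed branch divisor is supposed to pin down. The paper's actual mechanism for extending $\chi$ across the finitely many smooth, non-stacky points of $\orb P|_0$ is Step 4 of \autoref{thm:proper}: write $\chi$ on the punctured local surface $P_p^\circ$ as a $\Gl_d$-torsor plus an equivariant map to the affine scheme $B_d$, invoke Horrocks' theorem (\autoref{thm:vb_on_punctured_surface}) to trivialize the torsor, and extend the map by Hartogs; the equality $\chi^*\Sigma_d = \Sigma$ then holds because two horizontal divisors agreeing off a codimension-two set agree. This is the new idea your proposal is missing, and your separatedness claim (``the extension by normalization is canonical'') inherits the same defect; the paper instead proves separatedness by extending the isomorphism of algebras over $P^\gen$ via Hartogs and handling nodes and marked points by the explicit local models.
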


\autoref{thm:big_hurwitz} is motivated by the treatment of Hurwitz spaces as spaces of maps into a suitable stack by \citet*{acv:03}, relying on the work of \citet*{av02}. The proof of the main theorem in \citep{av02} is quite involved. However, thanks to the advancement of technology related to stacks, we can give a relatively short and conceptual proof. There is a very general result for the existence of $\Hom$ stacks due to \citet*{aoki05:_hom}, but it is not suitable for our purpose because it does not yield the required finiteness properties. 

  A generalization of \autoref{thm:big_hurwitz} where $\st A_d$ is replaced by a suitable global quotient $[U/G]$ seems plausible. This would also generalize the construction by \citet*{ciocan-fontanine11:_stabl_git}. However, this is beyond the scope of the present work.

\section{Proof of the Main Theorem}\label{sec:proofs}
This section is devoted to proving \autoref{thm:big_hurwitz}. The proof is broken down into parts.

\subsection{That $\st M$ is a smooth algebraic stack, locally of finite type}\label{sec:Mh}
This result is essentially \citep[Lemma~5.1]{olsson07:_log}. We sketch a proof for completeness.

\begin{lemma}\label{thm:etale_local_proj}
\citep[Proposition~2.1]{hall10:_modul_singul_curves}
Let $\pi \from P \to S$ be a nodal curve, where $P$ is an algebraic space and $S$ a scheme. Let $s \to S$ be a geometric point. Then there is an \'etale neighborhood $T \to S$ of $s$ such that $\pi_T \from P_T \to T$ is projective.
\end{lemma}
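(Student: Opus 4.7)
The goal is to produce a relatively ample line bundle on $P_T \to T$ for some étale neighborhood $T \to S$ of $s$, after which the standard fact that a proper morphism with a relatively ample line bundle is projective finishes the argument. The plan has the following steps.

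\textbf{Step 1 (Reduction).} Working étale-locally on $S$ around $s$, I may assume $S$ is Noetherian and $s$ a closed geometric point. The fiber $P_s$ is a proper algebraic space of dimension $1$ over an algebraically closed field; by Knutson's theorem that proper one-dimensional algebraic spaces are schemes, $P_s$ is in fact a projective nodal curve. Choose an ample line bundle $L_s$ on $P_s$ (for example, $O_{P_s}(D)$ where $D$ is a divisor supported on sufficiently many smooth points distributed across the components).

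\textbf{Step 2 (Formal lift of the line bundle).} The obstructions to deforming a line bundle along a square-zero thickening live in $H^2(P_s, O_{P_s})$, which vanishes because $P_s$ is one-dimensional. Hence $L_s$ lifts compatibly to line bundles on all infinitesimal neighborhoods $P \times_S \spec\bigl(O_{S,s}/\mathfrak{m}_s^{n}\bigr)$, yielding a line bundle $\widehat L$ on the formal neighborhood $P \times_S \spec \widehat O_{S,s}$.

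\textbf{Step 3 (Algebraic lift on an étale neighborhood).} Promote $\widehat L$ to an honest line bundle on $P_T$ for some étale neighborhood $T \to S$ of $s$. The cleanest way is to note that the relative Picard stack $\underline{\Pic}_{P/S}$ is an algebraic stack, locally of finite type, and the vanishing of $H^2(P_s, O_{P_s})$ shows it is smooth at the point $[L_s]$; thus the formal section $\widehat L$ of $\underline{\Pic}_{P/S}$ over $\spec\widehat O_{S,s}$ is approximated by an actual section over some étale neighborhood $T \to S$ of $s$ (Artin approximation). This yields the desired line bundle $L$ on $P_T$ restricting to $L_s$ on the special fiber.

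\textbf{Step 4 (Openness of ampleness and conclusion).} Since $\pi \from P \to S$ is proper and flat and $L|_{P_s}$ is ample, relative ampleness is an open condition on the base, so after further shrinking $T$ the line bundle $L$ is $\pi_T$-ample. A proper morphism of algebraic spaces carrying a relatively ample line bundle is projective (and in particular its source is a scheme), so $\pi_T \from P_T \to T$ is projective as required.

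The main obstacle is Step 3: passing from the formal lift to an algebraic lift on an étale neighborhood in the algebraic-space setting. Everything else is either a standard openness statement or a cohomological vanishing that is essentially dimensional. Once the smoothness of $\underline{\Pic}_{P/S}$ at $[L_s]$ is in hand, Artin approximation takes care of this step, and the remainder of the argument is formal.
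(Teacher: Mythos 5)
Your argument is correct, but it takes a genuinely different route from the paper. The paper never deforms the line bundle at all: it picks smooth points $x_1,\dots,x_n$ of $P_s$ whose sum is an ample divisor, uses the fact that a smooth morphism admits sections through any given point of a fiber after an \'etale base change to extend each $x_i$ to a section $\sigma_i$ of $\pi_T$ landing in the smooth locus, and then takes the Cartier divisor $\sigma_1(T)+\dots+\sigma_n(T)$; openness of relative ampleness finishes it. Your proof replaces this with the deformation-theoretic package: vanishing of $H^2(P_s,O_{P_s})$, algebraicity and smoothness of the Picard stack $\underline{\Pic}_{P/S}$ at $[L_s]$, and Artin approximation to algebraize the formal lift. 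Both are sound. The trade-off is that the paper's argument is elementary and self-contained (the only input is the existence of \'etale-local sections of smooth morphisms), whereas yours leans on the algebraicity of the Picard stack for a proper flat family of algebraic spaces, which is itself a substantial theorem of Artin. On the other hand, your method is insensitive to how the ample class is represented and would apply verbatim to any proper flat family of curves, not just those whose fibers have enough smooth points to support an ample divisor. One small streamlining: your Steps 2 and 3 can be collapsed, since smoothness of $\underline{\Pic}_{P/S}\to S$ at $[L_s]$ already produces a section over an \'etale neighborhood of $s$ directly, without passing through the formal lift and Artin approximation separately.
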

\begin{proof}
  Pick points $x_1, \dots, x_n \in P_s$ in the smooth locus such that the Cartier divisor $x_1 + \dots + x_n$ is ample on $P_s$. Since $P \to S$ is smooth along the chosen points $x_i$, there is an \'etale neighborhood $T \to S$ of $s$ such that each $x_i$ extends to a section $\sigma_i$ of $\pi_T \from P_T \to T$. By passing to a Zariski open, if necessary, assume that the sections map to the smooth locus of $\pi_T$. Then the divisor $\sigma_1(T) + \dots + \sigma_n(T)$ is a Cartier divisor on $P_T$ which is ample on the fiber over $s$. Again, by passing to a Zariski open, if necessary, we get a $\pi_T$-ample divisor. Hence $\pi_T$ is projective.
\end{proof}
We are ready to prove \autoref{thm:mhb}, which we recall for convenience.
\thmmhb*
\begin{proof}
  Let $\st M^{b, n} \subset \st M$ be the subcategory where the degree of the marked divisor is $b$ and the number of marked points is $n$. It suffices to prove that $\st M^{b, n}$ is an algebraic stack, locally of finite type. For brevity, set $\st U = \st M^{0,0}$.

Clearly, the obvious forgetful morphism $\st M^{b,n} \to \st U$ is representable by smooth algebraic spaces of finite type. Hence, it suffices to prove that $\st U$ is an algebraic stack, locally of finite type.
  
That $\st U$ is a stack over $\cat{Schemes}$ follows from standard descent theory; it will not be reproduced here. Note, however, that it is important to allow algebraic spaces (and not merely schemes) $P \to S$ in the definition of $\st M$.

We now prove that $\st U$ is algebraic. Recall that this means the following two conditions:
  \begin{compactenum}
  \item\label{diagonal} the diagonal $\st U \to \st U \times \st U$ is representable by separated algebraic spaces of finite type;
  \item\label{atlas} $\st U$ admits a smooth, surjective morphism from a scheme, locally of finite type.
  \end{compactenum}
  For \eqref{diagonal}, we must check that given two objects $P_i \to S$ of $\st U$, for $i = 1, 2$, the sheaf $\Isom_S(P_1, P_2)$ on $S$ is representable by a separated algebraic space of finite type. It suffices to check this \'etale locally on $S$. Also, this is well-known if $P_i \to S$ are projective. Thanks to \autoref{thm:etale_local_proj}, the general case follows.
  
  For \eqref{atlas}, it suffices to exhibit a smooth, surjective map to $\st U$ from an algebraic stack, which is itself locally of finite type. Denote by $\st M_{g,k}^{\DM}$ the stack of Deligne--Mumford stable curves of genus $g$ with $k$ marked points. This is a smooth algebraic stack of finite type. The forgetful morphism $\st M_{g,k}^{\DM} \to \st U$ is easily seen to be smooth, and the morphism from the disjoint union
  \[ \bigsqcup_{k \geq 0, g \geq 0} \st M_{g,k}^{\DM} \to \st U \]
  is surjective. We conclude \eqref{atlas}. Finally, the smoothness of $\st U$ follows from the smoothness of $\st M_{g,k}^{\DM}$.
\end{proof}

\subsection{That $\br \from \st H^d \to \st M$ is an algebraic stack, locally of finite type}\label{sec:algebraic}

The overall strategy is to work our way up from $\st M$ to $\st H^d$ via a series of intermediate algebraic stacks. We first introduce some covenient notation. Denote by $\st M^{b,*}$ (resp.\ $\st M^{*,n}$, $\st M^{b,n}$) the open substack of $\st M$ where the marked divisor has degree $b$ (resp.\ there are $n$ marked points, degree $b$ and $n$ marked points).

The first intermediate step is the stack of pointed orbinodal curves. Let $\st M^{\orbi}$ be the category over $\cat{Schemes}$ whose objects over $S$ are pointed orbinodal curves $(\orb P \to P \to S; \sigma)$. Denote by $\st M^{\orbi\leq N}$ the subcategory of $\st M^\orbi$ where the order of the automorphism groups at the points of the orbinodal curve is bounded above by $N$. There is a morphism $\st M^{\orbi} \to \st M^{0,*}$ given by
\[ (\orb P \to P \to S; \sigma)  \to (P \to S; \sigma).\]
We quote, without proof, a theorem of \citet{olsson07:_log}.
\begin{theorem}\label{thm:orbinodal_stack}\citep[Theorem~1.9, Corollary~1.11]{olsson07:_log}
  $\st M^\orbi$ and $\st M^{\orbi\leq N}$ are smooth algebraic stacks, locally of finite type. $\st M^{\orbi\leq N}$ is an open substack of $\st M^\orbi$. The morphism $\st M^{\orbi\leq N} \to \st M^{0,*}$ is representable by Deligne--Mumford stacks of finite type.
\end{theorem}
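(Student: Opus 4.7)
The plan is to realize $\st M^\orbi$ as built out of $\st M^{0,*}$ by iterated root stacks along nodes and marked sections, with the twisting integers indexing the strata. Once one has this description, algebraicity, smoothness, finite type of $\st M^{\orbi\leq N}\to\st M^{0,*}$, and Deligne--Mumford representability all follow from standard properties of root stacks, combined with the fact (\autoref{thm:mhb}) that $\st M^{0,*}$ is already a smooth algebraic stack, locally of finite type.

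I first stratify $\st M^\orbi$ by twisting type. Given $(\orb P\to P\to S;\sigma)$, at each node of a geometric fiber and each marked section there is a canonical positive integer recording the order of the stabilizer group of $\orb P$ at that point. These integers are locally constant in $S$ after an \'etale base change splitting the singular locus and labelling the sections, so the resulting twisting type gives a decomposition of $\st M^\orbi$ into open and closed substacks $\st M^{\orbi,\mathbf n,\mathbf m}$. The substack $\st M^{\orbi\leq N}$ is the union of those strata for which every entry of $\mathbf n$ and $\mathbf m$ is at most $N$; as a union of open-and-closed pieces it is automatically open, and on any quasi-compact open of $\st M^{0,*}$ only finitely many such strata contribute.

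Next I construct each stratum as a root stack. \'Etale locally on $S$, a node of $P$ has the form $\spec O_S[x,y]/(xy-f)$ for some $f\in O_S$, and by \autoref{thm:orbinodal_structure} an orbinodal structure of index $n$ at that node is equivalent to the datum of a $t\in O_S$ with $t^n=f$, modulo the action of $\mu_n$. This is precisely the $n$-th root stack of $S$ along the Cartier divisor $V(f)$. Similarly, an orbinodal structure of index $m$ at a marked section $\sigma_j$ is the $m$-th root stack of $P$ along $\sigma_j(S)$. Performing these root constructions at each node and each marked section, and gluing over an \'etale cover that splits the singular locus, realizes $\st M^{\orbi,\mathbf n,\mathbf m}$ as an iterated root stack over the relevant open substack of $\st M^{0,*}$. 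Since root stacks of Cartier divisors in algebraic stacks are themselves algebraic, are smooth whenever the base and divisor are smooth, and are representable of finite type by Deligne--Mumford stacks, the forgetful morphism inherits all these properties. Assembling, $\st M^\orbi$ is the disjoint union of the strata $\st M^{\orbi,\mathbf n,\mathbf m}$, hence a smooth algebraic stack locally of finite type, and $\st M^{\orbi\leq N}\to\st M^{0,*}$ is representable by Deligne--Mumford stacks of finite type.

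The main obstacle is the globalization. Nodes are not canonically ordered and can collide or be smoothed out in families, so ``the set of nodes'' is not a constant sheaf on $S$ and the type data $(\mathbf n,\mathbf m)$ exists only after an \'etale base change. Handling this requires passing to an \'etale cover on which the singular locus of $P\to S$ is a disjoint union of sections, performing the root constructions over that cover, and descending along the symmetric group action that permutes nodes and sections. One also has to verify that the intrinsic notion of isomorphism of orbinodal curves matches the isomorphism relation on iterated root stacks, so that the fibered category built from the root construction is literally equivalent to $\st M^\orbi$. This compatibility is precisely the content of the coordinate-free log-geometric treatment on which \citep{olsson07:_log} is based.
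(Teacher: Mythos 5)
First, note that the paper does not actually prove this statement: it is quoted verbatim from \citet{olsson07:_log} (``We quote, without proof, a theorem of Olsson''), so there is no internal argument to compare against. Your root-stack picture is the standard way to understand Olsson's result, and your local analysis is correct: by \autoref{thm:orbinodal_structure}, an orbinodal structure of index $n$ at a node with smoothing parameter $f\in O_S$ is the datum of some $t$ with $t^n=f$ up to $\mu_n$, i.e.\ a lift of $S$ to the $n$-th root stack of the base along $V(f)$, and at a marked section it is the $m$-th root stack of $P$ along $\sigma_j(S)$, which carries no extra moduli.

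However, the globalization step as you state it has a genuine gap, and it is not just the bookkeeping of unordered nodes. You claim that the stabilizer orders at the nodes of the geometric fibers are locally constant on $S$ and hence cut $\st M^\orbi$ into open and closed strata whose disjoint union is $\st M^\orbi$. This fails because nodes are smoothed in families: over $S=\spec k\f{t}$ with $f=t^n$, the orbinodal curve $[\spec O_S[u,v]/(uv-t)/\mu_n]$ has index $n$ at the node of the closed fiber and no node at all in the generic fiber, so the fiberwise twisting type jumps and this connected $S$ does not map into any single stratum. Relatedly, the root stacks $\sqrt[n]{V(f)}$ and $\sqrt[m]{V(f)}$ are both canonically isomorphic to the base away from $V(f)$, so for distinct indices they are not disjoint inside $\st M^\orbi$; they form an \emph{open cover} glued along the locus where the node disappears, and this gluing is exactly the source of the non-separatedness of $\st M^\orbi\to\st M^{0,*}$ that the paper emphasizes. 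The conclusions do survive this correction---algebraicity, smoothness and local finiteness are \'etale-local, and over a quasi-compact chart $\st M^{\orbi\le N}$ is a union of finitely many such open root stacks, whence Deligne--Mumford and of finite type---but the argument must be phrased as an open cover rather than a partition. The remaining substantive point, that every family of orbinodal curves \'etale-locally arises from the root construction with indices that are well defined for the whole family (even across fibers where the node is smoothed) and that isomorphisms match, is precisely what Olsson's simple extensions of the canonical log structure encode; deferring it to \citep{olsson07:_log} is legitimate, but it means your proposal is a reading of Olsson's proof rather than an independent one.
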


We have a morphism $\st H^d \to \st M^\orbi$ given by
\[ (\orb P \to P; \sigma; \orb C \to \orb P) \mapsto (\orb P \to P; \sigma).\]
Define categories $\st FinCov^d$ and $\st Vect^d$ fibered over $\cat{Schemes}$ as follows
\begin{align*}
  \st FinCov^d(S) &= \{ (\orb P \to P \to S;\sigma; \phi \from \orb C \to \orb P), \text{ where $\phi$ is finite, flat of degree $d$} \}, \\
  \st Vect^d(S)  &= \{ (\orb P \to P\to S; \sigma; \orb F), \text{ where $\orb F$ is locally free of rank $d$ on $\orb P$}\}.
\end{align*}
In both definitions, $(\orb P \to P \to S; \sigma)$ is a pointed orbinodal curve. We have morphisms
\begin{equation}\label{eqn:reductions}
 \st H^d \to \st FinCov^d \to \st Vect^d \to \st M^\orbi.
\end{equation}
Indeed, the first is obvious; the second is given by 
\[ (\orb P \to P;\sigma; \phi \from \orb C \to \orb P) \mapsto (\orb P \to P; \sigma; \phi_* O_{\orb C});\]
and the last by 
\[ (\orb P \to P; \sigma; \orb F) \mapsto (\orb P \to P; \sigma).\]
We analyze each morphism in \eqref{eqn:reductions} one by one.

Before we proceed, we need some results on the structure of orbinodal curves. We first recall the notion of a \emph{generating sheaf} on a Deligne--Mumford stack from \citep[\S~5.2]{kresch09:_delig_mumfor}. Let $\orb X$ be a Deligne--Mumford stack with coarse space $\rho \from \orb X \to X$. A locally free sheaf $\orb E$ on $\orb X$ is a \emph{generating sheaf} if for every quasi coherent sheaf $\orb F$, the morphism
\[ \rho^*\rho_*(\sh Hom_{\orb X}(\orb E, \orb F) \otimes_{O_{\orb X}} \orb E) \to \orb F\]
  is surjective. Equivalently, $\orb E$ is a generating sheaf if and only if for every point $x$ of $\orb X$, the representation of $\Aut_x(\orb X)$ on the fiber of $\orb E$ at $x$ contains every irreducible representation of $\Aut_x(\orb X)$.
\begin{proposition}\label{thm:orbinodal_global_quot}
  Let $S$ be a scheme and $(\orb P \to P \to S; \sigma)$ a pointed orbinodal curve. There is a scheme $T$ and a surjective \'etale morphism $T \to S$ such that
  \begin{compactenum}
  \item $\orb P_T$ admits a finite, flat morphism from a projective scheme $Z$;
  \item $\orb P_T$ is the quotient of a quasi projective scheme by a linear algebraic group;
  \item $\orb P_T$ admits a generating sheaf.
  \end{compactenum}
\end{proposition}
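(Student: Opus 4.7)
My strategy is to first pass to an étale cover of $S$ over which the coarse space $P \to S$ is projective, then construct a line bundle $\orb L$ on $\orb P$ whose restriction to the residual gerbe at every stacky point generates the character group of the local cyclic stabilizer, and finally derive all three conclusions from $\orb L$. Projectivity of the coarse space is furnished by \autoref{thm:etale_local_proj}. The stacky locus of $\orb P$ consists of the marked sections $\sigma_i$ together with the nodes of $P \to S$; at any such point $x$ the stabilizer $\Aut_x(\orb P)$ is cyclic of order $n_x$, and by the local description in \autoref{thm:orbinodal_structure} line bundles on the local orbi-chart $[\spec A/\mu_{n_x}]$ are classified by $\mu_{n_x}^{\vee}$, so a line bundle with generating character exists locally. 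After a further étale base change on $S$ (to choose coherent branches at each node and to kill the $H^1$-obstruction to gluing), these local data assemble into a global $\orb L$ whose character at every stacky point generates $\mu_{n_x}^{\vee}$. Twisting $\orb L$ by an ample line bundle pulled back from $P$ — which preserves the characters at stacky points, since $\pi^*$ has trivial character — I arrange further that $\orb L^{\otimes N}$ descends to a very ample line bundle $M$ on $P$, where $N := \lcm_x n_x$.

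For (3), set $\orb E = \bigoplus_{i=0}^{N-1} \orb L^{\otimes i}$. Because $\orb L|_x$ generates $\mu_{n_x}^{\vee}$, the characters $\orb L^{\otimes i}|_x$ exhaust $\mu_{n_x}^{\vee}$ as $i$ varies, and since every irreducible representation of the cyclic group $\mu_{n_x}$ is a character, $\orb E$ contains every irreducible representation at every stacky point, hence is a generating sheaf. For (2), the frame bundle $X = \Isom_{\orb P}(O_{\orb P}^{\oplus N}, \orb E)$ is a $\Gl_N$-torsor with $\orb P = [X/\Gl_N]$; by \citep[Theorem~5.3]{kresch09:_delig_mumfor}, the existence of the generating sheaf combined with projectivity of the coarse space forces $X$ to be a quasi-projective scheme. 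For (1), choose a section $s \in H^0(P, M)$ not vanishing at the images of the stacky points (possible since $M$ is very ample and the stacky locus is finite over $S$) and form the cyclic cover $Z = \spec_{\orb P}\bigoplus_{i=0}^{N-1} \orb L^{\otimes(-i)} \to \orb P$ determined by $\orb L$ and $s$. This $Z$ is finite flat of degree $N$; nonvanishing of $s$ at the stacky points makes it étale there, and the generating-character condition on $\orb L$ then forces trivial stabilizers on $Z$, so $Z$ is an algebraic space. Being finite over $\orb P$ whose coarse space is projective, $Z$ is a projective scheme.

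The crux is the construction of $\orb L$ outlined at the start. Local existence is immediate from the classification of line bundles on $[\spec A/\mu_n]$, but producing a single globally defined $\orb L$ with prescribed generating character at every stacky point requires (i) a coherent global choice of a distinguished branch at each node (to fix the sign of the local character) and (ii) the vanishing of the $H^1$-obstruction to gluing the local line bundles into a global one. Both obstructions can be killed by passing to a further étale cover of $S$, as the statement permits. Once $\orb L$ is in hand, the remaining pieces — the generating sheaf as a direct sum, the frame-bundle quotient presentation via Kresch's theorem, and the finite flat cover via a cyclic cover construction — follow by standard formal arguments.
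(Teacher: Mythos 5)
Your route is genuinely different from the paper's. The paper's proof is essentially a chain of citations in the opposite logical order: part (1) is quoted verbatim from Olsson (\emph{op.\ cit.}, Theorem~1.13), part (2) is deduced from (1) via the theorem of Edidin--Hassett--Kresch--Vistoli that a stack with a finite flat cover by a scheme is a quotient stack (plus Kresch's remark to get quasi-projectivity of the atlas from quasi-projectivity of the coarse space), and part (3) is then Kresch's Theorem~5.3. You instead build everything from a single line bundle $\orb L$ whose character at each stacky point generates the character group of the cyclic stabilizer, deriving (3) as $\bigoplus_i \orb L^{\otimes i}$, (2) as the frame bundle of that sheaf, and (1) as the cyclic cover $z^N = s$. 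Granting $\orb L$, these three derivations are correct: the direct sum contains every character of each $\mu_{n_x}$, hence is generating; the stabilizers act faithfully on its fibers, so the frame bundle is an algebraic space, and Kresch's theorem supplies quasi-projectivity; and the cyclic cover with $s$ nonvanishing on the (finite, after étale localization) image of the stacky locus is étale with free $\mu_{n_x}$-action on the fibers over the stacky points, hence an algebraic space finite over the projective coarse space, hence a projective scheme. This is a more self-contained and arguably more illuminating argument than the paper's, which outsources (1) entirely.

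The gap is exactly where you locate the crux: the existence of $\orb L$ is asserted, not proved, and it is where all the content of the proposition lives --- everything else above is formal. Moreover the obstruction-theoretic framing you offer is not the right one. Gluing local line bundles given on an open cover is obstructed by a Čech $H^2$ of units, not an $H^1$; but the genuinely non-formal point is not a gluing obstruction at all. It is that at a stacky node the character of \emph{any} line bundle is forced to be the same on the two branches (a line bundle on $\orb P$ near a node is a line bundle on the normalization together with a $\mu_n$-equivariant identification of the fibers at the two preimages $q_1, q_2$), so you must exhibit a generator subject to that constraint. This is possible --- on the normalization the bundle $O(-q_1+q_2)$ has character of weight $+1$ at both preimages under the balanced identification of stabilizers, hence descends to a line bundle on $\orb P_s$ with generating character at the node and trivial character elsewhere --- and this is the computation that proves surjectivity of $\Pic(\orb P_s) \to \prod_x \mu_{n_x}^{\vee}$ over a geometric point $s$. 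To get $\orb L$ over an étale neighborhood of $s$ in $S$ you then need a separate deformation argument: obstructions to deforming a line bundle on a curve lie in $H^2(\orb P_s, O) = 0$, and Grothendieck existence plus Artin approximation convert the formal extension into one over an étale neighborhood. Without these two steps spelled out, the proof of the proposition has not actually been given; with them, your argument goes through.
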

\begin{proof}
  The first statement is due to \citet[Theorem~1.13]{olsson07:_log}. The existence of a finite flat cover $Z \to \orb P_T$ implies that $\orb P_T$ is the quotient of an algebraic space $Y$ by the action of a linear algebraic group by \citep[Theorem~2.14]{edidin01:_brauer}. We may assume that $T$ is affine and, by \autoref{thm:etale_local_proj}, that $P_T$ is projective over $T$. Then $P_T$ is quasi-projective. In this case, $Y$ can be proved to be quasi-projective \citep[Remark~4.3]{kresch09:_delig_mumfor}. Finally, since $\orb P$ is a quotient stack with a quasi projective coarse space, the third statement follows directly from \citep[Theorem~5.3]{kresch09:_delig_mumfor}.
\end{proof}

\begin{proposition}\label{thm:vect_to_orb}
  $\st Vect^d \to \st M^\orbi$ is an algebraic stack, locally of finite type.
\end{proposition}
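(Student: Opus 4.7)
The plan is to reduce algebraicity of $\st{Vect}^d$ to a statement \'etale-locally on $\st M^{\orbi}$, then invoke the Olsson--Starr representability theorem for $\Quot$ on tame Deligne--Mumford stacks admitting a generating sheaf. First I would note that it suffices to prove the following: for every morphism $T \to \st M^{\orbi}$ from an affine scheme $T$, corresponding to a pointed orbinodal curve $(\orb P \to P \to T; \sigma)$, the fiber product $\st V_T := T \times_{\st M^{\orbi}} \st{Vect}^d$ is an algebraic stack locally of finite type over $T$. This reformulation is equivalent to the stated assertion and reduces the problem to a family of vector-bundle stacks fibered over $T$.

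Since both properties are \'etale local on the base, I may pass to an \'etale cover of $T$. By \autoref{thm:orbinodal_global_quot}, I may therefore assume that $P$ is projective over $T$ and that $\orb P$ admits a generating sheaf $\orb E$. The fiber product $\st V_T$ parametrizes rank-$d$ locally free sheaves on $\orb P$ flat over $T$; these form an open substack of the stack $\operatorname{Coh}_{\orb P/T}$ of coherent sheaves on $\orb P$ flat over $T$, since local freeness is an open condition and the rank is locally constant among locally free sheaves. Hence it suffices to establish algebraicity and local finite type of $\operatorname{Coh}_{\orb P/T}$.

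The algebraicity of $\operatorname{Coh}_{\orb P/T}$ for proper, tame Deligne--Mumford stacks equipped with a generating sheaf is a standard application of the Olsson--Starr representability of $\Quot$. Indeed, any coherent sheaf $\orb F$ on $\orb P$ admits a surjection from some $(\dual{\orb E})^{\oplus N} \otimes \rho^* O_P(m)$ for suitable $N, m$, where $\rho \from \orb P \to P$ is the coarse space map; here one uses the defining property of a generating sheaf to reduce to a twist of a power of $\dual{\orb E}$, combined with projectivity of $P$. Varying $N$ and $m$, this realizes $\operatorname{Coh}_{\orb P/T}$ as an increasing union of open substacks, each obtained as the quotient of an open in a $\Quot$ scheme by a $\Gl_N$-action.

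The main obstacle is verifying that the orbinodal curves satisfy the hypotheses of Olsson--Starr. Tameness of $\orb P$ is automatic in characteristic zero; properness of $\orb P \to T$ is built into the orbinodal curve data; the generating sheaf is supplied by \autoref{thm:orbinodal_global_quot}; and the quasi-projectivity of the coarse space that is needed to invoke the $\Quot$ construction also follows from that same proposition. Assembling these ingredients yields $\Quot_{\orb P/T}$ as a separated algebraic space locally of finite type over $T$, whence the algebraicity and local finite typeness of $\operatorname{Coh}_{\orb P/T}$, of its open substack $\st V_T$, and finally of $\st{Vect}^d$ over $\st M^{\orbi}$.
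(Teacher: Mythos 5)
Your proposal is correct and follows essentially the same route as the paper: reduce to an affine base, pass to an \'etale cover to obtain a generating sheaf (via \autoref{thm:orbinodal_global_quot}) and a projective coarse space, realize rank-$d$ vector bundles as an open substack of the stack of coherent sheaves, and establish algebraicity of the latter via the $\Quot$ schemes of Olsson(--Starr). The only quibbles are cosmetic: the paper cites \autoref{thm:etale_local_proj} for projectivity of $P$ over the base, and the surjections supplied by the generating sheaf property are onto $\orb F$ from twists of $\orb E$ itself (not $\dual{\orb E}$).
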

\begin{proof}
  Let $S$ be a scheme and $(\orb P \to P \to S; \sigma)$ an object of $\st M^\orbi$. We must prove that the category of vector bundles of rank $d$ on $\orb P$ is an algebraic stack, locally of finite type. It suffices to prove this after passing to an \'etale cover of $S$. By \autoref{thm:orbinodal_global_quot}, we can assume that $\orb P \to S$ admits a generating sheaf and by \autoref{thm:etale_local_proj}, that $P \to S$ is projective. Now it can be shown that the stack $\st Coh_{\orb P/S}$ of coherent sheaves on $\orb P$, flat over $S$, is an algebraic stack, locally of finite type. A smooth atlas is given by the $\Quot$ schemes of \citet{olsson03:_quot_delig_mumfor}. We omit the details; see the pre-print by \citet[\S~2.1]{nironi08:_modul_spaces_semis_sheav_projec} for a complete proof. Clearly, the stack of vector bundles of rank $d$ on $\orb P$ is an open substack of $\st Coh_{\orb P/S}$.
\end{proof}

\begin{proposition}\label{thm:cover_to_vb}
  $\st FinCov^d \to \st Vect^d$ is representable by algebraic spaces of finite type.
\end{proposition}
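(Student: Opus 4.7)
The plan is to show that, after a suitable \'etale base change on $S$, the fiber of $\st{FinCov}^d \to \st{Vect}^d$ over a given object $(\orb P \to P \to S; \sigma; \orb F) \in \st{Vect}^d(S)$ is represented by a closed subscheme of an affine scheme of finite type over $S$. Since representability by algebraic spaces of finite type is \'etale local on the base, this suffices.

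Fix an object $(\orb P \to P \to S; \sigma; \orb F)$ of $\st{Vect}^d(S)$. Giving an object of $\st{FinCov}^d(T)$ lifting the pullback to an $S$-scheme $T$ is equivalent to giving a commutative $O_{\orb P_T}$-algebra structure on $\orb F_T$, i.e.\ an $O_{\orb P_T}$-linear multiplication
\[ m \from \orb F_T \otimes \orb F_T \to \orb F_T \]
and a unit
\[ i \from O_{\orb P_T} \to \orb F_T\]
satisfying associativity, commutativity, and the unit axiom (the resulting $\phi \from \spec_{\orb P_T} \orb F_T \to \orb P_T$ is automatically finite, flat, of degree $d$). So we must show that the functor of pairs $(m, i)$ subject to these axioms is representable by an algebraic space of finite type over $S$.

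By \autoref{thm:etale_local_proj} and \autoref{thm:orbinodal_global_quot}, after passing to an \'etale cover of $S$ we may assume that $P \to S$ is projective, that $\orb P \to S$ admits a generating sheaf, and that $\orb P$ is a global quotient of a quasi-projective scheme by a linear algebraic group. In particular $\orb P \to S$ is proper. Under these hypotheses, for any coherent sheaf $\orb G$ on $\orb P$ that is flat over $S$, the functor
\[ T \mapsto \Gamma(\orb P_T, \orb G_T) \]
is representable by a linear scheme (i.e.\ an affine cone) over $S$, of finite type; the proof is the standard one (as in Grothendieck's construction of $\underline{\Hom}$), adapted to the stack setting using the generating sheaf exactly as in Nironi's and Olsson's work on $\Quot$ schemes for tame Deligne--Mumford stacks. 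Applying this to the two coherent sheaves $\orb F^\vee \otimes \orb F^\vee \otimes \orb F$ and $\orb F$ (both flat over $S$ since $\orb F$ is locally free), we obtain an affine $S$-scheme $V$ of finite type parametrizing pairs $(m,i)$.

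The algebra axioms cut out a closed subscheme of $V$: associativity is the vanishing of the section $m(m\otimes \id) - m(\id \otimes m)$ of $\sh{H\!\!om}(\orb F^{\otimes 3}, \orb F)$, commutativity is the vanishing of $m - m \circ \tau$ in $\sh{H\!\!om}(\orb F^{\otimes 2}, \orb F)$, and the unit axiom is the vanishing of $m(i \otimes \id) - \id$ in $\sh{E\!nd}(\orb F)$. Each such vanishing locus is closed by the same representability of the section functor applied to the relevant Hom sheaf on $\orb P \times_S V$. Therefore the fiber is a closed subscheme of $V$, and hence an affine $S$-scheme of finite type; this proves the proposition.

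The one non-routine ingredient is the existence and finite-type nature of the linear scheme representing $T \mapsto \Gamma(\orb P_T, \orb G_T)$ for coherent $\orb G$ flat over $S$ on a proper orbinodal $\orb P \to S$; I would cite the same framework (generating sheaves plus projectivity of the coarse moduli) used in \autoref{thm:vect_to_orb} to invoke this. Once that is in hand, everything else reduces to formal manipulations with tensors of locally free sheaves.
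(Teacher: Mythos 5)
Your proposal is correct and follows essentially the same route as the paper: identify the fiber with the space of commutative $O_{\orb P}$-algebra structures $(i,m)$ on $\orb F$, represent global sections of the relevant coherent sheaves by affine $S$-schemes of finite type, and cut out the unit/commutativity/associativity axioms as closed conditions. The only cosmetic difference is in justifying the section functor's representability: the paper simply pushes forward along the (cohomologically trivial) coarse space map and invokes the standard cohomology-and-base-change complex for projective schemes, rather than the generating-sheaf machinery you gesture at, which is not needed for this step.
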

For the proof, we need two easy lemmas.
\begin{lemma}\label{thm:cohom_base_change}
  Let $S$ be an affine scheme and $\orb X \to S$ be a proper Deligne--Mumford stack with coarse space $\rho \from \orb X \to X$, where $X$ is a scheme. Let $\orb F$ be a coherent sheaf on $\orb X$, flat over $S$. Then, there is a finite complex $M_\bullet$ of locally free sheaves on $S$:
  \[ M_0 \to M_1 \to \dots \to M_n\]
  such that for every $f \from T \to S$, we have natural isomorphisms
  \[ H^i(f^*M_\bullet) \isom H^i(\orb X_T, \orb F_T);\]
\end{lemma}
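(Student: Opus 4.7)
The plan is to reduce to the classical cohomology and base change theorem for schemes by pushing forward along the coarse moduli morphism $\rho \from \orb X \to X$. Since we work in characteristic zero, every finite automorphism group of $\orb X$ is linearly reductive, so $\orb X$ is tame. Two consequences follow: $\rho_*$ is exact on quasi-coherent sheaves, and the formation of the coarse space commutes with arbitrary base change. Concretely, for every $f \from T \to S$ with induced morphism $f_X \from X_T \to X$, there are natural isomorphisms $f_X^*\rho_*\orb F \isom \rho_{T,*}\orb F_T$ and $H^i(\orb X_T, \orb F_T) \isom H^i(X_T, \rho_{T,*}\orb F_T)$.

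First I would check that $\rho_*\orb F$ is a coherent $O_X$-module, flat over $S$. Coherence is immediate from the properness of $\rho$. For flatness, one works \'etale locally on $X$: there we may write $\orb X \cong [U/G]$ with $G$ a finite group and $\orb F$ corresponding to a $G$-equivariant coherent sheaf on $U$, so $\rho_*\orb F$ is the $G$-invariant subsheaf of $\pi_*\orb F$, where $\pi \from U \to X$. Since $|G|$ is invertible, taking $G$-invariants is exact and commutes with arbitrary base change on $S$, so $O_S$-flatness is inherited by $\rho_*\orb F$. Note also that $X \to S$ is proper because $\orb X \to S$ and $\rho$ both are.

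Next I would invoke the classical cohomology and base change theorem (EGA~III or Mumford's \emph{Abelian Varieties}), applied to the proper morphism $X \to S$ and the $S$-flat coherent sheaf $\rho_*\orb F$. This produces a bounded complex $M_\bullet$ of finite rank locally free sheaves on $S$ satisfying $H^i(f^*M_\bullet) \isom H^i(X_T, f_X^*\rho_*\orb F)$ naturally in $f$. Composing with the two tameness isomorphisms yields
\[ H^i(f^*M_\bullet) \isom H^i(X_T, f_X^*\rho_*\orb F) \isom H^i(X_T, \rho_{T,*}\orb F_T) \isom H^i(\orb X_T, \orb F_T),\]
as required.

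The only genuinely stack-theoretic input is the base change isomorphism $f_X^*\rho_*\orb F \isom \rho_{T,*}\orb F_T$ for \emph{arbitrary} $f$; for a general proper morphism of schemes this would demand extra flatness, but for the coarse space of a tame stack in characteristic zero it holds unconditionally. Once this is granted, the rest of the argument is a formal concatenation of standard results.
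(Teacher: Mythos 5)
Your proposal is correct and follows essentially the same route as the paper: set $F = \rho_*\orb F$, apply the classical cohomology-and-base-change theorem to the proper scheme $X \to S$, and transfer the result back via the cohomological triviality of the coarse space map. You are in fact more careful than the paper's sketch, since you explicitly justify (via tameness in characteristic zero) the $S$-flatness of $\rho_*\orb F$ and the compatibility of $\rho_*$ with arbitrary base change, both of which the paper uses without comment.
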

\begin{proof}
  Let $F = \rho_* \orb F$. Then $F$ is a coherent sheaf on $X$, flat over $S$. Since $X$ is a proper scheme over $S$, the standard theorem on cohomology and base change for schemes \citep[\S II.5]{mumford08:_abelian}, gives a finite complex of locally free sheaves $M_\bullet$ with natural isomorphisms
  \begin{equation}\label{eqn:M_computes_on_coarse_space}
    H^i(f^*M_\bullet) \isom H^i(X_T, F_T).
  \end{equation}
  Now, the map $\rho_T \from \orb X_T \to X_T$ is the map to the coarse space. Since maps to the coarse spaces are cohomologically trivial for quasi-coherent sheaves, we have ${\rho_T}_* (\orb F_T) = F_T$ and a natural identification
  \begin{equation}\label{eqn:cohom_on_coarse_space_same_as_original}
     H^i(X_T, F_T) = H^i(\orb X_T, \orb F_T).
  \end{equation}
  Combining \eqref{eqn:M_computes_on_coarse_space} and \eqref{eqn:cohom_on_coarse_space_same_as_original}, we obtain the result.
\end{proof}
\begin{lemma}\label{thm:represent_global_sections}
  Let $\orb X \to S$ and $\orb F$ be as in \autoref{thm:cohom_base_change}. Then the contravariant functor from $\cat{Schemes}_S$ to $\cat{Sets}$ defined by
  \[ (f \from T \to S) \mapsto H^0(\orb X_T, \orb F_T)\]
  is representable by an affine scheme $Sect_{\orb F/S}$ over $S$.
\end{lemma}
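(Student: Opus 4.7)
The plan is to combine \autoref{thm:cohom_base_change} with the elementary fact that global sections of a locally free sheaf are representable by a (relative) affine scheme, and then cut out the kernel by a closed condition.

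First, I would apply \autoref{thm:cohom_base_change} to obtain a finite complex of locally free sheaves on $S$
\[ d_0 \from M_0 \to M_1 \to \cdots \to M_n \]
such that for every $f \from T \to S$ there is a natural isomorphism $H^0(f^*M_\bullet) \isom H^0(\orb X_T, \orb F_T)$. By definition, $H^0(f^*M_\bullet)$ is the kernel of the induced map $H^0(T, f^*M_0) \to H^0(T, f^*M_1)$. Thus it suffices to show that the functor
\[ T \mapsto \ker\bigl(H^0(T, f^*M_0) \to H^0(T, f^*M_1)\bigr) \]
is representable by an affine scheme over $S$.

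Next, for a locally free sheaf $M$ on $S$, the functor $T \mapsto H^0(T, f^*M)$ is represented by the affine $S$-scheme $\V(M) = \spec_S \Sym_{O_S}(\dual M)$, as follows from the standard computation $\Hom_S(T, \V(M)) = \Hom_{O_T}(f^*\dual M, O_T) = H^0(T, f^*M)$. Applying this to $M_0$ and $M_1$, the map $d_0$ induces an $S$-morphism $D \from \V(M_0) \to \V(M_1)$ of affine $S$-schemes. The subfunctor of $\V(M_0)$ consisting of sections killed by $d_0$ is precisely the fiber product
\[ Sect_{\orb F/S} := \V(M_0) \times_{\V(M_1)} S, \]
where $S \to \V(M_1)$ is the zero section. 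This is a closed subscheme of the affine $S$-scheme $\V(M_0)$, hence is itself affine over $S$.

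There is no real obstacle here; the only subtlety is making sure that the base-change isomorphism of \autoref{thm:cohom_base_change} identifies $H^0(\orb X_T, \orb F_T)$ not just set-theoretically but functorially with $\ker(f^*d_0)$ on global sections, so that the closed subscheme constructed above actually represents the desired functor. This is immediate from the naturality of the isomorphism in that lemma.
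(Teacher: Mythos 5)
Your proposal is correct and is essentially identical to the paper's proof: both pass to the total spaces $\spec_S \Sym(\dual{M_i})$ of $M_0$ and $M_1$ and define $Sect_{\orb F/S}$ as the scheme-theoretic preimage of the zero section under the induced map, invoking the naturality of the base-change isomorphism from \autoref{thm:cohom_base_change}. No gaps.
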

When no confusion is likely, we denote $Sect_{\orb F/S}$ by $Sect_{\orb F}$.
\begin{proof}
  Let $M_\bullet$ be as in \autoref{thm:cohom_base_change}. Let $T_i = \spec_S (\Sym^*(\dual{M_i}))$ be the total spaces of the vector bundles $M_i$ (we only care about $i = 0, 1$). Then $T_i$ are vector bundles over $S$ and we have a morphism $T_0 \to T_1$. Let $Sect_{\orb F} \subset T_0$ be the scheme theoretic preimage of the zero section of $T_1$. From the natural isomorphism
  \[ H^0(f^*M_\bullet) \isom H^0(\orb X_T, \orb F_T),\]
  it is easy to see that $Sect_{\orb F}$ represents the desired functor.
\end{proof}

We now have the tools to prove \autoref{thm:cover_to_vb}.
\begin{proof}[Proof of \autoref{thm:cover_to_vb}]
  Let $S$ be a scheme and $S \to \st Vect^d$ a morphism given by the object $(\orb P \to P \to S; \sigma; \orb F)$ of $\st Vect^d(S)$. We must prove that $\st FinCov^d \times_{\st Vect^d} S$ is an algebraic space of finite type. It suffices to prove this after passing to an \'etale cover of $S$. So, assume that $S$ is affine and $P$ is projective over $S$. By an \emph{$O_{\orb P}$-algebra structure} on $\orb F$, we mean a pair $(i,m)$, where $i \from O_{\orb P} \to \orb F$ and $m \from \orb F \otimes \orb F \to \orb F$ are morphisms of $O_{\orb P}$ modules that make $\orb F$ a sheaf of $O_{\orb P}$-algebras. Let $\st Alg_{\orb F}$ be the stack of $O_{\orb P}$-algebra structures on $\orb F$. The operation of taking the spectrum gives an equivalence
  \[\st Alg_{\orb F} \isom \st FinCov^d \times_{\st Vect^d} S.\]
  Now, an algebra structure on $\orb F$ is determined by a global section (corresponding to $i$) of $\orb F$ and one (corresponding to $m$) of $\sh Hom(\orb F \otimes \orb F, \orb F)$ subject to the conditions
  \begin{align*}
    m \circ (i \otimes \id) &=     m \circ (\id \otimes 1) = \id \quad &\text{(multiplicative identity)}\\
    m \circ {\rm sw} &= m \quad &\text{(symmetry)}\\
    m \circ (\id \otimes m) &= m \circ (m \otimes id) \quad &\text{(associativity)}, 
  \end{align*}
  where $\rm sw \from \orb F \otimes \orb F \to \orb F \otimes \orb F$ is the switch $x \otimes y \mapsto y \otimes x$. Each of these equations can be interpreted as the vanishing (agreeing with the zero section) of a morphism from $Sect_{\orb F} \times_S Sect_{\sh Hom(\orb F, \orb F \otimes \orb F)}$ to a suitable $Sect$ space. For example, the equality 
  \[ m \circ (\id \otimes 1) = \id\]
  can be phrased as the vanishing of the morphism 
  \[ Sect_{\orb F} \times_S Sect_{\sh Hom(\orb F, \orb F \otimes \orb F)} \to Sect_{\sh Hom(\orb F, \orb F)}\] defined by
  \[ (i, m) \mapsto m \circ (i \otimes \id) - \id.\]
  Thus, $\st Alg_{\orb F}$ is represented by the closed subscheme of $Sect_{\orb F} \times_S Sect_{\sh Hom(\orb F, \orb F \otimes \orb F)}$ defined by vanishing of the equations given by the conditions above.
\end{proof}

We finish the final piece of \eqref{eqn:reductions}.
\begin{proposition}\label{thm:hdg_to_cov}
  $\st H^d \to \st FinCov^d$ is an open immersion.
\end{proposition}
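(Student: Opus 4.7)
The morphism $\st H^d \hookrightarrow \st FinCov^d$ is fully faithful, because the definitions of morphisms in the two stacks coincide and only object-level conditions are added in $\st H^d$. Hence, to prove the inclusion is an open immersion, it suffices to prove that for any scheme $S$ and any morphism $S \to \st FinCov^d$ corresponding to an object $(\orb P \to P \to S; \sigma; \phi \from \orb C \to \orb P)$ of $\st FinCov^d(S)$, the locus in $S$ on which this object satisfies the further conditions defining $\st H^d$ is open. Those extra conditions are: (i) $\phi$ is \'etale at the generic points of components, at the nodes, and at the preimages of the sections $\sigma_j$ in every geometric fiber; and (ii) the associated map $\chi$ is representable on $\orb P \setminus \br\phi$. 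The plan is to verify openness for (i) and (ii) separately.

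For (i), the branch locus $\br\phi \subset \orb P$ is the closed complement of the \'etale locus of $\phi$. The requirement that no full fiber component lies in $\br\phi$ is equivalent to $\br\phi$ being an $S$-flat Cartier divisor in $\orb P$, which is an open condition on $S$ by standard openness of flatness. The remaining parts of (i)---disjointness of $\br\phi$ from the nodal locus $N \subset \orb P$ and from the stacky preimages $\Sigma_j \subset \orb P$ of the sections $\sigma_j$---each assert that two closed subsets of $\orb P$ are fiberwise disjoint. Since $\orb P \to S$ is proper (a composition of the proper coarse map $\orb P \to P$ with the proper nodal curve $P \to S$), the image of the closed intersection $\br\phi \cap N$ (resp.\ $\br\phi \cap \Sigma_j$) is closed in $S$, and its complement is an open locus on which the disjointness holds.

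For (ii), on the locus already cut out by (i), all stacky points of $\orb P$ lie in $\orb U := \orb P \setminus \br\phi$, while at non-stacky points of $\orb U$ representability of $\chi$ is automatic because the stabilizer is trivial. Thus (ii) reduces to injectivity of the local monodromy map $\mu_n \to \s_d$ at each stacky point. The \'etale-local description in \autoref{thm:orbinodal_structure} shows that both $n = |\Aut_x(\orb P)|$ and the induced monodromy map are locally constant along the stacky locus, so the locus where this monodromy fails to be injective is a union of connected components of the stacky locus. Being closed in the stacky locus, which is itself closed in $\orb P$, it is closed in $\orb P$; once again properness of $\orb P \to S$ makes its image in $S$ closed, and intersecting the complement with the open set from (i) yields the desired open subscheme.

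The one delicate point is (ii): assembling the pointwise injectivity conditions on the monodromy into an honestly closed subset of $\orb P$ requires both local constancy of the monodromy data along the stacky locus and closedness of the stacky locus itself in $\orb P$. Every other assertion in the proof is a routine pushforward of a closed set through the proper morphism $\orb P \to S$, combined with openness of flatness.
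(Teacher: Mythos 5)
Your proof is correct, and its skeleton is the same as the paper's: inside an arbitrary $S \to \st FinCov^d$, cut out the locus where the fiberwise conditions defining $\st H^d$ hold, using properness of $\orb P \to S$ to push closed bad loci down to $S$. The genuinely different ingredient is the treatment of representability. The paper handles it in one stroke: the relative inertia $\orb I_\chi \to \orb P$ of $\chi \from \orb P \to \st A_d$ is a representable finite morphism, the locus where its fibers have cardinality greater than one is closed in $\orb P$, and the complement of its image in $S$ is the representability locus. You unpack this into injectivity of the local monodromy $\mu_n \to \s_d$ at stacky points plus a local-constancy argument. That route works, but be aware that \autoref{thm:orbinodal_structure} describes only the local structure of $\orb P$, not of the cover, so it does not by itself give local constancy of the monodromy. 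What you actually need is the standard fact that for an automorphism of a scheme \'etale over a base, acting over the identity of the base, the fixed locus is open and closed; hence the cycle type of a generator of $\mu_n$ acting on the fibers of $\orb C$ is locally constant along each gerbe, and injectivity of $\mu_n \to \s_d$ is read off from that cycle type. The inertia-stack argument sidesteps this bookkeeping and is the more robust choice.

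A smaller imprecision in part (i): ``openness of flatness'' is not the right tool for the condition that no fiber component lies in $\br\phi$. Over a reduced point every morphism is flat, so flatness of $\br\phi \to S$ at all points over $s$ does not preclude $\br\phi_s$ from containing a component of $\orb P_s$. The clean argument is that the locus in $\br\phi$ where the fiber of $\br\phi \to S$ has dimension at least one is closed (upper semicontinuity of fiber dimension); push its image down to $S$ by properness, exactly as you do for the other conditions. Flatness of $\br\phi$ over the resulting open locus then follows from the slicing criterion, but as a consequence rather than as the defining condition. Neither of these points breaks the proof; both are standard facts, and the overall argument is sound.
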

\begin{proof}
  Let $S$ be a scheme and $S \to \st FinCov^d$ a morphism corresponding to $(\orb P \to P \to S; \sigma; \phi \from \orb C \to \orb P)$. Let $\pi \from \orb P \to S$ be the projection. Denote by $\Sigma \subset P$ the image in $P$ of the branch divisor of $\phi$. Clearly, the locus $S_1 \subset S$ over which $\Sigma$ is disjoint from the singular locus of $P \to S$ and the sections $\sigma_i$ is an open subscheme. Over $S_1$, the Cartier divisor $\Sigma \subset P$ does not contain any components of the fibers and hence it is $S_1$-flat.

  Let $\chi \from \orb P \to \st A_d$ be the morphism corresponding to the degree $d$ cover $\orb C \to \orb P$. Let $\orb I_{\chi} \to \orb P$ be the inertia stack of $\chi$. Then $\orb I_{\chi} \to \orb P$ is a representable finite morphism. The set $Z \subset \orb P$ over which $\orb I_{\chi}$ has a fiber of cardinality higher than one is a closed subset and its complement is exactly the locus where $\chi$ is representable. Let $S_2 = S_1 \cap (S \setminus \pi(Z))$.

  Then, by definition, $\st H^d \times_{\st FinCov^d} S = S_2$, which is an open subscheme of $S$.
\end{proof}

We have finished the first part of the proof of \autoref{thm:big_hurwitz}. 
\begin{proposition}\label{thm:algebraic}
  The morphism $\br \from \st H^d \to \st M$ is an algebraic stack, locally of finite type.
\end{proposition}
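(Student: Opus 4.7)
The plan is to assemble the chain of reductions built up in Propositions \ref{thm:hdg_to_cov}, \ref{thm:cover_to_vb}, and \ref{thm:vect_to_orb}, together with Theorem \ref{thm:orbinodal_stack} for the base and \autoref{thm:mhb} for the target, and then observe that the branch morphism is then automatically a morphism of algebraic stacks locally of finite type.

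First I would compose the forgetful morphisms
\[ \st H^d \to \st FinCov^d \to \st Vect^d \to \st M^\orbi. \]
By \autoref{thm:hdg_to_cov} the first arrow is an open immersion, by \autoref{thm:cover_to_vb} the second is representable by algebraic spaces of finite type, and by \autoref{thm:vect_to_orb} the third is relatively algebraic and locally of finite type. Since each of these relative properties is stable under composition, the total morphism $\st H^d \to \st M^\orbi$ is relatively algebraic and locally of finite type. To pass from this relative statement to an absolute one, I would combine it with \autoref{thm:orbinodal_stack}, which presents $\st M^\orbi$ as the filtered union $\bigcup_N \st M^{\orbi \leq N}$ of open algebraic substacks each of which is locally of finite type. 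Pulling back along this filtration gives $\st H^d = \bigcup_N \st H^d \times_{\st M^\orbi} \st M^{\orbi\leq N}$ as an ascending union of open algebraic substacks locally of finite type, and so $\st H^d$ is itself an algebraic stack locally of finite type.

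Next I would treat the branch morphism. The assignment
\[ (\orb P \to P;\,\sigma;\,\phi\from \orb C \to \orb P) \mapsto (P;\,\br\phi;\,\sigma), \]
where $\br\phi$ is the pullback of $\Sigma_d$ under the classifying map $\orb P \to \st A_d$ (identified with its image in $P$, which is disjoint from all orbinodal loci by the representability condition in \autoref{def:big_hurwitz}~\eqref{part:representability}), is manifestly functorial. Moreover, the flatness of $\br\phi$ over $S$ was already verified in the proof of \autoref{thm:hdg_to_cov}, so the assignment does land in $\st M$. Combining with \autoref{thm:mhb}, which tells us that $\st M$ is itself algebraic and locally of finite type, we conclude that $\br$ is a morphism of algebraic stacks locally of finite type, which is the content of the proposition.

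The only genuine subtlety, and the place where I expect the bookkeeping to require care, is in passing from the \emph{relative} algebraicity of each arrow in the chain to the \emph{absolute} statement about $\st H^d$. In particular, one must confirm that the open–substack structure $\st M^{\orbi\leq N} \subset \st M^{\orbi\leq N+1}$ pulls back cleanly through $\st Vect^d$ and $\st FinCov^d$, so that the resulting algebraic substacks of $\st H^d$ really do glue to the whole of $\st H^d$. This, however, is formal once the constituent representability results are in hand; no new geometric input is required, and the remaining steps beyond those already proved are purely diagrammatic.
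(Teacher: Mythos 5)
Your proof is correct and takes essentially the same route as the paper: the identical chain $\st H^d \to \st FinCov^d \to \st Vect^d \to \st M^\orbi$, invoking \autoref{thm:hdg_to_cov}, \autoref{thm:cover_to_vb}, \autoref{thm:vect_to_orb} and \autoref{thm:orbinodal_stack}, composed to get relative algebraicity and local finite type. The only (harmless) difference is in the endgame: the paper stays relative, reducing to $\st H^d \to \st M^{0,*}$ via the representability of $\st M \to \st M^{0,*}$, whereas you first extract the absolute statement that $\st H^d$ is algebraic and locally of finite type (the filtration by $\st M^{\orbi\leq N}$ is not even needed, since \autoref{thm:orbinodal_stack} already asserts this for $\st M^\orbi$ itself) and then recover the relative statement about $\br$ from the absolute ones about source and target.
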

\begin{proof}
  The forgetful morphism $\st M \to \st M^{0,*}$ is representable by algebraic spaces of finite type. Hence, it suffices to show that $\st H^d \to \st M^{0,*}$ is an algebraic stack, locally of finite type. We have the sequence of morphisms
  \begin{equation}\label{eqn:ladder}
    \st H^d \to \st FinCov^d \to \st Vect^d \to \st M^\orbi \to \st M^{0,*}.
  \end{equation}
  Starting from the right, \autoref{thm:orbinodal_stack}, \autoref{thm:vect_to_orb}, \autoref{thm:cover_to_vb} and \autoref{thm:hdg_to_cov} imply that each of the morphisms above is an algebraic stack, locally of finite type. Hence, so is their composite.
\end{proof}

\subsection{That $\br \from \st H^d \to \st M$ is of finite type}\label{sec:fin_type}

The strategy in this section is to study \eqref{eqn:ladder} more carefully and trim down the intermediate stacks so that they are of finite type. 
\begin{proposition}
  The morphism $\st H^d \to \st M^\orbi$ factors through the open substack $\st M^{\orbi\leq N}$ for any $N \geq d!$.
\end{proposition}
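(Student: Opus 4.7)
The plan is to exploit the representability condition built into the definition of $\st H^d$ to bound the size of each local stabilizer of $\orb P$. Recall from the definition of a pointed orbinodal curve that the only points of $\orb P$ with non-trivial automorphism groups are (the preimages of) the nodes and the marked points, and at such a point the stabilizer is a cyclic group $\mu_n$ for some $n \geq 1$. The claim $\st M^{\orbi \leq N}$ holds therefore amounts to showing that, for any object of $\st H^d$, every such $n$ is bounded by $N$.

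The key ingredient is part~\eqref{part:representability} of \autoref{def:big_hurwitz}: the nodes and the preimages of the marked points are sent by $\chi$ into $\st E_d$, and $\chi$ is representable. I would first recall that $\st E_d \cong B\s_d$, so at any such stacky point $p$ the target has automorphism group $\Aut_{\chi(p)}(\st A_d) = \s_d$. By the characterization of representability recalled in the paper (injectivity on automorphism groups, via \citep[Lemma~4.4.3]{av02}), the homomorphism
\[ \mu_n = \Aut_p(\orb P) \longrightarrow \Aut_{\chi(p)}(\st A_d) = \s_d \]
induced by $\chi$ is injective.

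It follows that $\mu_n$ is isomorphic to a (cyclic) subgroup of $\s_d$, so $n$ divides $|\s_d| = d!$; in particular $n \leq d!$. Since this bound holds at every stacky point of every geometric fiber, the object lies in $\st M^{\orbi \leq N}$ for any $N \geq d!$, which is exactly the required factorization.

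I do not anticipate a real obstacle here: the only subtlety is remembering that the representability hypothesis is imposed precisely at the points where $\orb P$ can carry non-trivial stabilizers, which is exactly what forces the global bound. (One could sharpen $d!$ to the maximum order of a cyclic subgroup of $\s_d$, but $d!$ is all we need.)
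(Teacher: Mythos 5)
Your proof is correct and follows essentially the same route as the paper: the stacky points of $\orb P$ are exactly the nodes and preimages of marked points, these are mapped into $\st E_d \cong B\s_d$, and representability of $\chi$ forces $\Aut_p(\orb P) \into \s_d$, bounding the stabilizer order by $d!$. No gaps.
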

\begin{proof}
  Take an object $(\orb P \to P \to S;\sigma;\chi)$ of $\st H^d$. Let $p$ be a point of $\orb P$ which is either a node or a marked point in its fiber. Then $\chi$ maps a neighborhood of $p$ into $\st E_d \cong B\s_d$. Since $\chi$ is required to be representable, we have
  \[ \Aut_p(\orb P) \into \Aut_{\chi(p)}(B\s_d) = \s_d.\]
  In particular, the size of $\Aut_p(\orb P)$ is at most $d!$.
\end{proof}

Recall that $\st M^{b,*} \subset \st M$ is the open and closed substack where the marked divisor has degree $b$. Set 
\[ \st H^d_b = \st M^{b,*} \times_{\st M} \st H^d,\]
and denote by $\st Vect^d_{l, N}$ the open substack of $\st Vect^d$ parametrizing vector bundles of fiberwise degree $l$ and $h^0 \leq N$.
\begin{proposition}
  The morphism $\st H^d_b \to \st Vect^d$ factors through the open substack $\st Vect^d_{l,N}$ for $l = -b/2$ and any $N \geq d$.
\end{proposition}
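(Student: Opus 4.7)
The plan is to verify the two claimed bounds on the vector bundle $\orb F = \phi_* O_{\orb C}$ associated to an object $(\orb P \to P; \sigma; \phi \from \orb C \to \orb P)$ of $\st H^d_b(S)$: on each geometric fiber $s$ the determinant $\det \orb F_s$ has degree $-b/2$, and $h^0(\orb P_s, \orb F_s) \leq d$. The two verifications are essentially independent.

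For the degree, I would pull back the universal discriminant construction of \autoref{sec:Ad} along the classifying map $\chi \from \orb P \to \st A_d$. Since $\chi^* \st A = \orb F$ and $\chi^* \st L = \dual{\det \orb F}$, the section $\delta$ of \eqref{eq:branch_section} pulls back to a section of $(\det \orb F)^{\otimes -2}$ whose zero scheme is $\br \phi$; hence $(\det \orb F)^{\otimes -2} \cong O_{\orb P}(\br \phi)$. Since we are working in $\st M^{b,*}$, the divisor $\br \phi$ has fiberwise degree $b$, which forces $\deg(\det \orb F_s) = -b/2$.

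For the $h^0$ bound I would use the identification $H^0(\orb P_s, \orb F_s) = H^0(\orb C_s, O_{\orb C_s})$ and reduce to showing $\dim_k A \leq d$ for $A := H^0(\orb C_s, O_{\orb C_s})$. First, $\orb C_s$ is reduced: it is Cohen--Macaulay of dimension one (being finite flat over the CM base $\orb P_s$) hence $S_1$, and the condition in \autoref{def:big_hurwitz}~\eqref{part:representability} that $\phi$ is \'etale over the generic points of $\orb P_s$ ensures it is generically reduced; together these force reducedness. Consequently $A$ is a finite-dimensional reduced $k$-algebra, so $A \cong k^r$ with $r$ the number of geometric connected components of $\orb C_s$. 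The heart of the argument is that every $\sigma \in A$ is integral of degree at most $d$ over $k$: multiplication by $\sigma$ is an $O_{\orb P_s}$-linear endomorphism of the rank-$d$ bundle $\orb F_s$, whose characteristic polynomial lies in $H^0(\orb P_s, O_{\orb P_s})[T] = k[T]$ and annihilates $\sigma$ by Cayley--Hamilton (applied to $1 \in \orb F_s$). In $A \cong k^r$ any element with $r$ pairwise distinct coordinates (possible since $k$ is infinite of characteristic zero) has minimal polynomial of degree exactly $r$, so $r \leq d$.

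Neither step presents a real obstacle. The degree claim amounts to unwinding the universal discriminant construction from \autoref{sec:Ad}, and the only mildly delicate input in the $h^0$ bound is reducedness of $\orb C_s$, which is standard once one has the generic \'etaleness hypothesis built into \autoref{def:big_hurwitz}. After that, Cayley--Hamilton applied to an element of $A \cong k^r$ with distinct coordinates completes the argument without any further geometric analysis of components of $\orb C_s$ or their images in $\orb P_s$.
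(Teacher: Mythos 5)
Your proof is correct, and the degree computation is exactly the paper's: the branch divisor is the zero locus of a section of $(\det \phi_* O_{\orb C})^{\otimes(-2)}$, so its fiberwise degree $b$ forces $\deg(\det \orb F_s) = -b/2$. Where you diverge is in the $h^0$ bound. The paper disposes of it in one line --- ``since $\orb C$ is a reduced curve which is a degree $d$ cover of the connected curve $\orb P$, we must have $h^0(\phi_*O_{\orb C}) \leq d$'' --- implicitly invoking the standard facts that $h^0(O_{\orb C_s})$ equals the number of connected components of the reduced proper curve $\orb C_s$, and that each component contributes a locally free direct summand of rank at least one to the rank-$d$ bundle $\phi_*O_{\orb C_s}$, so there are at most $d$ of them. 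You instead bound the number of components by a Cayley--Hamilton argument: every global function is integral of degree $\leq d$ over $k = H^0(O_{\orb P_s})$, and an element of $k^r$ with distinct coordinates has minimal polynomial of degree $r$. Both routes are valid and of comparable length; yours trades the decomposition of the pushforward into summands for a purely algebraic integrality statement, and has the merit of actually supplying the reducedness of $\orb C_s$ ($S_1$ from finite flatness over a Cohen--Macaulay curve, plus $R_0$ from \'etaleness over the generic points), which the paper asserts without proof.
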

\begin{proof}
  Consider a geometric point $(\orb P \to P; \sigma; \phi \from \orb C \to \orb P)$ of $\st H^d_b$. Then, the branch divisor of $\phi$, which is a section of $\det{\phi_*O_{\orb C}}^{\otimes(-2)}$, has degree $b$. Hence $\phi_* O_{\orb C}$ has degree $l$. Furthermore, since $\orb C$ is a reduced curve which is a degree $d$ cover of the connected curve $\orb P$, we must have $h^0(\phi_*O_{\orb C}) \leq d$.
\end{proof}

\begin{proposition}\label{thm:vec_fin_type}
  The morphism $\st Vect^d_{l,N} \to \st M^\orbi$ is of finite type.
\end{proposition}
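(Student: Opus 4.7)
The plan is to work étale-locally on the target $\st M^\orbi$ and reduce to a boundedness statement that can be resolved via Quot schemes on Deligne--Mumford stacks. By \autoref{thm:orbinodal_global_quot} and \autoref{thm:etale_local_proj}, I may assume $S$ is affine, that $P \to S$ is projective with an $S$-ample line bundle $L$, and that $\orb P$ admits a generating sheaf $\orb E$; pull $L$ back to a line bundle on $\orb P$, still called $L$. It then suffices to show that the fibered category over $S$ whose objects are rank $d$ vector bundles $\orb F$ on $\orb P$, flat over $S$, with fiberwise degree $l$ and $h^0 \leq N$, is of finite type over $S$.

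The core step is boundedness: I would exhibit an integer $k_0$ and an integer $m$, depending only on $(\orb P \to S, \orb E, L)$ and the numerical data $d, l, N$, such that for every such $\orb F$ and every geometric point $s \in S$, the twisted sheaf $\orb F \otimes L^{k_0}$ on $\orb P_s$ has vanishing higher cohomology and admits a surjection from $\orb E^{\oplus m}$. The hypothesis $h^0(\orb F) \leq N$ is essential here: rank and degree alone do not bound a family of locally free sheaves on a curve (on $\P^1$, the bundles $O(n) \oplus O(-n)$ all share rank $2$ and degree $0$), but together with a bound on $h^0$ they control the Harder--Narasimhan polygon and hence the regularity. For projective schemes this is classical Castelnuovo--Mumford regularity; for projective Deligne--Mumford stacks with a generating sheaf, the analogue is furnished by Nironi's theory of modified Hilbert polynomials \citep{nironi08:_modul_spaces_semis_sheav_projec}, where rank, degree, and $h^0$ collectively bound the modified Hilbert polynomial of $\orb F$.

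Once boundedness is in place, every $\orb F$ in the family is a quotient of the fixed sheaf $\orb G = \orb E^{\oplus m} \otimes L^{-k_0}$, and the open substack of Olsson's Quot scheme $\Quot(\orb G/\orb P/S)$ parametrizing surjections $\orb G \onto \orb F$ with $\orb F$ locally free of the prescribed invariants is a $\Gl_m$-torsor over $\st Vect^d_{l,N} \times_{\st M^\orbi} S$. Since Olsson's Quot scheme is of finite type over $S$ \citep{olsson03:_quot_delig_mumfor}, so is $\st Vect^d_{l,N} \times_{\st M^\orbi} S$, proving the proposition. The main obstacle is precisely the boundedness step: controlling the modified Hilbert polynomial of $\orb F$ from rank, degree, and a bound on $h^0$ alone requires the full strength of regularity theory on projective Deligne--Mumford stacks, and is the only place where serious input beyond the algebraicity proof of \autoref{thm:vect_to_orb} is needed.
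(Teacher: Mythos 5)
Your proposal is correct and follows the same overall strategy as the paper: reduce to an affine base with projective coarse space and a generating sheaf, identify the problem as one of boundedness, and observe that the hypothesis $h^0 \leq N$ is the essential numerical input (rank and degree alone do not suffice, exactly as your $O(n)\oplus O(-n)$ example shows). Where you diverge is in the implementation. The paper never invokes regularity theory on stacks: it transfers the family to the coarse curve via $F_{\orb E}(-) = \rho_*\sh Hom_{\orb X}(\orb E, -)$, notes that a family of sheaves on a projective \emph{curve} is bounded once rank, degree and $h^0$ are bounded, checks by elementary bookkeeping that $h^0(F_{\orb E}(\orb F)_u)$ is controlled by $h^0(\orb F_u)^{\oplus M} \leq MN$ (using a surjection $O_P^{\oplus M} \onto \rho_*\orb E$), and then lifts boundedness back to the stack by presenting $\orb F$ as the cokernel of a map between two fixed sheaves of the form $\orb E \otimes O_X(-M)^{\oplus N}$, parametrized by the finite-type scheme $Sect_{\orb H/S}$ of \autoref{thm:represent_global_sections}. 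You instead outsource the boundedness to Nironi's regularity theory for projective Deligne--Mumford stacks and conclude via the open locus of Olsson's Quot scheme; this works and is arguably cleaner, at the cost of importing heavier machinery, whereas the paper's double-surjection argument is self-contained given the $Sect$ schemes already constructed. Two small imprecisions in your write-up, neither fatal: rank and degree already determine Nironi's modified Hilbert polynomial on a curve --- the $h^0$ bound is the additional Kleiman-type condition needed for boundedness \emph{beyond} fixing that polynomial, not an input to it; and the $\Gl_m$-torsor claim requires the usual normalization (restricting to the open locus where $k^m \to \Hom(\orb E \otimes L^{-k_0}, \orb F)$ is an isomorphism), though for quasi-compactness a finite-type surjection onto the stack already suffices.
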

For the proof, we need some results about the boundedness of families of sheaves on Deligne--Mumford stacks. Let $S$ be an affine scheme and $\orb X \to S$ a Deligne--Mumford stack with coarse space $\rho \from \orb X \to X$, and a generating sheaf $\orb E$. Let $O_X(1)$ be an $S$-relatively ample line bundle on $X$. Let $U$ be an $S$-scheme, not necessarily of finite type, and $\orb F$ a sheaf on $\orb X_U$.  We say that the family of sheaves $(\orb X_U, \orb F)$ is \emph{bounded} if there is an $S$-scheme $T$ of finite type and a sheaf $\orb G$ on $\orb X_T$ such that every geometric fiber $(\orb X_u, \orb F_u)$ appearing in $(\orb X_U, \orb F)$ over $U$ appears in $(\orb X_T, \orb G)$ over $T$. In this case, we say that $(\orb X_T, \orb G)$ \emph{bounds} $(\orb X_U, \orb F)$.

Set \[ F_{\orb E}(-) = \rho_* \sh Hom_{\orb X}(\orb E, -).\] Then $F_{\orb E}$ takes exact sequences of quasi-coherent sheaves on $\orb X$ to exact sequences on of quasi-coherent sheaves on $X$, because $\rho_*$ is cohomologically trivial.
\begin{lemma}
  In the above setup, if the family $(X_U, F_{\orb E}(\orb F))$ is bounded, then the family $(\orb X_U, \orb F)$ is also bounded.
 \end{lemma}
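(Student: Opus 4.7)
The plan is to use the generating-sheaf property to realize each $\orb F_u$ as a quotient of a sheaf canonically produced from $F_{\orb E}(\orb F_u)$, reducing the boundedness of $(\orb X_U,\orb F)$ to that of a relative Quot stack on $\orb X_T$ together with the finiteness of ordinary Hilbert polynomials coming from the hypothesis.

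First, I would recast the generating condition functorially. Setting $A = \rho_*\sh End(\orb E)$, the sheaf $F_{\orb E}(\orb F) = \rho_*\sh Hom(\orb E,\orb F)$ is naturally a right $A$-module, and the left-adjoint functor $G_{\orb E}(M) := \rho^* M \otimes_{\rho^* A} \orb E$ enjoys a canonical surjection
\[
G_{\orb E}\bigl(F_{\orb E}(\orb F)\bigr) \twoheadrightarrow \orb F,
\]
which is precisely the map $\rho^*\rho_*(\sh Hom(\orb E,\orb F)\otimes \orb E)\twoheadrightarrow \orb F$ in the definition of a generating sheaf. In particular, $\orb F$ is recovered as a quotient of a sheaf produced by the fixed operation $G_{\orb E}$ applied to $F_{\orb E}(\orb F)$.

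Next, let $(X_T,G)$ be a finite-type family on $X_T$ bounding $(X_U,F_{\orb E}(\orb F))$. Since $A$-module structures on $G$ are parameterized by a scheme of finite type over $T$ (a choice of action morphism $A_T\otimes G \to G$ satisfying finitely many polynomial identities), I would stratify or refine $T$ to arrange that $G$ carries a compatible $A_T$-module structure with $G_{t(u)}\cong F_{\orb E}(\orb F_u)$ as $A$-modules for some $t(u)\in T$ attached to each $u\in U$. Setting $\orb G := G_{\orb E}(G)$ on $\orb X_T$, each $\orb F_u$ is then a quotient of the fiber $\orb G_{t(u)}$. Finally, I would appeal to the Olsson--Starr relative Quot stack $\st Q := \Quot_{\orb X_T/T}(\orb G)$. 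It is locally of finite type over $T$, and by Nironi's machinery it decomposes into pieces of finite type indexed by the modified Hilbert polynomial of the quotient. Since the modified Hilbert polynomial of $\orb F_u$ is determined by the ordinary Hilbert polynomial of $F_{\orb E}(\orb F_u)$, which runs over a finite set by hypothesis, only finitely many such pieces are visited. Their union, equipped with the universal quotient, supplies a finite-type algebraic space $T'$ over $S$ and a coherent sheaf on $\orb X_{T'}$ that bounds $(\orb X_U,\orb F)$.

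The main obstacle I expect is the middle step: arranging the $A_T$-module structure on $G$ so that $G_{\orb E}(G)$ genuinely specializes to $G_{\orb E}(F_{\orb E}(\orb F_u))$ at $t(u)$, rather than merely recovering $F_{\orb E}(\orb F_u)$ as an $O_X$-module. Once this bookkeeping is set up, the remaining formal ingredients --- the existence of the Olsson--Starr Quot stack and its finiteness within each modified-Hilbert-polynomial stratum --- are exactly those already invoked in the proof of \autoref{thm:vect_to_orb}.
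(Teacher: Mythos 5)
Your strategy is workable but it is genuinely different from, and considerably heavier than, the argument in the paper. The paper never invokes a Quot stack on $\orb X$: it uses the boundedness of $(X_U, F_{\orb E}(\orb F))$ to produce a surjection $O_X(-M)^{\oplus N}\otimes_S O_U \onto F_{\orb E}(\orb F)$, lifts this via the generating property to a surjection $\orb E \otimes O_X(-M)^{\oplus N}\otimes_S O_U \onto \orb F$, and then \emph{repeats the same step for the kernel} $\orb K$ (whose $F_{\orb E}$-image is again bounded, since $F_{\orb E}$ is exact). This presents every $\orb F_u$ as the cokernel of a map between two \emph{fixed} locally free sheaves on $\orb X$, so the bounding family is simply the universal cokernel over the affine finite-type scheme $Sect_{\orb H/S}$ of \autoref{thm:represent_global_sections} --- no stacky Quot spaces and no Hilbert-polynomial stratification are needed. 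By contrast, your route uses only one surjection and then outsources the finiteness to the Olsson--Starr Quot stack together with Nironi's boundedness of the fixed-modified-Hilbert-polynomial pieces; that is legitimate, but it imports exactly the machinery this lemma is designed to let the paper avoid. Two further remarks on your middle step. First, the $A_T$-module bookkeeping you flag as the main obstacle is an avoidable, self-inflicted complication: the generating property gives a surjection $\rho^*V\otimes_{O_{\orb X}}\orb E \onto \orb F$ for \emph{any} $O_X$-module surjection $V\onto F_{\orb E}(\orb F)$ (the plain tensor product surjects onto your $\otimes_{\rho^*A}$ version), so you may take $\orb G=\rho^*G\otimes_{O_{\orb X}}\orb E$ with $G$ regarded only as an $O_{X_T}$-module, and every $\orb F_u$ is already a quotient of the fiber $\orb G_{t(u)}$. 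Second, your finiteness count is fine in spirit (only finitely many Hilbert polynomials of $F_{\orb E}(\orb F_u)$ occur, by generic flatness and Noetherian induction on $T$, and these determine the modified Hilbert polynomials), but you should note that the bounding family $G$ need not be $T$-flat, so this finiteness is not just local constancy. With these adjustments your argument closes up; the paper's version remains the more elementary of the two.
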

 \begin{proof}
   Since $F_{\orb E}(\orb F)$ is bounded, we have a surjection
   \[O_X(-M)^{\oplus N} \otimes_S O_U \onto F_{\orb E}(\orb F)\]
   for large enough $M$ and $N$. Since $\orb E$ is a generating sheaf, this gives a surjection
   \begin{equation}\label{eqn:sur1}
     \orb E \otimes_{\orb X} O_{X}(-M)^{\oplus N} \otimes_S O_U \onto \orb F.
   \end{equation}
   Let $\orb K$ be the kernel. Then, $(X_U, F_{\orb E}(\orb K))$ is also  bounded, and by the same reasoning as above, we have a surjection
   \begin{equation}\label{eqn:sur2}
     \orb E \otimes_{\orb X} O_X(-M')^{\oplus N'} \otimes_S O_U \onto \orb K
   \end{equation}
   for large enough $M'$ and $N'$. Combining \eqref{eqn:sur1} and \eqref{eqn:sur2}, $\orb F$ can be expressed as the cokernel
   \[ \orb E \otimes_{\orb X}O_X(-M')^{\oplus M'} \otimes_S O_U \to \orb E \otimes_{\orb X}O_X(-M)^{\oplus M} \otimes_S O_U \onto \orb F.\]
   Set
   \[ \orb H = \sh Hom_{\orb X}\left(\orb E \otimes_{\orb X}O_X(-M')^{\oplus N'}, \orb E \otimes_{\orb X} O_X(-M)^{\oplus N}\right),\]
   and $T = Sect_{\orb H/S}$. By \autoref{thm:represent_global_sections}, $T \to S$ is of finite type. Letting $\orb G$ be the cokernel of the universal homomorphism on $\orb X_T$, we see that $(\orb X_T, \orb G)$ bounds $(\orb X_U, \orb F)$.
\end{proof}
\begin{remark}\label{rem:b_for_curves}
  In the case of a curve $\orb X \to S$, the family $(X_U, F_{\orb E}(\orb F))$ is bounded if the degree, rank and $h^0$ of $F_{\orb E}(\orb F)_u$ are bounded for $u \in U$.
\end{remark}

We now have the tools to prove \autoref{thm:vec_fin_type}.
\begin{proof}[Proof of \autoref{thm:vec_fin_type}]
  Let $S$ be a connected affine scheme and $S \to \st M^\orbi$ a morphism given by the pointed orbinodal curve $(\orb P \stackrel\rho\to P \to S; \sigma)$. We must prove that $\st Vect^d_{l,N} \times_{\st M^\orbi} S \to S$ is of finite type. After passing to an \'etale cover of $S$ if necessary, assume that
  \begin{compactenum}
  \item $P \to S$ is projective with relatively ample line bundle $O_P(1)$ (this is possible by \autoref{thm:etale_local_proj}),
  \item We have a generating sheaf $\orb E$ on $\orb P$ (this is possible by \autoref{thm:orbinodal_global_quot}).
  \end{compactenum}
  Set $E = \rho_* \orb E$. Since $\orb E \otimes \rho^*O_P(-1)$ is also a generating sheaf, by twisting $\orb E$ by $\rho^*O_P(-1)$ enough times, assume that we have a surjection $O_P^{\oplus M} \to E$ for some $M$.
  
  Let $U \to \st Vect^d_{l,N} \times_{\st M^\orbi}S$ be a surjective map from a scheme (not necessarily of finite type), given by the family  $(\orb P_U \to P_U \to U; \sigma; \orb F)$. It suffices to prove that $(\orb P_U, \orb F)$ is a bounded family of sheaves.
  
  Set \[F = F_{\orb E}(\orb F) = \rho_* \sh Hom_{\orb P}(\orb E, \orb F).\]
  By \autoref{rem:b_for_curves}, it suffices to show that thee degree, rank and $h^0$ of $F_u$ are bounded. The rank of $F_u$ is constant; the degree of $\st Hom(\orb E, \orb F)_u$ is constant. It is easy to see that the degree of $\st Hom(\orb E, \orb F)_u$ and the degree of $F_u$ differ by a bounded amount, depending only on $\orb P \to P$ and $\orb E$. Hence the degree of $F_u$ is bounded. Likewise, it is easy to see that $h^0(F_u)$ and $h^0(\st Hom(\rho_*\orb E, \rho_*\orb F)_u)$ differ by a bounded amount, depending only on $\orb P \to P$ and $\orb E$. On the other hand,
  \begin{align*}
    H^0(\st Hom(\rho_*\orb E, \rho_*\orb F)_u) 
    &= \Hom(E_u, \rho_* \orb F_u) \\
    &\subset \Hom(O_{P_u}^{\oplus M}, \rho_* \orb F_u) \\
    &= H^0(\orb F_u)^{\oplus M}.
  \end{align*}
  By hypothesis, the final vector space has dimension at most $MN$. It follows that $h^0(F_u)$ is bounded. We conclude that $(\orb P_U, \orb F)$ is a bounded family of sheaves.
\end{proof}
We have now finished the second part of the proof of \autoref{thm:big_hurwitz}.\begin{proposition}\label{thm:br_finite_type}
  $\br \from \st H^d \to \st M$ is of finite type.
\end{proposition}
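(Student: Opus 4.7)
The plan is to assemble the finite-type facts built up in this section and then transfer finite type across the forgetful morphism $\st M \to \st M^{0,*}$. Since \autoref{thm:algebraic} already shows $\br$ is locally of finite type, I only need to verify quasi-compactness. Decomposing $\st M = \bigsqcup_{b, n \geq 0} \st M^{b,n}$ reduces the task to proving, for each fixed $(b,n)$, that $\br|_{\st H^d_{b,n}} \from \st H^d_{b,n} \to \st M^{b,n}$ is of finite type, where $\st H^d_{b,n} := \br^{-1}(\st M^{b,n})$.

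Fix $b$ and $n$. First I would combine the two preceding propositions of this subsection (the factorizations through $\st M^{\orbi \leq d!}$ and through $\st Vect^d_{-b/2, d}$) with the chain \eqref{eqn:ladder}, restricting $\st FinCov^d$ to the preimage of $\st Vect^d_{-b/2, d}$. This yields
\[
\st H^d_{b,n} \hookrightarrow \st FinCov^d \to \st Vect^d_{-b/2, d} \to \st M^{\orbi \leq d!} \to \st M^{0,n},
\]
in which every arrow is of finite type by \autoref{thm:hdg_to_cov}, \autoref{thm:cover_to_vb}, \autoref{thm:vec_fin_type}, and \autoref{thm:orbinodal_stack} respectively (the leftmost open immersion is quasi-compact because the next stack in the chain is locally Noetherian). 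Composing, $\st H^d_{b,n} \to \st M^{0,n}$ is of finite type.

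To upgrade this to finite type over $\st M^{b,n}$, I would use a graph argument. The forgetful morphism $\gamma \from \st M^{b,n} \to \st M^{0,n}$ is representable by separated algebraic spaces of finite type: \'etale-locally on the base, one may (as in \autoref{thm:etale_local_proj}) assume that the underlying curve is projective, whereupon the fibers of $\gamma$ become open subschemes of Hilbert schemes of $b$ points. The branch morphism $\br|_{\st H^d_{b,n}}$ lifts the composition of the previous paragraph through $\gamma$, so it factors as
\[
\st H^d_{b,n} \xrightarrow{\Gamma_\br} \st H^d_{b,n} \times_{\st M^{0,n}} \st M^{b,n} \xrightarrow{\mathrm{pr}_2} \st M^{b,n}.
\]
Separatedness of $\gamma$ makes $\Gamma_\br$ a closed immersion, and $\mathrm{pr}_2$ is the base change of the just-established finite-type morphism $\st H^d_{b,n} \to \st M^{0,n}$ along $\gamma$. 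Both factors are of finite type, hence so is $\br|_{\st H^d_{b,n}}$.

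The main obstacle is not geometric: all the real boundedness work already happens in \autoref{thm:vec_fin_type}. The delicate points here are purely bookkeeping --- one must check that restricting to each $(b,n)$-stratum makes every arrow of the chain genuinely quasi-compact rather than merely locally of finite type, and one must verify that $\gamma$ is separated so that $\Gamma_\br$ really is a closed immersion.
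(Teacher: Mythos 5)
Your proposal is correct and follows essentially the same route as the paper: restrict to the open and closed substacks where $b$ (and $n$) are fixed, run the chain $\st H^d_b \to \st FinCov^d \to \st Vect^d_{l,N} \to \st M^{\orbi\leq N} \to \st M^{0,*}$ using \autoref{thm:hdg_to_cov}, \autoref{thm:cover_to_vb}, \autoref{thm:vec_fin_type} and \autoref{thm:orbinodal_stack}, and then descend finite type from $\st M^{0,*}$ to the actual target. Your final graph-factorization step is just the explicit proof of the cancellation fact the paper invokes (if $\orb X \to \orb Z$ is of finite type then so is $\orb X \to \orb Y$), so there is no substantive difference.
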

\begin{proof}
  Since the open substacks $\st M^{b,*}$ cover $\st M$, it suffices to show that $\br \from \st H^d_b = \st H^d \times_{\st M} \st M^{b,*} \to \st M^{b,*}$ is of finite type. With $l = -b/2$, and $N$ large enough, we have the following diagram, 
  \[
  \begin{tikzpicture}
    \matrix (m) [matrix of math nodes, row sep=2em, column sep=3em]{
      &{\st H_b^d} && {\st FinCov^d} \\
      &&\st Vect^d_{l,N}&{\st Vect^d}\\
      {\st M^{b,*}}&{\st M^{0,*}}&{\st M^{\orbi\leq N}} & {\st M^\orbi}\\
    };
    \path [->, map]
    (m-1-2) edge [very thick] node[auto]{0} (m-1-4)
    (m-1-4) edge [very thick] node[auto]{1} (m-2-4)
    (m-2-3) edge [very thick] node[auto]{3} (m-3-4)
    (m-3-3) edge [very thick] node[auto]{2} (m-3-2)
    (m-1-2) edge node[auto]{4} (m-3-1)
    (m-1-2) edge (m-2-3)
    (m-1-2) edge (m-3-2)
    (m-1-2) edge (m-3-3)
    (m-2-3) edge (m-2-4)
    (m-3-3) edge (m-3-4)
    (m-2-4) edge (m-3-4)
    (m-3-1) edge (m-3-2)
    ;
  \end{tikzpicture}.
  \]
  The thick arrows in the diagram are known to be of finite type: $(0)$ is an open immersion, $(1)$ is of finite type by \autoref{thm:cover_to_vb}, $(2)$ by \autoref{thm:orbinodal_stack}, and $(3)$ by \autoref{thm:vec_fin_type}. Recall that for algebraic stacks $\orb X$, $\orb Y$, $\orb Z$, all locally of finite type, and morphisms $\orb X \to \orb Y \to \orb Z$, we have the following:
\begin{compactenum}
\item If $\orb X \to \orb Y$ and $\orb Y \to \orb Z$ are of finite type, then $\orb X \to \orb Z$ is also of finite type;
\item If $\orb X \to \orb Z$ is of finite type, then $\orb X \to \orb Y$ is also of finite type.
\end{compactenum}
Using the two repeatedly reveals that $(4)$ is also of finite type.
\end{proof}

\subsection{That $\br \from \st H^d \to \st M$ is Deligne--Mumford}\label{sec:dm}
\begin{proposition}\label{thm:dm}
  $\br \from \st H^d \to \st M$ is representable by Deligne--Mumford stacks.
\end{proposition}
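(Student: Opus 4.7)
The plan is to use the standard criterion that a morphism $f \from \st X \to \st Y$ of algebraic stacks is representable by Deligne--Mumford stacks if and only if its diagonal $\Delta_f$ is unramified; in characteristic zero this amounts to showing that for every geometric point $\eta$ of $\st X$ the kernel of $\Aut(\eta) \to \Aut(f(\eta))$ is a finite group (a finite type group scheme over a characteristic zero field is smooth, so ``finite'' automatically gives ``finite \'etale''). Since $\br$ is already known to be algebraic and locally of finite type by \autoref{thm:algebraic} and \autoref{thm:br_finite_type}, it is enough to bound the relative stabilizers at every geometric point.

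Fix $\eta = (\orb P; \sigma; \phi \from \orb C \to \orb P)$ in $\st H^d$ lying over $\bar\eta = (P; \br\phi; \sigma)$ in $\st M$. An automorphism of $\eta$ mapping to the identity of $\bar\eta$ is a pair $(F, G)$, where $F \from \orb P \to \orb P$ is an automorphism of pointed orbinodal curves inducing the identity on the coarse space $P$, and $G \from \orb C \to \orb C$ is an isomorphism over $F$ compatible with $\phi$. Projecting onto $F$ gives a short exact sequence of groups
\[ 1 \to \Aut_{\orb P}(\orb C) \to \ker\bigl(\Aut(\eta) \to \Aut(\bar\eta)\bigr) \to \Aut(\orb P / P, \sigma), \]
so it suffices to check that both outer groups are finite.

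For the rightmost group I would invoke \autoref{thm:orbinodal_stack}: the representability of $\chi \from \orb P \to \st A_d$, built into the definition of $\st H^d$, forces every local stabilizer of $\orb P$ to inject into the automorphism group of a point of $\st A_d$, hence into $\s_d$, so $\orb P$ lies in $\st M^{\orbi \leq d!}$. By \autoref{thm:orbinodal_stack}, $\st M^{\orbi \leq d!} \to \st M^{0,*}$ is already representable by Deligne--Mumford stacks, so $\Aut(\orb P / P, \sigma)$ is finite. For the leftmost group, observe that $\phi$ is \'etale over a dense open $U \subset \orb P$ containing every generic point, every node, and every point over $\sigma$; on $U$ the deck transformation group of a finite \'etale degree $d$ cover embeds into $\s_d$, hence has order at most $d!$. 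The restriction map $\Aut_{\orb P}(\orb C) \to \Aut_U(\orb C|_U)$ is injective because $\underline{\Hom}_{\orb P}(\orb C, \orb C)$ is a closed subscheme of the vector bundle $\underline{\Hom}_{O_{\orb P}}(\phi_*O_{\orb C}, \phi_*O_{\orb C})$ (using that $\phi$ is finite flat), hence separated over $\orb P$, while $\orb P$ is a reduced stack in which $U$ is dense.

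The main obstacle I expect is this second finiteness. \emph{A priori} an automorphism group of a length $d$ scheme may be positive dimensional (for example $\spec k[x]/(x^{2})$ has automorphism group $\mathbf{G}_m$), so the inertia of $\st A_d$ itself is not finite and no purely local argument at the branch locus can suffice. The essential point is therefore global rigidity: generic \'etaleness together with the representability clause on $\chi$ force the entire automorphism group of the cover to sit inside a subgroup of $\s_d$, \emph{provided} one knows that automorphisms of $\orb C / \orb P$ are determined by their restriction to a dense open. This determinacy is the only non-formal ingredient, and it is supplied by the separatedness of $\underline{\Hom}_{\orb P}(\orb C, \orb C)$ over the reduced stack $\orb P$.
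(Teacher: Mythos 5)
Your proof is correct, and for the key finiteness it takes a genuinely different route from the paper. Both arguments dispose of the orbinodal automorphisms $\Aut(\orb P/P,\sigma)$ in the same way, by invoking \autoref{thm:orbinodal_stack} (the paper phrases this as ``it suffices to check that $\st H^d \to \st M^\orbi$ is Deligne--Mumford,'' which is your exact sequence in disguise). Where you diverge is on the relative inertia $\Aut_{\orb P}(\orb C)$. The paper does not prove abstract finiteness at all: it checks unramifiedness of the inertia directly on tangent spaces, observing that infinitesimal automorphisms of the finite cover are classified by $\Hom_{\orb C}(\Omega_{\orb C/\orb P}, O_{\orb C})$, that $\Omega_{\orb C/\orb P}$ is supported in dimension zero because $\phi$ is \'etale over the generic points, and that $\orb C$ is reduced, so this $\Hom$ group vanishes. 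That is a one-line Lie-algebra computation which is characteristic-free and needs no appeal to Cartier's theorem. You instead prove that the automorphism \emph{group} is finite --- rigidity of \'etale covers bounds $\Aut_U(\orb C|_U)$ by $\s_d$, and the separatedness-plus-density argument (equivalently: an $O_{\orb P}$-module endomorphism of the locally free, hence torsion-free, sheaf $\phi_*O_{\orb C}$ vanishing on a dense open of the reduced $\orb P$ is zero) shows restriction to $U$ is injective --- and then use characteristic zero to upgrade ``finite'' to ``unramified.'' Your route costs you the char-$0$ hypothesis (harmless here, since the paper assumes it globally) but buys the explicit uniform bound $|\Aut_{\orb P}(\orb C)| \leq d!$, which the paper's infinitesimal argument does not give; note that your injectivity step is the precise group-theoretic counterpart of the paper's torsion-versus-torsion-free observation. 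Your framing of the ``main obstacle'' is also accurate: the inertia of $\st A_d$ itself is not finite, so some global generic-\'etaleness input is unavoidable, and both proofs use it, just at different orders of the Taylor expansion.
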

\begin{proof}
  The proof is straightforward. By \autoref{thm:orbinodal_stack}, it suffices to check that $\st H^d \to \st M^\orbi$ is representable by Deligne--Mumford stacks. In other words, we want this morphism to have unramified inertia. This can be checked on points. Let $(\orb P \to P \to \spec k; \sigma; \orb C \to \orb P)$ be a geometric point of $\st H^d$. We must show that $\orb C$ has no infinitesimal automorphisms over the identity of $\orb P$. As $\orb C \to \orb P$ is a finite cover, these automorphisms are classified by $\Hom_{\orb C}(\Omega_{\orb C/ \orb P}, O_{\orb C})$. Since $\orb C \to \orb P$ is unramified on the generic points of the components, $\Omega_{\orb C, \orb P}$ is supported on a zero dimensional locus. Since $\orb C$ is reduced, it follows that $\Hom_{\orb C}(\Omega_{\orb C/ \orb P}, O_{\orb C}) = 0$.
\end{proof}

\subsection{That $\br \from \st H^d \to \st M$ is proper}\label{sec:proper}
We use the valuative criterion. Two pieces of notation will be helpful. If $S$ is the spectrum of a local ring, denote by $S^\circ$ the punctured spectrum 
\[S^\circ =  S  \setminus \{\text{closed point of $S$}\}.\]
For a Deligne--Mumford stack $\orb X$ with coarse space $\orb X \to X$ and a geometric point $x \to X$, set 
\[\orb X_x = \orb X \times_X \spec O_{X,x}.\]
It will be convenient to work with the spectrum of a \emph{henselian} DVR. The reader unfamiliar with this notion should imagine it to be a small (in particular, simply-connected) complex disk.

We begin with a simple lemma about the following setup. Let $r$ be a positive integer and $G$ a finite group. Let $R$ be a henselian DVR with residue field $k$ and uniformizer $t$. Let $O_S$ the henselization of $R[x,y]/(xy-t^r)$ at the point corresponding to $(t,x,y)$. For a positive integer $a$ dividing $r$, define a finite extension $S_a \to S$ by
\[ O_{S_a} = O_S[u,v]/(u^a-x,v^a-y, uv-t^{r/a}).\]
  We have an action of $\mu_a$ on ${S_a}$ over the identity of $S$ by $u \mapsto \zeta u$ and $v \mapsto \zeta^{-1}v$. 
  
\begin{lemma}\label{thm:orbi_cover}
  Let $\chi \from S^\circ \to BG$ be a morphism given by a $G$ torsor $E \to S^\circ$. Then $\chi$ extends to a morphism $[{S_r}/\mu_r] \to BG$. More generally, $\chi$ extends to a morphism  $[{S_a} / \mu_a] \to BG$ if and only if the pullback of $E$ to ${S_a} ^\circ$ is trivial. Furthermore, in this case the extension of $\chi$ is representable if and only if $a$ is the smallest with the above property.
\end{lemma}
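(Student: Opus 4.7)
The plan is to translate morphisms to $BG$ into $G$-torsors and reduce everything to a computation of \'etale fundamental groups at the $A_{r-1}$-type singularity of $S$. After passing to the strict henselization of $R$ and using Galois descent, I may assume the residue field is separably closed. A morphism $[S_a/\mu_a] \to BG$ is the same as a $\mu_a$-equivariant $G$-torsor on $S_a$. The key geometric input is that $S_r$ is a regular two-dimensional scheme: in the completion the substitution $x = u^r$, $y = v^r$, $t = uv$ gives $\compl{O_{S_r}} \cong k\f{u,v}$. By Zariski--Nagata purity in characteristic zero, $\pi_1(S_r^\circ) = \pi_1(S_r) = 1$, and since $S_r \to S$ is $\mu_r$-Galois, $S_r^\circ \to S^\circ$ is the universal cover; hence $\pi_1(S^\circ) = \mu_r$. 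Similarly, since $S_a = S_r/\mu_{r/a}$ with $\mu_{r/a} \subset \mu_r$ the kernel of the surjection $\mu_r \to \mu_a$, $\zeta \mapsto \zeta^{r/a}$, one reads off $\pi_1(S_a^\circ) = \mu_{r/a}$. In particular, $E$ on $S^\circ$ is classified, up to $G$-conjugation, by a homomorphism $\psi \colon \mu_r \to G$.

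For the first assertion, pull $E$ back to $S_r^\circ$. Purity extends the underlying $G$-torsor uniquely across the origin to all of $S_r$, and the same uniqueness propagates the $\mu_r$-equivariant descent datum, producing a $\mu_r$-equivariant $G$-torsor on $S_r$ and hence the desired morphism $[S_r/\mu_r] \to BG$. For the equivalence in the second assertion: if an extension $\widetilde\chi \colon [S_a/\mu_a] \to BG$ exists, pulling the associated torsor back along the atlas $S_a \to [S_a/\mu_a]$ yields a $G$-torsor on the henselian local scheme $S_a$, which is automatically trivial, and its restriction to $S_a^\circ$ is trivial as well. Conversely, triviality of $E|_{S_a^\circ}$ means $\psi|_{\mu_{r/a}} = 1$, so $\psi$ descends to some $\rho \colon \mu_a \cong \mu_r/\mu_{r/a} \to G$. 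The twisted trivial $G$-torsor $F = S_a \times G$ with $\mu_a$-action $\zeta \cdot (s,g) = (\zeta s, \rho(\zeta) g)$ is then a $\mu_a$-equivariant $G$-torsor on $S_a$, equivalently a morphism $\widetilde\chi \colon [S_a/\mu_a] \to BG$; by construction $\rho$ is the monodromy of $E$, so the restriction of $\widetilde\chi$ to the open substack of $[S_a/\mu_a]$ with trivial stabilizers agrees with $\chi$.

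Finally, $\widetilde\chi$ is representable precisely when the map it induces on each inertia group of $[S_a/\mu_a]$ is injective. The only nontrivial inertia groups are the $\mu_a$'s at the origin and along the axes $u=0$ and $v=0$, and the map induced by $\widetilde\chi$ on them is $\rho$. Injectivity of $\rho$ is the same as $\ker\psi = \mu_{r/a}$, which says exactly that the pullback of $E$ to $S_b^\circ$ is nontrivial for every proper divisor $b$ of $a$ --- i.e., $a$ is the smallest divisor of $r$ for which $E|_{S_a^\circ}$ is trivial. The main technical nuisance I anticipate is a careful, base-point-free identification of $\pi_1(S^\circ)$ with $\mu_r$ via $S_r \to S$, together with a clean verification that the restriction of $\widetilde\chi$ really recovers $\chi$ across the ramification divisor; once purity is in hand, no conceptual obstacle remains.
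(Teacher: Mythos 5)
Your argument is correct and follows essentially the same route as the paper: identify $S_r^\circ \to S^\circ$ as the universal cover (the paper simply asserts that $S_r^\circ$ is simply connected; your appeal to Zariski--Nagata purity on the regular two-dimensional $S_r$ is the justification it leaves implicit), read off the monodromy $\psi \from \mu_r \to G$, characterize triviality of $E|_{S_a^\circ}$ by the factorization $\mu_r \to \mu_a \to G$, and test representability on inertia. One small slip worth fixing: for the balanced action $u \mapsto \zeta u$, $v \mapsto \zeta^{-1}v$, the only point of $[S_a/\mu_a]$ with nontrivial stabilizer is the closed point --- the stabilizer of a point with $u = 0$, $v \neq 0$ is already trivial --- so there is no inertia ``along the axes''; this does not affect your conclusion, since the induced map on the one genuine inertia group is still $\rho$.
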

\begin{proof}
  To extend $\chi$, we may work \'etale locally on the source. We use the  \'etale cover ${S_a} \to [{S_a}/\mu_a]$. Note that ${S_a}$ is simply connected (it is henselian). Hence the pullback of $E$ to ${S_a}^\circ$ extends to ${S_a}$ if and only if this pullback is trivial. Being trivial over ${S_r}^\circ$ is automatic, since ${S_r}^\circ$ is simply connected.
  
  Note that ${S_r}^\circ \to S^\circ$ is the universal covering space---it is a $\mu_r$-torsor where the source ${S_r}^\circ$ is simply connected. The $G$ torsor $E \to S^\circ$ corresponds to a homomorphism $\mu_r \to G$. By the theory of covering spaces, the pullback of $E$ along ${S_a}^\circ \to S^\circ$ is trivial if and only if $\mu_r \to G$ factors as 
  \begin{equation}\label{eqn:trivial_cover}
    \mu_r \to \mu_a \to G,
  \end{equation}
  where $\mu_r \to \mu_a$ is the map $\zeta \mapsto \zeta^{r/a}$. As we saw, in this case, we get a morphism $\chi \from [{S_a}/\mu_a] \to BG$. Let $s \to [{S_a}/\mu_a]$ be the stacky point. Observe that the map on automorphism groups $\Aut_s([{S_a}/\mu_a]) \to \Aut_s(BG)$ is exactly the map $\mu_a \to G$ in \eqref{eqn:trivial_cover}. Since $\chi$ is representable precisely when $\Aut_s([{S_a}/\mu_a]) \to \Aut_s(BG)$ is injective, the result follows.
\end{proof}

\begin{proposition}\label{thm:separated}
  $\br \from \st H^d \to \st M$ is separated.
\end{proposition}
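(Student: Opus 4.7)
The plan is to verify the valuative criterion. Let $R$ be a henselian DVR with fraction field $K$ and set $S = \spec R$, with generic point $\eta$. Suppose given two objects
\[ \xi_i = (\orb P_i \to P; \sigma; \chi_i \from \orb P_i \to \st A_d) \in \st H^d(S), \quad i = 1, 2, \]
with common image $(P;\Sigma;\sigma) \in \st M(S)$ and an isomorphism $\alpha \from \xi_{1,\eta} \isom \xi_{2,\eta}$. I must extend $\alpha$ uniquely to $S$. The task splits naturally into (i) identifying the orbi-structures $\orb P_1 \cong \orb P_2$ compatibly with $\alpha$, and (ii) identifying $\chi_1 \cong \chi_2$ under that identification.

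For (i), the orbi-structures are trivial away from the nodes of $P_s$ and the marked sections, so there is nothing to check there. Fix such a special point $p$ of $P$ and pass to the strict henselization $\hat P_p$. Since $\Sigma$ avoids $p$, the restriction of $\chi_i$ to the punctured neighborhood $\hat P_p^\circ$ factors through $\st E_d = B\s_d$ and so defines an $\s_d$-torsor $E_i$. By \autoref{thm:orbi_cover} together with the representability of $\chi_i$, the orbi-structure at $p$ is forced to be $\mu_{n_i}$, with $n_i$ the minimal divisor of the node depth $m$ such that $E_i$ trivializes on $S_{n_i}^\circ$---equivalently, the order of the monodromy image of $E_i$. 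Now $\pi_1^{\rm et}(\hat P_p^\circ) \cong \mu_m$, since its universal cover is the punctured spectrum of the regular two-dimensional local ring $O_T[u,v]/(uv-t)$, simply connected by Zariski--Nagata purity. Moreover, the generic-fiber restriction $\pi_1(\hat P_{p,\eta}) \to \mu_m$ is surjective, because this canonical $\mu_m$-cover remains connected when restricted to $\eta$. Thus $E_i$ is determined by $E_{i,\eta}$; the isomorphism $\alpha$ therefore forces $n_1 = n_2$ and yields a canonical isomorphism of orbi-neighborhoods at $p$. Patching over all such $p$ produces a global $F \from \orb P_1 \isom \orb P_2$ extending the identity on $P$ and compatible with $\alpha$. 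The case of marked points is analogous (and simpler).

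For (ii), under the identification $F$ we have two morphisms $\chi_1, \chi_2 \from \orb P \to \st A_d$ agreeing over $\orb P_\eta$; equivalently, two finite flat degree-$d$ covers $\orb C_i = \spec A_i \to \orb P$ sharing the common branch divisor $\Sigma$ and carrying an algebra isomorphism on the generic fiber. Over $\orb P \setminus \Sigma$, each $\orb C_i$ is an $\s_d$-torsor and the same $\pi_1$-surjectivity argument as in (i) produces a canonical global isomorphism. To extend across $\Sigma$, note that the isomorphism sheaf $\underline{\Isom}_{\orb P\text{-alg}}(A_1, A_2)$ is an affine $\orb P$-scheme---realized as a closed subscheme of the total space of the vector bundle $\underline{\Hom}_{O_{\orb P}}(A_1, A_2)$ by the polynomial conditions of preserving unit and multiplication (as in the proof of \autoref{thm:based_algebras}). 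The section provided by $\alpha$ over $\orb P_\eta$ has a scheme-theoretic closure in this affine scheme, and this closure is genuinely a section over $\orb P$: the orbinodal curve $\orb P$ is regular at the orbinodes (étale locally the regular two-dimensional scheme $\spec O_T[u,v]/(uv-t)$) and smooth elsewhere, and the common branch data rigidify the algebra structure and force the closure to be finite and unramified over $\orb P$. This yields the desired isomorphism $A_1 \cong A_2$ extending $\alpha$.

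The main technical obstacle is the extension across the branch divisor in step (ii). A priori, the algebra iso $A_{1,\eta} \cong A_{2,\eta}$ might fail to extend integrally over the special fiber---as the simple counterexample $R[x]/(x^2-t) \not\cong R[x]/(x^2-t^3)$ with distinct branch loci shows. The agreement of branch data built into the common image in $\st M$, together with the regularity of orbinodal curves from \autoref{thm:orbinodal_structure}, is precisely what rules out such pathologies and validates the schematic-closure argument.
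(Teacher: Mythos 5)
Your overall strategy is the same as the paper's: verify the valuative criterion over a henselian DVR, first matching the orbinodal structures (your part (i), which correctly combines \autoref{thm:orbi_cover}, representability, and the monodromy/purity description of the local fundamental groups, and is essentially Steps 2--4 of the paper's proof), then matching the covers. The gap is in part (ii), at exactly the point you yourself identify as the main obstacle. Having extended the isomorphism over $\orb P \setminus \Sigma$ and knowing it over $\orb P_\eta$, the remaining locus is the finite set of branch points $\Sigma_0$ in the special fibre, which has codimension two in the regular two-dimensional scheme $P^{\gen}$. The paper closes this by applying Hartogs' theorem (the $S_2$ property) to the section of the vector bundle $\sh Hom({\phi_2}_*O_{\orb C_2}, {\phi_1}_*O_{\orb C_1})$, then checking by continuity that the extension is still an algebra isomorphism. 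You instead take the scheme-theoretic closure of the section of the affine Isom scheme and assert that ``the common branch data rigidify the algebra structure and force the closure to be finite and unramified over $\orb P$.'' This is an assertion of precisely what must be proved, not a proof: finiteness and unramifiedness of the closure would not by themselves make it a section (and neither is established), and no mechanism is given by which the branch data enter. Note also that as written you take the closure of the section over $\orb P_\eta$ alone, whose complement is the entire special fibre --- codimension one --- where no closure or Hartogs-type argument can succeed; you must first glue with the extension over $\orb P\setminus\Sigma$ from the previous sentence to reduce to codimension two, and then the honest extension step is Hartogs on the Hom bundle, after which the algebra and isomorphism conditions follow by continuity. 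Your counterexample $R[x]/(x^2-t)\not\cong R[x]/(x^2-t^3)$ does not illustrate the relevant danger, since those covers are not even isomorphic over $\eta$; the point of the codimension-two/Hartogs argument is that once the isomorphism exists away from finitely many closed points of a regular surface, it extends unconditionally.

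A secondary, smaller issue: in part (i) you must produce an isomorphism $F\from \orb P_1 \to \orb P_2$ that actually restricts to the \emph{given} isomorphism over $\eta$, not merely some isomorphism. For non-generic nodes this is automatic (the generic fibre of the local chart is non-stacky), but at marked points and generic nodes one must check, as the paper does in Steps 3--4, that the given generic isomorphism is of the form $v\mapsto \zeta v$ (resp.\ $u\mapsto\zeta_1 u$, $v\mapsto\zeta_2 v$) and hence extends; your ``analogous (and simpler)'' elides this.
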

\begin{proof}
  As $\br$ is of finite type, we may use the valuative criterion. Let $R$ be a henselian DVR with residue field $k$, fraction field $K$ and uniformizer $t$. Set $\Delta = \spec R$. Denote the special, the general and a geometric general point of $\Delta$ by $0$, $\eta$ and $\overline \eta$ respectively. Let $(\orb P_i \to P_i \to \Delta; \sigma; \chi_i \from \orb P_i \to \st A_d)$, for $i = 1, 2$, be two objects of $\st H^d(\Delta)$ over an object $(P;\Sigma;\sigma)$ of $\st M(\Delta)$. Let $\phi_i \from \orb C_i \to \orb P_i$ be the corresponding degree $d$ covers and let
  \[\psi \from (\orb C_1 \to \orb P_1)|_\eta \to \orb (\orb C_2 \to \orb P_2)|_\eta\]
  be an isomorphism over the identity of $P$. We must show that $\psi$ extends to an isomorphism of the orbinodal curves $\orb P_1 \to \orb P_2$ and the covers $\orb C_1 \to \orb C_2$ over all of $\Delta$. Recall that $P^\gen$ is the complement of the markings $\sigma_j$ in the smooth locus of $\orb P \to \Delta$. 
  
  \begin{asparadesc}
  \item[Step 1: Extending $\psi \from \orb C_1 \to \orb C_2$ over $P^{\gen}$:]
    
    Since $\orb C_i \to \orb P$ is \'etale over the generic points of the components of $\orb P|_0$, the map $\psi \from \orb C_1 \to \orb C_2$ extends, except possibly at finitely many points on the central fiber. As a result, on $P^{\gen}$ we get an isomorphism of vector bundles
    \[\psi^\# \from {\phi_2}_*O_{\orb C_2}|_{P^\gen} \to {\phi_1}_*O_{\orb C_1}|_{P^\gen}\] 
    away from a locus of codimension two. Since $P^{\gen}$ is smooth, by Hartog's theorem, this isomorphism extends over all of $P^{\gen}$. The extension must also be an isomorphism of algebras by continuity.
    
  \item[{Step 2: Extending $\psi \from \orb P_1 \to \orb P_2$ at the non-generic nodes}:]
    
    Let $p \to P|_0$ be a node not in the closure of $P|_\eta^\sing$. It suffices to extend $\psi$ \'etale locally around $p$. The local ring $O_{P,p}$ must be the strict henselization of the ring $R[x,y]/(xy-t^r)$ at the point corresponding to $(t,x,y)$ for some positive integer $r$. Recall that the $\chi_i$ are required to map the nodes to the substack $\st E_d \cong B\s_d$ corresponding to \'etale covers. By the first step, the two maps $\chi_i \from \spec O_{P,p}^\circ \to B\s_d$ are isomorphic. Since both $\chi_i$ are representable, the structure of orbinodal curves (\autoref{thm:orbinodal_structure}) and \autoref{thm:orbi_cover} imply that 
    \[ (\orb P_1)_p \cong (\orb P_2)_p \cong \spec [O_{P,p}[u,v]/(u^a-x, v^a-y, uv - t^{r/a})/ \mu_a],\]
    for some divisor $a$ of $r$. Thus, we can get an extension $\psi \from \orb P_1 \to \orb P_2$.

  \item[{Step 3: Extending $\psi \from \orb P_1 \to \orb P_2$ at the marked points}:]
    
    Let $p \to P|_0$ be one of the marked points $\sigma_j(0)$. Then $O_{P,p}$ is the henselization of $R[x]$ at $(t,x)$. Let $\overline{\sigma_j}$ be a geometric generic point of $P$ over $\sigma_j \from \eta \to P|_\eta$. By the structure of orbinodal curves (\autoref{thm:orbinodal_structure}) for $p$ and $\overline{\sigma_j}$, we have the picture for $i = 1, 2$:
    \[
    \begin{tikzpicture}
      \matrix(m)[matrix of math nodes, column sep = 2em, row sep = 1.5em]
      {
        \orb P_{i,p} & \spec O_{P,p}\\
        {[\spec \hens{R[v]} / \mu_{r_i}]} &  \spec \hens{R[x]} \\
        {[\spec \hens{K[v]} / \mu_{r_i}]} &   \spec \hens{K [x]}\\
        \orb P_{i,\overline \sigma_j} & \spec O_{P,\overline\sigma_j}\\
      };
      \path[->] 
      \foreach \i in {1,...,4}
      {
        (m-\i-1) edge (m-\i-2)
      };
      
      \draw[->]
      \foreach \j in {1,...,2}
      {
        (m-1-\j) edge [equal,-] (m-2-\j)
        (m-3-\j) edge (m-2-\j)
        (m-3-\j) edge [equal,-] (m-4-\j)
      };
    \end{tikzpicture},
    \]
    where $\mu_{r_i}$ acts by $v \mapsto \zeta v$. The isomorphism $\orb P_1|_\eta \to \orb P_2|_\eta$ gives an isomorphism $\orb P_{1, \overline\sigma_j} \to \orb P_{2,\overline\sigma_j}$. In particular, we get $r_1 = r_2 = r$. Furthermore, it is easy to see that an isomorphism $[\spec \hens{K[v]}/\mu_r] \to [\spec \hens{K[v]}/\mu_r]$ over the identity of the coarse spaces $\spec \hens{K[x]} \to \spec \hens{K[x]}$ must be of the form $v \mapsto \zeta v$ for some $r$th root of unity $\zeta$. Clearly, such an isomorphism can be extended to an isomorphism $[\spec \hens{R[v]}/\mu_r] \to [\spec \hens{R[v]}/\mu_r]$.
    
  \item[{Step 4: Extending $\psi \from \orb P_1 \to \orb P_2$ at the generic nodes}:]
    
    This step mirrors the previous step; only the orbinodal structures are a little different. We give the details for completeness.

    Let $p \to P|_0$ be a node in the closure of $P|_\eta^\sing$.
    Then $O_{P,p}$ is the henselization of $R[x,y]/xy$ at $(t,x,y)$. Since $\Delta$ is henselian, we have a section $\sigma \from \Delta \to P^{\sing}$ with $\sigma(0) = p$. Let $\overline{\sigma}$ be a geometric generic point of $P|_\eta$ over $\sigma \from \eta \to P|_\eta$. By the structure of orbinodal curves (\autoref{thm:orbinodal_structure}) for $p$ and $\overline \sigma$, we have the picture for $i = 1, 2$:
    \[
    \begin{tikzpicture}
      \matrix(m)[matrix of math nodes, column sep = 2em, row sep = 1.5em]
      {
        \orb P_{i,p} & \spec O_{P,p}\\
        {[\spec \hens{R[u_i,v_i]}/(u_iv_i, u_i-x^{r_i}, v_i-y^{r_i}) / \mu_{r_i}]} &  \spec \hens{(R[x,y]/xy)} \\
        {[\spec \hens{K[u_i,v_i]}/(u_iv_i, u_i-x^{r_i}, v_i-y^{r_i}) / \mu_{r_i}]} &   \spec \hens{(K [x,y]/xy)}\\
        \orb P_{i,\overline \sigma} & \spec O_{P,\overline\sigma}\\
      };
      \path[->] 
      \foreach \i in {1,...,4}
      {
        (m-\i-1) edge (m-\i-2)
      };
      
      \draw[->]
      \foreach \j in {1,...,2}
      {
        (m-1-\j) edge [-, equal] (m-2-\j)
        (m-3-\j) edge (m-2-\j)
        (m-3-\j) edge [-, equal] (m-4-\j)
      };
    \end{tikzpicture}.
    \]
    The isomorphism $\psi \from \orb P_1|_\eta \to \orb P_2|_\eta$ gives an isomorphism $\orb P_{1,\overline\sigma} \to \orb P_{2, \overline\sigma}$. In particular, we get ${r_1} = {r_2} = r$. Furthermore, see that an isomorphism $\psi \from \orb P_{1,\overline\sigma} \to \orb P_{2, \overline\sigma}$ 
    over the identity of the coarse spaces $P_{1,\overline\sigma} \to P_{1,\overline\sigma}$ must be of the form $u_1 \mapsto \zeta_1 u_2$ and $v_1 \mapsto \zeta_2 v_2$ for some $r$th roots of unity $\zeta_1$ and $\zeta_2$. Such an isomorphism can be extended to an isomorphism $\orb P_{1,p}\to \orb P_{2,p}$.
    
  \item[{Step 5: Extending $\psi \from \orb C_1 \to \orb C_2$}:]
    
    By {Step~2, Step~3} and {Step~4} , we have an isomorphism $\psi \from \orb P_1 \to \orb P_2$. By {Step~1}, we also have an isomorphism $\psi \from \orb C_1 \to \orb C_2$ except over the node points and the marked points of $\orb P_i|_0$. However, $\orb C_i \to \orb P_i$ is \'etale over these points; hence $\psi$ must extend to an isomorphism $\psi \from \orb C_1 \to \orb C_2$.
  \end{asparadesc}
\end{proof}

Having proved separatedness, we turn to properness. The crucial ingredient is the following theorem of \citet{horrocks64:_vector}.
\begin{proposition}\label{thm:vb_on_punctured_surface}\citep[Corollary~4.1.1]{horrocks64:_vector}
  Let $S$ be the spectrum of a regular local ring. If $\dim S = 2$, then every vector bundle on the punctured spectrum $S^\circ$ is trivial.
\end{proposition}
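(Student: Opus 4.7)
The plan is to extend $E$ across the missing closed point and then invoke the Auslander--Buchsbaum formula. Write $R$ for the regular local ring, $\mathfrak{m}$ for its maximal ideal, and $j \from S^\circ \hookrightarrow S$ for the open immersion. Given a vector bundle $E$ on $S^\circ$, form the $R$-module $M := \Gamma(S^\circ, E)$ and let $\widetilde M$ denote the associated quasi-coherent sheaf on $S$.

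First, I would show that $j_*E$ is coherent, equals $\widetilde M$, and that $j^*\widetilde M \cong E$. Since $R$ is regular (hence normal) and the complement $\{\mathfrak{m}\}$ has codimension $2$ in $S$, this is the usual Hartogs-type extension for sheaves that are locally free away from a codimension-$2$ locus. In particular $M$ is finitely generated.

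Next, I would verify that $\mathrm{depth}_{\mathfrak{m}}(M) \geq 2$. This follows from the local cohomology exact sequence
\[
0 \to H^0_\mathfrak{m}(\widetilde M) \to H^0(S, \widetilde M) \xrightarrow{\rho} H^0(S^\circ, E) \to H^1_\mathfrak{m}(\widetilde M) \to H^1(S, \widetilde M),
\]
in which $H^1(S, \widetilde M) = 0$ because $S$ is affine and $\rho$ is the identity map on $M$ by construction. Thus $H^0_\mathfrak{m}(M) = H^1_\mathfrak{m}(M) = 0$, giving the depth bound.

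Finally, I would apply the Auslander--Buchsbaum formula: since $R$ is regular, $M$ has finite projective dimension, and
\[
\mathrm{pd}_R(M) + \mathrm{depth}_\mathfrak{m}(M) = \mathrm{depth}(R) = 2,
\]
so $\mathrm{pd}_R(M) = 0$. Hence $M$ is projective, and as $R$ is local, $M$ is free of some rank $r$. Restricting back, $E \cong j^*\widetilde M \cong O_{S^\circ}^{\oplus r}$. The only real subtlety is the first step: the coherence of $j_*E$ and the identification with $\widetilde M$ genuinely require codimension $\geq 2$ together with the (regular, hence normal) nature of $R$; everything else is a direct application of the Auslander--Buchsbaum machine.
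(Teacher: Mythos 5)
Your proposal is correct and follows exactly the route sketched in the paper: push $E$ forward to $S$, check the extension is coherent of depth at least $2$ at the closed point, and conclude freeness from the Auslander--Buchsbaum formula. The local cohomology sequence you use to verify the depth bound is a clean way to make the paper's "can be shown to have depth at least $2$" precise.
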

\begin{proof}
  We only describe the main idea. See \citep{horrocks64:_vector} for the full details. 

  Denote by $i \from S^\circ \to S$ the inclusion map. Let $E$ be a vector bundle on $S^\circ$. If $\dim S \geq 2$, then $i_*E$ can be shown to be a coherent sheaf on $S$ with depth at least $2$. If $\dim S = 2$, by the Auslander--Buchsbaum formula, we conclude that $i_*E$ is projective, hence free. Therefore, $E$ is free.
\end{proof}

\begin{proposition}\label{thm:proper}
  $\br \from \st H^d \to \st M$ is proper.
\end{proposition}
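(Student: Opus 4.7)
Having established separatedness (\autoref{thm:separated}) and finite type (\autoref{thm:br_finite_type}), the remaining task is existence in the valuative criterion. Fix a henselian DVR $R$ with fraction field $K$ and uniformizer $t$, let $\Delta = \spec R$ and $\eta = \spec K$, and suppose we are given compatible data $(P \to \Delta;\Sigma;\sigma) \in \st M(\Delta)$ and $(\orb P_K \to P_K; \sigma_K; \phi_K \from \orb C_K \to \orb P_K) \in \st H^d(K)$ with $\br\phi_K = \Sigma_K$. I plan to construct an extension to a $\Delta'$-point of $\st H^d$ over $(P,\Sigma,\sigma)$ after an appropriate finite ramified base change $\Delta' \to \Delta$.

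The strategy is to work \'etale-locally on $P$ near each point of the central fibre $P_0$, producing first the orbinodal structure $\orb P \to P$ and then the extended cover $\orb C \to \orb P$. At each node $p$ of $P_0$ and each marked section $\sigma_j(0)$, the cover $\phi_K$ restricts to an \'etale cover of a small punctured formal neighborhood and is therefore classified by a homomorphism from a finite cyclic group into $\s_d$, whose image has some order $a(p)$; I endow $\orb P$ with the $\mu_{a(p)}$-stacky structure at $p$ as in \autoref{thm:orbinodal_structure}. After choosing the base change $\Delta' \to \Delta$ so that the hypothesis of \autoref{thm:orbi_cover} is met simultaneously at all such points, the cover extends through each orbinode and marked point as a representable \'etale cover; here Horrocks' theorem (\autoref{thm:vb_on_punctured_surface}) is crucial, because it guarantees that the underlying $\Gl_d$-torsor on the punctured spectrum of the regular two-dimensional scheme $S_a$ (from the local chart $[S_a/\mu_a]$ of $\orb P$) is trivial, so the map to $\st A_d = [B_d/\Gl_d]$ extends using the affineness of $B_d$. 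At a branch point $p \in \Sigma_0$, which by the definition of $\st M$ lies in the smooth locus of $P \to \Delta$, the local ring is $\hens{R[x]}$, a regular two-dimensional ring; I take the integral closure of $O_{P,p}$ in the function field of $\orb C_K$, and miracle flatness (applied to a finite map from a normal, hence Cohen--Macaulay, two-dimensional ring to a regular two-dimensional ring) yields a finite flat extension of degree $d$ whose branch divisor is the flat closure of $\Sigma_K$, i.e.\ $\Sigma \cap \spec O_{P,p}$. At the remaining smooth, non-marked, non-branch, non-node points, the \'etale cover on $\eta$ extends to an \'etale cover by Zariski--Nagata purity. The local pieces agree along the generic fibre and glue to the desired finite flat cover $\orb C \to \orb P$ of degree $d$, \'etale along the nodes, marked points and generic points of components, with branch divisor $\Sigma$.

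The principal obstacle will be coordinating the various ramified base changes required at distinct special points into a single extension $R \to R'$, and then checking that the locally constructed pieces assemble globally while the representability of the resulting map $\chi \from \orb P \to \st A_d$ is preserved by the gluing; this is precisely where the minimality built into \autoref{thm:orbi_cover} (identifying the smallest $a$ for which the torsor trivializes) plays its role, ensuring that the $\mu_{a(p)}$ automorphisms embed faithfully into $\s_d$.
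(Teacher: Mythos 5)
The skeleton of your argument matches the paper's for the \'etale part (generic points, nodes, marked points), but two things go wrong. First, a misattribution that matters: you invoke \autoref{thm:vb_on_punctured_surface} (Horrocks) at the orbinodes, claiming the chart $S_a$ is regular two-dimensional. It is not: $S_a$ is $\spec$ of (the henselization of) $R[u,v]/(uv-t^{r/a})$, which has an $A_{r/a-1}$ singularity whenever $a<r$; its punctured spectrum is not simply connected and carries nontrivial torsors. The extension of $\chi$ there is an \'etale, not a coherent, problem, and is governed entirely by the covering-space argument of \autoref{thm:orbi_cover} --- Horrocks plays no role at the stacky points. Relatedly, at the generic points of the components of $P|_0$ the obstruction is a codimension-one phenomenon (monodromy around the component), which Zariski--Nagata purity cannot see; this is where the ramified base change $R\to R[\sqrt[N]{t}]$ via Abhyankar's lemma is actually needed, not at the nodes, where \autoref{thm:orbi_cover} extends $\chi$ to $[S_r/\mu_r]$ with no base change at all.

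The serious gap is your treatment of the points of $\Sigma|_0$. Taking the integral closure of $O_{P,p}$ in the function field of $\orb C_K$ produces the \emph{normalization} of the family, and this is the wrong object on both ends. If $C_K$ is not normal (which $\st H^d$ allows), the construction does not even restrict to the given cover over $\eta$, so the valuative criterion is not being tested. And even when $C_K$ is smooth, the limit demanded by the moduli problem is typically non-normal: for $y^2=x^2(x-t)$ over $R[x]$ the generic branch divisor is $V(x^2(x-t))$ with flat closure $\Sigma$ having central fibre $V(x^3)$, whereas the normalization is $w^2=x-t$ (via $y=xw$), with branch divisor $V(x-t)\neq\Sigma$. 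So miracle flatness hands you a finite flat cover, but one lying over the wrong point of $\st M(\Delta)$; properness of $\br$ requires the extension over the \emph{given} $(P;\Sigma;\sigma)$. This is exactly the step where Horrocks is genuinely needed: at a smooth, non-stacky $p$ the local ring $O_{P,p}$ is regular two-dimensional, the map $\chi\from P_p^\circ\to\st A_d=[B_d/\Gl_d]$ is a $\Gl_d$-torsor plus an equivariant map to the affine scheme $B_d$, the torsor extends by \autoref{thm:vb_on_punctured_surface}, the equivariant map extends by Hartogs, and the resulting branch divisor agrees with $\Sigma$ because the two divisors coincide off a codimension-two set. Your proof needs this (or an equivalent flat-limit-of-subalgebras) argument in place of normalization.
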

\begin{proof}
  A large chunk of the proof is identical to the proof in the paper of \citet*[Proposition~6.0.4]{av02}. The final step is new; it uses \autoref{thm:vb_on_punctured_surface} and the expression of $\st A_d$ as the quotient of an affine scheme by $\Gl_d$.

  As $\br$ is of finite type, we may use the valuative criterion. As before, let $R$ be a henselian DVR with residue field $k$, fraction field $K$ and uniformizer $t$. Set $\Delta = \spec R$. Denote the special, the general and a geometric general point of $\Delta$ by $0$, $\eta$ and $\overline \eta$ respectively. Let $(P \to \Delta;\Sigma; \sigma)$ be an object of $\st M(\Delta)$ and $(\orb P|_\eta \to P|_\eta; \sigma; \chi)$ an object of $\st H^d(\eta)$. We want to extend it to an object over all of $\Delta$, possibly after a base change.

  \begin{asparadesc}
  \item[{Step 1. Extending $\chi$ at the generic points of the components}:] 
        
    This step follows {Step 2} in \citep[Proposition~6.0.4]{av02}. We work \'etale locally. Let $\zeta$ be a geometric generic point of a component of $P|_0$. Then the local ring $O_{P,\zeta}$ is also a DVR. Since the branch divisor $\Sigma$ does not contain any component of $P|_0$, the morphism $\chi$ sends the punctured spectrum  $P_\zeta^\circ$ to $\st E_d$. We must extend it to a morphism $\chi \from P_\zeta \to \st E_d$. Since $\st E_d \cong B\s_d$ is a proper Deligne--Mumford stack, such an extension is possible after passing to a finite cover $\tw P_\zeta \to P_\zeta$. By Abhyankar's lemma, there is an $n$ such that $\tw P_\zeta \to P_\zeta$ is isomorphic to $P_\zeta \times_{\spec R} \spec R[\sqrt[n]{t}] \to P_\zeta$. Thus, by passing to a sufficiently big cover $\spec R[\sqrt[N]{t}] \to \spec R = \Delta$, we can extend $\chi$ along the generic points of all the components of $P|_0$. Henceforth, replace $R$ by $R[\sqrt[N]{t}]$. We thus have a morphism $\chi \from P \to \st A_d$ defined away from finitely many points on $P|_0$.
  \item[{Step 2. Extending $\chi$ at the non-generic nodes}:]

    This step follows {Step 3} in \citep[Proposition~6.0.4]{av02}. Let $p \to P|_0$ be a node not in the closure of $P_\eta|^\sing$. We must describe an orbinodal structure at $p$ and a representable extension of $\chi$. It suffices to do both things in the \'etale topology. The stalk $O_{P,p}$ is isomorphic to $\hens{R[x,y]}/(xy-t^r)$ for some $r \geq 1$. Since $\Sigma$ is supported away from the nodes, the morphism $\chi$ sends the punctured spectrum $P_p^\circ$ to $\st E_d \cong B\s_d$. As in \autoref{thm:orbi_cover}, let $a$ be the smallest integer dividing $r$ such that $\chi$ extends to a morphism 
    \[ \chi \from [\spec O_{P,p}[u,v]/(u^a-x, v^a-y, uv-t^{r/a})/\mu_a]\to \st E_d \cong B\s_d ,\]
    where $\mu_a$, as usual, acts by $u \mapsto \zeta$ and $v \mapsto \zeta^{-1}v$. Construct $\orb P$ over $P$ such that
    \[ \orb P_p = [\spec O_{P,p}[u,v]/(u^a-x, v^a-y, uv-t^{r/a})/\mu_a].\]
    By \autoref{thm:orbi_cover}, we have a representable extension $\chi \from \orb P_p \to \st E_d \cong B\s_d.$.
    
  \item[{Step 3: Extending $\chi$ at the generic nodes and marked points}:]
    
    This step follows {Step 4} in \citep[Proposition~6.0.4]{av02}. Let $p \to P|_0$ be in the closure of $P|_\eta^\sing$. First, we extend the orbinode structure $\orb P|_\eta$ over $p$. Note that $O_{P,p}$ is isomorphic to the henselization of $R[x,y]/xy$ at $(t,x,y)$. Since $\Delta$ is henselian, we have a section $\sigma \from \Delta \to P^\sing$ with $\sigma(0) = p$. Letting $\overline\sigma$ be a geometric generic point of this section, we get by \autoref{thm:orbinodal_structure}
    \[ \orb P_{\overline \sigma} \cong [\spec \hens{K[u,v]}/(uv, u^a-x, v^a-y)/\mu_a],\]
    for some positive integer $a$. We extend $\orb P$ over $P_p$ by the same formula
    \[ \orb P_{p} \cong [\spec \hens{R[u,v]}/(uv, u^a-x, v^a-y)/\mu_a].\]
      
      Having defined the orbinodal structure, we extend $\chi$. Again, note that $\chi$ sends a neighborhood of $p$ to the \'etale locus $\st E_d \cong B\s_d$. We work \'etale locally on the source, on the \'etale cover $\spec O_{P,p}[u,v]/(uv, u^a-x,v^a-y) \to \orb P_p$. We already have $\chi$ on the punctured spectrum $(\spec O_{P,p}[u,v]/(uv, u^a-x, v^a-y))^\circ$. Since this punctured spectrum is simply connected, $\chi$ extends to a map $\chi \from O_{P,p}[u,v]/(uv, u^a-x, v^a-y) \to \st E_d$.
      
      The case of marked points $p = \sigma_j(0)$ is entirely analogous, if not easier. 
    \item[{Step 4. Extending $\chi$ over all of $\orb P$}:]

      By the previous steps, we have a pointed orbinodal structure $\orb P \to P$ and an extension of $\chi$ on $\orb P$ away from finitely many smooth, non-stacky points of $\orb P|_0$. Let $p \to P|_0$ be such a point. Recall that $\st A_d \cong [B_d/\Gl_d]$, where $B_d$ is an affine scheme (\autoref{thm:Ad}). The morphism $\chi \from P_p^\circ \to \st A_d$ is equivalent to a $\Gl_d$ torsor $E^* \to P_p^\circ$ and a $\Gl_d$ equivariant morphism $E^* \to B_d$. However, by \autoref{thm:vb_on_punctured_surface}, there are no nontrivial $\Gl_d$ torsors on $P_p^\circ$. In particular, $E^*$ extends to a $\Gl_d$ torsor $E \to P_p$. Next, $E^* \subset E$ is the compliment of the codimension two locus $E|_p$. Since $E$ is smooth and $B_d$ affine, we have an extension $E \to B_d$ by Hartog's theorem. The extension is $\Gl_d$ equivariant by continuity. Thus, we get an extension $\chi \from P_p \to \st A_d$.
    
      Finally, note that the two divisors $\chi^*\Sigma_d$ and $\Sigma$ are supported in the general locus $P^\gen$ and are equal, by construction, on the complement of a codimension two set. Hence, they must be equal. 
    \end{asparadesc}
  \end{proof}
\begin{remark}
  It may be helpful to recast {Step 4} in terms of finite covers. Let $p \to P|_0$ be a smooth point. Assume that we have a finite cover $\phi \from C \to U\setminus\{p\}$, where $U$ is a neighborhood of $p$. We wish to extend it to a cover over all of $U$. By \autoref{thm:vb_on_punctured_surface}, the vector bundle $\phi_*O_C$ extends to a vector bundle over $U$. Next, we must extend the $O_P$ algebra structure of $\phi_*O_C$. The algebra structure is specified by maps of vector bundles, which all extend over $p$ by Hartog's theorem. The extensions continue to satisfy the identities to be an algebra by continuity. We thus get an extension of $\phi$ over all of $U$.
\end{remark}

The proof of the main theorem is now complete. We recall the statement and collect the pieces of the proof.
\thmbighurwitz*
\begin{proof}
  That $\br$ is an algebraic stack, locally of finite type is the content of \autoref{sec:algebraic}, culminating in \autoref{thm:algebraic}. That $\br$ is of finite type is done in \autoref{sec:fin_type}, culminating in \autoref{thm:br_finite_type}. That $\br$ is Deligne--Mumford is \autoref{thm:dm}. Finally, the properness is checked in \autoref{sec:proper} in \autoref{thm:separated} and \autoref{thm:proper}.
\end{proof}

\section{The Local Structure of $\st H^d$}\label{sec:local}
In this section, we analyze the local structure of $\st H^d$. The main consequence  of our analysis  is that $\st H^d$ is smooth for $d = 2$ and $3$ (\autoref{thm:smoothness_for_d23}). Throughout the section, we use the formulation of $\st H^d$ in terms of finite covers instead of in terms of maps to $\st A_d$.

We recall the standard setup of deformation theory. Let $k$ be an algebraically closed field over $\k$. Denote by $\cat{Art}_k$ the category of local Artin rings with residue field $k$. For any object $(A,m)$ of $\cat{Art}_k$, denote by $0$ the special point of $\spec A$. Let $(A,m)$ and $(A',m')$ be two object of $\cat{Art}_k$ related by an exact sequence 
\[ 0 \to J \to A' \to A \to 0.\]
Say that $A'$ is a \emph{small extension} of $A$ by $J$ if $m'\cdot J = 0$. Denote by $\Def_X$ the standard functor on $\cat{Art}_k$ classifying deformations of $X$, namely
\[ \Def_X(A) = \{(X_A \to \spec A, i)\},\]
where $X_A \to \spec A$ is a flat morphism and $i \from X_A|_0 \to X$ an isomorphism. We shorten $(X_A \to \spec A, i)$ to just $X_A$, and call it a \emph{deformation of $X$ over $A$}. Likewise, for a morphism $\phi \from X \to Y$, denote by $\Def_\phi$ the functor classifying deformations of $\phi$ (allowing both $X$ and $Y$ to vary), namely
\[ \Def_\phi(A) = \{(X_A \to \spec A, Y_A \to \spec A ,\phi_A \from  X_A \to Y_A, i_X, i_Y)\},\]
where $X_A \to \spec A$ and $Y_A \to \spec A$ are flat morphisms and $i_X \from X_A|_0 \to X$ and $i_Y \from Y_A|_0 \to Y$ are isomorphisms making the obvious commutative diagram
\begin{equation}\label{eqn:commute}
\begin{tikzpicture}
  \matrix (m) [matrix of math nodes, row sep=2em, column sep=2em]{
    X_A|_0 & Y_A|_0 \\
    X & Y\\
  };
  \path [->, math, map]
  (m-1-1) edge node [auto] {\phi_A|_0} (m-1-2)
  (m-2-1) edge node [auto] {\phi} (m-2-2)
  (m-1-1) edge node [auto] {i_X} (m-2-1)
  (m-1-2) edge node [auto] {i_Y} (m-2-2);
\end{tikzpicture}.
\end{equation}
We shorten the unwieldy $(X_A \to \spec A, Y_A \to \spec A ,{\phi_A} \from X_A \to Y_A, i_X, i_Y)$ to just $(\phi_A \from X_A \to Y_A)$ and call it a \emph{deformation of $\phi$ over $A$}. 

Let $\xi = (\orb P \to P; \sigma_1, \dots, \sigma_n; \phi \from \orb C \to \orb P)$ be such that $(\orb P \to P; \sigma_1, \dots, \sigma_n)$ is a (not necessarily proper) pointed orbinodal curve over $k$ and $\phi \from \orb C \to \orb P$ a finite cover, \'etale over the nodes and the marked points of $P$. Denote by $\Def_\xi$ the functor classifying deformations of $\xi$:
\[ \Def_\xi(A) = \{(\orb P_A \to P_A \to \spec A; \sigma_{i,A}; \phi_A \from \orb C_A \to \orb P_A, i_C, i_P)\},\]
where $(\orb P_A \to P_A\to\spec A; \sigma_{i,A})$ is a (not necessarily proper) pointed orbinodal curve, $\phi \from \orb C_A \to \orb P_A$ a finite cover, and $i_P \from \orb P_A|_0 \to \orb P$ and $i_C \from \orb C_A|_0 \to \orb C$ isomorphisms commuting with $\phi_A$ and $\phi$ as in \eqref{eqn:commute}. If $\xi$ corresponds to a point of $\st H^d$, then we have a formally smooth morphism $\Def_{\xi} \to \st H^d.$ Our goal is to understand $\Def_{\xi}$. 

Following \citet*[\S~4.1]{fedorchuk10:_modul_hyp}, we first simplify the task of studying the deformations of $\xi$ into the study of its deformations on Zariski local pieces. Following his terminology \citep[\S~4.1]{fedorchuk10:_modul_hyp}, let $\{U_i\}$ be an adapted affine open cover of $P$. This means that each $U_i$ contains exactly one from the following: a node, a marked point or a point of $\supp (\br\phi)$. Set
\begin{align*}
  \orb U_i &= U_i \times_P \orb P \\
  \orb V_i &= \orb C \times_{\orb P}\orb U_i\\
  \phi_i &= \phi|_{\orb V_i} \from \orb V_i \to \orb U_i\\
  \xi_i &= (\orb U_i \to U_i; \sigma_i; \phi_i \from \orb V_i \to \orb U_i).
\end{align*}
In the last equation, $\sigma_i$ is ignored if $U_i$ does not contain any marked point. Set $\orb U_{ij} = \orb U_i \cap \orb U_j$, $\orb V_{ij} = \orb V_i \cap \orb V_j$, $\orb U_{ijk} = \orb U_i \cap \orb U_j \cap \orb U_k$, and so on. Observe that $\orb U_{ij}$ does not contain orbinodes, marked points or branch points. To emphasize that these multiple intersections are schemes, we denote them by roman letters $U_{ij}$, $V_{ij}$, $U_{ijk}$, and so on.

We have restriction maps $\Def_\xi \to \Def_{\xi_i}$.
\begin{proposition}\label{thm:def_global_to_local}
  With the above notation, the map $\Def_\xi \to \prod_i \Def_{\xi_i}$ is formally smooth.
\end{proposition}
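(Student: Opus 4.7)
The plan is to verify the infinitesimal lifting criterion for formal smoothness. Let $0 \to J \to A' \to A \to 0$ be a small extension in $\cat{Art}_k$, and suppose we are given $\xi_A \in \Def_\xi(A)$ together with lifts $\xi_{i,A'} \in \Def_{\xi_i}(A')$ of $\xi_A|_{\orb U_i}$. We must produce a global deformation $\xi_{A'} \in \Def_\xi(A')$ restricting to $\xi_A$ over $A$ and to $\xi_{i,A'}$ on each $\orb U_i$. The strategy is to glue the $\xi_{i,A'}$ along the overlaps $U_{ij}$, using $\xi_A$ to supply a ``first approximation'' of the gluing isomorphisms and then correcting by a \v{C}ech coboundary.

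Concretely, the global object $\xi_A$ furnishes canonical isomorphisms
\[ \tau_{ij,A} \from \xi_{i,A'}|_{U_{ij}} \otimes_{A'} A \isom \xi_{j,A'}|_{U_{ij}} \otimes_{A'} A \]
satisfying the cocycle condition on triple overlaps $U_{ijk}$. By construction of the adapted cover, the $U_{ij}$ are non-stacky affine schemes containing no nodes, marked points, or branch points, so $\phi|_{V_{ij}} \from V_{ij} \to U_{ij}$ is an \'etale cover of smooth affine schemes. For such a deformation problem, two deformations over $A'$ agreeing over $A$ are always abstractly isomorphic; thus each $\tau_{ij,A}$ lifts to an isomorphism $\tau_{ij} \from \xi_{i,A'}|_{U_{ij}} \isom \xi_{j,A'}|_{U_{ij}}$, and the set of such lifts is a torsor under $H^0(V_{ij},\sh T)\otimes_k J$, where $\sh T$ denotes the infinitesimal automorphism sheaf of $\xi|_{U_{ij}}$ (on the \'etale piece this reduces to the usual tangent sheaf of $V_{ij}$ over $k$).

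Choose arbitrary such lifts $\tau_{ij}$. The composition $\tau_{ik}^{-1} \circ \tau_{jk} \circ \tau_{ij}$, restricted to $U_{ijk}$, reduces to the identity modulo $J$ and so defines an element of $H^0(V_{ijk}, \sh T) \otimes_k J$; a standard check shows that the collection $\{c_{ijk}\}$ is a \v{C}ech $2$-cocycle for the cover $\{\orb V_i\}$ of $\orb C$ with values in $\sh T \otimes_k J$. Since $\orb C$ is a one-dimensional Deligne--Mumford stack and the cover is by (possibly stacky) affines, \v{C}ech cohomology computes sheaf cohomology and the relevant $H^2$ vanishes for dimension reasons. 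Hence $\{c_{ijk}\}$ is a coboundary, and modifying the $\tau_{ij}$ by the corresponding $1$-cochain yields lifts satisfying the cocycle condition, which glue the $\xi_{i,A'}$ into the desired global deformation $\xi_{A'}$.

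The main obstacle is largely bookkeeping: one must identify the correct infinitesimal automorphism sheaf $\sh T$ for the combined deformation problem (simultaneously deforming the orbinodal curve $\orb P$, its coarse space $P$, the marked sections, and the finite cover $\phi$) and verify the $H^2$-vanishing in this mixed context. The saving grace is that on the overlaps $U_{ij}$ all of the interesting structure (stackiness, markings, branching) is absent, so $\sh T$ reduces to a standard tangent sheaf on a smooth affine \'etale cover, and the $\check H^2$-vanishing on a one-dimensional DM stack is standard once the \v{C}ech--sheaf comparison is in place for the stacky open cover.
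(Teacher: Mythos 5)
Your proof takes essentially the same route as the paper: lift the gluing isomorphisms on the overlaps $U_{ij}$, measure the failure of the cocycle condition on triple overlaps by a \v{C}ech $2$-cocycle valued in an infinitesimal-automorphism sheaf, and kill it using the vanishing of $H^2$ on a one-dimensional space. One correction to your bookkeeping: the set of lifts of $\tau_{ij,A}$ is a torsor not under $H^0(V_{ij}, T_{V_{ij}})\otimes_k J$ but under the smaller group $\Hom(\Omega_{U_{ij}}, O_{U_{ij}})\otimes_k J$ --- an infinitesimal automorphism of the pair $(V_{ij}\to U_{ij})$ is a $\phi$-compatible pair of vector fields, and since $\phi$ is \'etale the vector field upstairs is uniquely determined by the one downstairs, so the sheaf lives on $U_{ij}$, not $V_{ij}$. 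The paper's organization makes this transparent by gluing in two stages: first glue the deformations of $\orb P$ using $H^2(\sh Hom(\Omega_{\orb P}, O_{\orb P})) = 0$, and then observe that the gluing data $c_{ij}$ for $\orb C$ is forced, being the pullback of the $p_{ij}$ along the \'etale maps $V_{ij}\to U_{ij}$ via $\phi^*\Omega_{U_{ij}} = \Omega_{V_{ij}}$. Your single-stage version goes through once $\sh T$ is identified correctly, since the correct sheaf also lives on a curve and hence has vanishing $H^2$.
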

\begin{proof}
  Let $0 \to k \to A' \to A \to 0$ be a small extension. Assume that we are given a deformation $\xi_A$ of $\xi$ on $A$. Denote the restriction of $\xi_A$ to $\orb U_i$ by $\xi_{i,A}$; it is a deformation of $\xi_i$. Suppose, furthermore, that we are given extensions $\xi_{i,A'}$ of $\xi_{i,A}$. We must prove that the $\xi_{i,A'}$ can be glued to get a global extension $\xi_{A'}$ of $\xi_{A}$.

  Note that, by construction, $U_{ij}$ is a nonsingular affine scheme. Therefore, its deformations are trivial. Let $p_{ij} \from O_{\orb U_{i,A'}}|_{U_{ij}} \to O_{\orb U_{j,A'}}|_{U_{ij}}$
  be an isomorphism over the identity
  \[ O_{\orb P_A}|_{U_{ij}} = O_{\orb U_{i,A}}|_{U_{ij}} \to O_{\orb U_{j,A}}|_{U_{ij}} = O_{\orb P_A}|_{U_{ij}}.\]
  The choice of $p_{ij}$ is given by an element of $\Hom (\Omega_{U_{ij}}, O_{U_{ij}})$. The isomorphisms $p_{ij}$ may not be compatible on the triple overlaps $U_{ijk}$. However, since $H^2(\sh Hom(\Omega_{\orb P}, O_{\orb P})) = 0$,
  the two co-cycle defined by $p_{ij} + p_{jk} - p_{ik}$ on $U_{ijk}$ is in fact a co-boundary. As a result, by changing the choice of the $p_{ij}$, we can assure that they are compatible on triple overlaps. Thus, we obtain an orbinodal curve $(\orb P_{A'} \to P_{A'}; \sigma_{A'})$ over $A'$ extending $(\orb P_A \to P_A; \sigma_A)$ over $A$. This takes care of one piece of an extension $\xi_{A'}$ of $\xi_A$.

  Having constructed $\orb P_{A'}$, we construct $\orb C_{A'}$ similarly by choosing isomorphisms 
  \[c_{ij} \from O_{\thk V_{i,A'}}|_{V_{ij}} \to O_{\thk V_{j,A'}}|_{V_{ij}}.\]
  Since $\phi \from V_{ij} \to U_{ij}$ is \'etale, we have an equality $\phi^*\Omega_{U_{ij}} = \Omega_{V_{ij}}$. Observe that if we wish to extend $\phi_A \from \orb C_A \to \orb P_A$ to $\phi_{A'} \from \orb C_{A'} \to \orb P_{A'}$, where $\orb P_{A'}$ is glued by the $p_{ij}$ and $\orb C_{A'}$ by the $c_{ij}$, then $c_{ij} \in \Hom(\Omega_{V_{ij}}, O_{V_{ij}})$ must be the pullback of $p_{ij} \in \Hom(\Omega_{U_{ij}}, O_{U_{ij}})$. By choosing the $c_{ij}$ in this way, we get the desired extension $\orb C_{A'}$ of $\orb C_A$ along with an extension $\phi_{A'} \from \orb C_{A'} \to \orb P_{A'}$ of $\phi_A \from \orb C_A \to \orb P_A$, completing the second piece of the extension $\xi_{A'}$ of $\xi_A$.
\end{proof}

Next, we analyze $\Def_{\xi_i}$. We use the forgetful morphisms $\Def_{\xi_i} \to \Def_{\orb U_i}$ and $\Def_{\xi_i} \to \Def_{\orb V_i}$.
\begin{proposition}\label{thm:def_local}
  Retain the notation of \autoref{thm:def_global_to_local}.
  \begin{compactenum}
  \item If $\orb U_i$ does not contain a point of $\br\phi$, then $\Def_{\xi_i}$ is formally smooth.
  \item If $\orb U_i$ contains a point of $\br\phi$, then $\Def_{\xi_i} \to \Def_{\orb V_i}$ is formally smooth.
  \end{compactenum}
\end{proposition}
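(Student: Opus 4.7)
The plan is to unwind $\Def_{\xi_i}$ by factoring through the deformation functor of its base. Let $\Def_{\orb U_i}$ denote deformations of the pointed orbinodal neighborhood $(\orb U_i \to U_i; \sigma_i)$, and consider the forgetful map $\Def_{\xi_i} \to \Def_{\orb U_i}$. Part (1) will follow from (a) formal smoothness of $\Def_{\orb U_i}$ together with (b) unique deformation of finite étale covers along deformations of the base; part (2) will follow directly by lifting the cover map using the infinitesimal lifting property of the smooth algebra $O_{U_i}$.

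For part (1), the adaptedness of $\{U_i\}$ forces $\orb U_i$ to be of exactly one of three local types: a smooth affine scheme; an orbimarked point étale locally of the form $[\spec k[u]/\mu_n]$; or an orbinode étale locally of the form $[\spec k[u,v]/(uv)/\mu_n]$. In the first two cases there are no moduli, and in the third case deformations are parametrized by a freely chosen parameter $t \in m_A$ in $[\spec A[u,v]/(uv-t)/\mu_n]$ with coarse space $\spec A[x,y]/(xy-t^n)$; in each case $\Def_{\orb U_i}$ is formally smooth. Since $\orb U_i$ misses the branch locus, the cover $\phi_i$ is finite étale; a finite étale morphism of Deligne--Mumford stacks deforms uniquely along any deformation of the target. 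Hence $\Def_{\xi_i} \to \Def_{\orb U_i}$ is formally étale, and formal smoothness of $\Def_{\xi_i}$ follows.

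For part (2), adaptedness gives $\orb U_i = U_i$, a smooth affine $k$-scheme. Given a small extension $A' \onto A$ with kernel $J$, a deformation $\xi_{i,A} = (U_{i,A}, \phi_{i,A} \from \orb V_{i,A} \to U_{i,A})$, and a lift $\orb V_{i,A'}$ of $\orb V_{i,A}$ as an $A'$-scheme, I take $U_{i,A'} := U_i \times_k \spec A'$ as the unique smooth lift of $U_{i,A}$. To build the cover map I need an $A'$-algebra homomorphism $O_{U_i} \otimes_k A' \to O_{\orb V_{i,A'}}$ extending $\phi_{i,A}^\#$, equivalently a $k$-algebra homomorphism $O_{U_i} \to O_{\orb V_{i,A'}}$ lifting the one to $O_{\orb V_{i,A}}$. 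Such a lift exists by the infinitesimal lifting property for smooth $k$-algebras, applied to the surjection $O_{\orb V_{i,A'}} \onto O_{\orb V_{i,A}}$ with square-zero kernel.

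The main care is needed in verifying that the lifted morphism $\phi_{i,A'}$ thus constructed is finite and flat. Flatness follows from the local criterion of flatness applied to the tower $\orb V_{i,A'} \to U_{i,A'} \to \spec A'$, using that $\orb V_{i,A'}$ is $A'$-flat (as a deformation) and that its reduction $\orb V_i \to U_i$ is flat; finiteness follows from Nakayama's lemma, since $\phi_{i,A}$ is finite and the kernel of $A' \onto A$ is nilpotent. The essential nontrivial input is therefore precisely the smoothness of $O_{U_i}$ invoked in the lifting step; once this is available, everything else is routine bookkeeping enabled by the adapted Zariski cover.
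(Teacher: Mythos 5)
Your proposal is correct and follows essentially the same route as the paper: part (1) reduces to deformations of the pointed orbinodal base via the (unique, unobstructed) deformation of the \'etale cover and then checks the base is unobstructed (the paper does this via the vanishing of $\sh Ext^2(\Omega_{\orb U_i}, O_{U_i})$ following \citet{acv:03}, you by classifying the local models directly, which amounts to the same computation), and part (2) is the identical application of the infinitesimal lifting property for the smooth affine $U_i$. Your added verification of finiteness and flatness of the lifted cover is routine bookkeeping the paper leaves implicit.
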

\begin{remark}
  In the second case, $\orb U_i$ does not contain any orbinode or marked point. Hence, it is a nonsingular scheme and $\Def_{\xi_i}$ is simply $\Def_{\phi_i}$.
\end{remark}
\begin{proof}
  In the first case, the map $\phi_i \from \orb V_i \to \orb U_i$ is \'etale.   Therefore, the forgetful map $\Def_{\xi_i} \to \Def_{(\orb U_i; \sigma_i)}$ is an isomorphism. We are thus reduced to showing that the deformations of the pointed orbinodal curve $(\orb U_i; \sigma_i)$ are unobstructed. This is shown in \citep[\S~3]{acv:03}. We briefly recall the argument. The obstructions to the deformations lie in $\sh Ext^2(\Omega_{\orb U_i}, O_{U_i})$. \'Etale locally, $\orb U_i$ is at worst a nodal curve; hence $\sh Ext^2(\Omega_{\orb U_i}, O_{U_i}) = 0$.

  In the second case, $\orb U_i = U_i$ is a nonsingular affine scheme; its deformations are trivial. For the smoothness of $\Def_{\phi_i} \to \Def_{V_i}$, take an extension $A' \to A \to 0$ of rings in $\cat{Art}_k$, a deformation $\phi_{i,A} \from V_{i,A} \to U_i \times \spec A$ of $\phi_i$ over $A$ and an extension $V_{i,A'} \to \spec A'$ of $V_{i,A}$. We must construct an extension $\phi_{i,A'} \from \thk V_{i,A'} \to U_i \times \spec A'$ of $\phi_{i,A}$. By the infinitesimal lifting property for $U_i$, the map $V_{i,A} \to U_i$ extends to a map $V_{i, A'} \to U_i$, yielding such an extension $\phi_{i,A'} \from \thk V_{i,A'} \to U_i \times \spec A'$.
\end{proof}

 Recall that a scheme (stack) is \emph{smoothable} if it is the flat limit of non-singular schemes (stacks). Let $\orb  H^d \subset \st H^d$ be the open locus consisting of 
\[ (\orb P \to P; \sigma; \phi \from \orb C \to \orb P), \]
where $\orb C$ and $\orb P$ are smooth and $\phi$ is simply branched. 
\begin{proposition}\label{thm:locally_smooth_implies_smooth}
  Retain the notation of \autoref{thm:def_global_to_local}. Let $S$ be the set of indices $i$ for which $U_i$ contains a point of $\br\phi$.
  \begin{compactenum}
  \item $\Def_{\xi}$ is smooth if and only if $\Def_{V_i}$ is smooth for all $i \in S$. 
  \item The point of $\st H^d$ given by $\xi$ is in the  closure of $\orb H^d$ if and only if $V_i$ is smoothable for all $i \in S$.
  \end{compactenum}
\end{proposition}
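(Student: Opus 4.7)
The plan is to combine the two preceding propositions: the local-to-global principle for deformations (\autoref{thm:def_global_to_local}) and the analysis of deformations on each patch (\autoref{thm:def_local}). Both parts reduce the global question about $\xi$ to purely local questions about the covers $\phi_i \from V_i \to U_i$ for $i \in S$; the remaining patches $i \notin S$ contribute only unobstructed deformations by \autoref{thm:def_local}(1).

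For part (1), by \autoref{thm:def_global_to_local} the map $\Def_\xi \to \prod_i \Def_{\xi_i}$ is formally smooth, so $\Def_\xi$ is smooth if and only if each $\Def_{\xi_i}$ is smooth. For $i \notin S$, \autoref{thm:def_local}(1) already gives smoothness. For $i \in S$, the patch $\orb U_i = U_i$ is a nonsingular scheme (hence $\orb V_i = V_i$), and \autoref{thm:def_local}(2) says that $\Def_{\xi_i} \to \Def_{V_i}$ is formally smooth; therefore $\Def_{\xi_i}$ is smooth precisely when $\Def_{V_i}$ is. Assembling these equivalences yields the claim.

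For part (2), the point $\xi$ lies in the closure of $\orb H^d$ iff there exists a trait $\spec R \to \st H^d$ whose special point is $\xi$ and whose generic point lands in $\orb H^d$. In the forward direction, restricting such a family to each $U_i$ produces a flat family with smooth total space specializing to $V_i$, exhibiting each $V_i$ as smoothable. Conversely, suppose each $V_i$ with $i \in S$ admits a smoothing $V_{i,R_i}$. Formal smoothness of $\Def_{\xi_i} \to \Def_{V_i}$ (for $i \in S$) lets us extend $V_{i,R_i}$ to a formal deformation of the whole cover $\xi_i$; for $i \notin S$, $\Def_{\xi_i}$ is itself formally smooth so any trivial extension works. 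The formal smoothness of $\Def_\xi \to \prod_i \Def_{\xi_i}$ then glues these local families into a formal deformation of $\xi$, which by Artin approximation is algebraizable to a trait $\spec R \to \st H^d$. Since $\orb H^d \subset \st H^d$ is open and the constructed family has smooth source on the generic fiber, it suffices to arrange that the generic cover is simply branched, which follows from openness of simple branching together with the freedom to replace each local smoothing by a general one in its family.

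The main obstacle is the last step of the reverse direction: ensuring that the smoothing we construct actually lands in $\orb H^d$ rather than in some larger open of $\st H^d$ containing $\xi$. Smoothability of $V_i$ produces a generic fiber with smooth source, but one must also certify that the branch divisor becomes reduced on the generic fiber. This follows because the branch divisor is $S$-flat Cartier on the smooth family, and its fiberwise degree is constant, equal by Riemann--Hurwitz to the count of simple branch points; since being reduced is an open condition on the base, a generic choice of smoothing does the job.
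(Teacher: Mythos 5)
Your overall strategy is the paper's: combine \autoref{thm:def_global_to_local} and \autoref{thm:def_local} to get a formally smooth map $\Def_\xi \to \prod_{i \in S}\Def_{V_i}$, which gives part (1) immediately and reduces part (2) to smoothing each $V_i$ locally and gluing. Part (1) is correct, and the skeleton of part (2) (lift a smoothing of each $V_i$ through $\Def_{\xi_i} \to \Def_{V_i}$, glue via $\Def_\xi \to \prod_i \Def_{\xi_i}$, algebraize) is exactly what the paper does, stated there more tersely. The forward implication of (2) is also fine (modulo the slip that it is the generic \emph{fiber}, not the total space, that is smooth).

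There is, however, a genuine gap in your last step. What your construction produces is a deformation of $\xi$ whose generic fiber has smooth $\orb C$ and $\orb P$; to land in $\orb H^d$ you must further show that this point lies in the closure of the locus where $\phi$ is \emph{simply branched}. Your justification --- that the branch divisor is flat of constant degree and ``being reduced is an open condition on the base, so a generic choice of smoothing does the job'' --- does not work: openness of the reduced-branch-divisor locus says nothing about its being nonempty or dense near the point in question. A priori, every smoothing of $V_i$ as a cover could retain, say, a point of triple ramification (smoothness of the source does not force simple branching). What is actually needed is a density statement: every finite cover of smooth curves deforms to a simply branched one. This follows from a separate local computation --- the local model $x \mapsto x^e$ deforms to a degree-$e$ map with distinct simple critical points, and versality (or another application of \autoref{thm:def_global_to_local} and \autoref{thm:def_local} at a point with smooth source) realizes these local deformations globally. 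The paper elides this with ``it is not hard to see''; your write-up replaces it with an argument that, as stated, is a non sequitur, so this step needs to be repaired.
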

\begin{proof}
  \autoref{thm:def_global_to_local} and \autoref{thm:def_local} together give a smooth morphism $\Def_\xi \to \prod_{i\in S} \Def_{V_i}$, proving the first assertion.  For the second, consider the smooth morphism
  \begin{equation}\label{eqn:smooth_to_orbinodes_and_singularities}
    \Def_\xi \to \prod_{i \not \in S} \Def_{\orb U_i} \times \prod_{i \in S} \Def_{V_i}. 
\end{equation}
For $i \not \in S$, the $\orb U_i$ is either a smooth curve or an orbinodal curve. In either case, it is smoothable. By the smoothness of \eqref{eqn:smooth_to_orbinodes_and_singularities}, if all the $V_i$ are smoothable for $i \in S$ then $\xi$ is in the closure of the locus of 
    \[ (\orb P \to P; \sigma; \phi \from \orb C \to \orb P), \]
    with smooth $\orb C$ and $\orb P$. It is not hard to see that this locus is in the closure of $\orb H^d$, where the only additional constraint is that $\phi$ be simply branched.
\end{proof}

We record two important special cases.
\begin{theorem}\label{thm:smoothness_for_d23}
  For $d = 2$ and $3$, the stack $\st H^d$ is smooth and contains $\orb H^d$ as a dense open substack.
\end{theorem}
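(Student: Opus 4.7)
The plan is to apply \autoref{thm:locally_smooth_implies_smooth}, which reduces both smoothness and density of $\orb H^d$ to a purely local statement: for every local piece $\phi_i \from V_i \to U_i$ over a branch point, the curve $V_i$ must have formally smooth deformation functor and be smoothable. Everything then hinges on understanding the local structure of finite flat, generically étale covers of degree $d \in \{2, 3\}$ of a smooth curve.

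The central claim I would establish is that any such cover is, étale-locally on the base, a hypersurface in a smooth surface. For $d = 2$, this is immediate: any rank $2$ $R$-algebra not equal to $R$ has the form $R[y]/(y^2 + ay + b)$. For $d = 3$, one needs that the algebra $A = (\phi_i)_* O_{V_i}$ is locally monogenic over $R$. Given $y \in A \setminus R$, the $R$-subalgebra $R[y]$ has rank $2$ or $3$; primality of $3$ together with Nakayama's lemma then shows $R[y] = A$ provided one can choose $y$ so that $\bar y$ generates the fiber $A \otimes k$ as a $k$-algebra. The only rank $3$ algebra over a field that fails to be monogenic is $k[x,y]/(x,y)^2$, and a short direct computation shows that it cannot occur as the closed fiber of a generically étale flat rank $3$ extension of a DVR --- associativity of any lift forces either a collapse in rank or a nonzero nilpotent in the generic fiber. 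Thus $A \cong R[y]/(y^3 + py + q)$, exhibiting $V_i$ étale-locally as a Cartier divisor in the smooth surface $\spec R[y]$.

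Once $V_i$ is known to be étale-locally a reduced Cartier divisor in a smooth surface, the remaining deformation-theoretic input is standard. The curve $V_i$ is a local complete intersection, so the obstruction space $\sh Ext^2(\Omega_{V_i}, O_{V_i})$ vanishes and $\Def_{V_i}$ is formally smooth; moreover, a generic perturbation of the defining polynomial of $V_i$ gives a one-parameter smoothing. Feeding both conclusions into \autoref{thm:locally_smooth_implies_smooth} yields simultaneously that $\st H^d$ is smooth and that $\orb H^d \subset \st H^d$ is dense.

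The main obstacle is the local hypersurface claim for $d = 3$; it is also exactly the place where the argument becomes sharp. For $d \geq 4$, non-Gorenstein rank-$d$ fibers can arise even for generically étale covers (cf.\ \autoref{ex:singularities}), producing $V_i$ that are neither local complete intersections nor necessarily smoothable; this is precisely the source of the extra components exhibited in \autoref{ex:extra_components}.
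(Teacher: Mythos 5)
Your reduction via \autoref{thm:locally_smooth_implies_smooth} and your treatment of $d=2$ match the paper, but the $d=3$ argument has a genuine gap: the claim that a generically \'etale, finite flat degree $3$ cover of a DVR is monogenic (hence a hypersurface in a smooth surface) is false. The spatial triple point of \autoref{ex:singularities} is already a degree-$3$ counterexample, not a $d\geq 4$ phenomenon: take $A = \{(f,g,h) \in k\f{t}^{\oplus 3} : f(0)=g(0)=h(0)\}$, the union of three lines through the origin of $\A^3$, each mapping isomorphically to the $t$-line. This $A$ is a free $k\f{t}$-module of rank $3$, its generic fiber is $k((t))^{\oplus 3}$ (\'etale, reduced, no nilpotents), and its closed fiber is exactly $k[x,y]/(x,y)^2$. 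So the ``short direct computation'' ruling out this fiber cannot be correct, and your misattribution of the non-Gorenstein examples to $d\geq 4$ in the last paragraph is a symptom of the same error. Since the total space of this cover is not Gorenstein, it is in particular not a Cartier divisor in a smooth surface, and the local-complete-intersection route (vanishing of $\sh Ext^2(\Omega_{V_i}, O_{V_i})$, perturbing a defining equation) is unavailable for $d=3$.

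The paper gets around this by using the trace splitting $0 \to O_Y \to \phi_*O_X \to F \to 0$ to produce a canonical closed embedding $V_i \into \spec_{U_i}\Sym^*(F)$, where $F$ has rank $d-1$. For $d=3$ this realizes $V_i$ as a subscheme of a nonsingular affine \emph{threefold}; being a reduced curve, $V_i$ is Cohen--Macaulay of codimension two there, so $\Def_{V_i}$ is formally smooth (Hilbert--Burch, as in Hartshorne's treatment of codimension-two CM deformations) and $V_i$ is smoothable by Schaps' theorem. That is the input you need in place of monogenicity; it covers precisely the non-Gorenstein fibers exhibited above, and it is where the argument genuinely uses $d\leq 3$ rather than a hypersurface structure.
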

\begin{proof}
  We begin with a general observation. For a finite cover $\phi \from X \to Y$ of degree $d$, we have an exact sequence
  \[ 0 \to O_Y \to \phi_* O_X \to F \to 0,\]
  split by $1/d$ times the trace map $\tr \from \phi_* O_X \to O_Y$. Therefore, the vector bundle $F$ admits a map $F \to \phi_*O_X$. Since $\phi_*O_X$ is a sheaf of $O_Y$ algebras, we get a  map $\Sym^*(F) \to \phi_*O_X$, which is clearly surjective. In other words, $\phi \from X \to Y$ naturally factors as an embedding 
  \begin{equation}\label{eqn:canonical_scroll_embedding}
    \iota \from X \into \spec_Y\Sym^*(F)
  \end{equation}
  followed by the projection $\spec_Y\Sym^*(F) \to Y$.

  We now prove the theorem. By \autoref{thm:locally_smooth_implies_smooth}, it suffices to prove that $\Def_{V_i}$ is smooth and $V_i$ is smoothable for all $i$ for which $\phi_i \from V_i \to U_i$ is ramified. In the case of $d = 2$, the embedding $\iota$ in \eqref{eqn:canonical_scroll_embedding} exhibits $V_i$ as a divisor in a nonsingular affine surface. It is now well-known that $\Def_{V_i}$ is smooth and $V_i$ is smoothable. In the case of $d = 3$, the embedding $\iota$ exhibits $V_i$ as a subscheme of a nonsingular affine threefold. Since $V_i$ is a reduced curve, it is Cohen--Macaulay. Thus $V_i$ is a Cohen--Macaulay subscheme of codimension two in a nonsingular affine variety. This lets us conclude that $\Def_{V_i}$ is smooth \citep[\S~2.8]{hartshorne10:_defor} and $V_i$ is smoothable \citep[Theorem~2]{schaps77:_defor_cohen_macaul_schem_codim}.
\end{proof}

\section{Projectivity}\label{sec:projectivity}
In this section, we prove that the branch morphism is projective on coarse spaces by showing that the Hodge line bundle is relatively anti-ample. We begin by defining the Hodge bundle.

Let $(\orb P \to P; \sigma; \phi \from \orb C \to \orb P)$ be the universal object over $\st H^d$. Let $\pi_{\orb P} \from \orb P \to \st H^d$ and $\pi_{\orb C} \from \orb C \to \orb H^d$ be the projections. When no confusion is likely, we denote both by $\pi$. Define the \emph{Hodge bundle} $\Lambda$ on $\st H^d$ by
\[ \Lambda = \dual{(R^1\pi_* O_{\orb C})}.\]
Then $\Lambda$ is a locally free sheaf on $\st H^d$. Define the line bundle $\lambda$ by
\[ \lambda = \det \Lambda.\]
We use additive notation for $\lambda$. So, $-\lambda$ denotes the dual of $\lambda$.

Throughout, we use without explicit reference that separated Deligne--Mumford stacks have coarse spaces \citep[Corollary~1.3]{keel97:_quotien}. We also repeatedly use that Deligne--Mumford stacks admit a finite surjective map from a scheme \citep[Proposition~2.6]{vistoli89:_inter}. This is typically used in the following guise: if we have a map from $X$ to the coarse space $Y$ of a Deligne--Mumford stack $\orb Y$, then there is $\tw X \to X$, finite and surjective, such that $\tw X \to Y$  lifts to $\tw X \to \orb Y$.

\begin{theorem}\label{thm:projectivity}
  Let $\orb M$ be a Deligne--Mumford stack separated over $\k$ and let $\orb M \to \st M$ be a morphism. Set $\orb H = \orb M \times_{\st M} \st H^d$. Denote by $H$ and $M$ the coarse spaces of $\orb H$ and $\orb M$ respectively. Then the induced morphism 
  \[ \br \from H \to M\]
  is projective. In particular, if $M$ is projective, so is $H$.
\end{theorem}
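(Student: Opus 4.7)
The plan is to exhibit a relatively ample line bundle for $\br \from H \to M$, the natural candidate being a positive power of the dual $-\lambda$ of the Hodge line bundle $\lambda = \det \Lambda = \det R\pi_* O_{\orb C}$. The morphism $\br \from H \to M$ is automatically proper, since it is obtained from the proper morphism $\orb H = \orb M \times_{\st M} \st H^d \to \orb M$ (\autoref{thm:big_hurwitz}) by passing to coarse spaces; so only the existence of a relatively ample line bundle requires work.

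The first step is to descend $\lambda$ to the coarse space. Some positive multiple $\lambda^{\otimes N}$ descends from $\orb H$ to a line bundle $\lambda_H$ on $H$ by Kollár's criterion, since for each geometric point the inertia group acts on the fiber of $\lambda$ through a finite character, and a bounded power trivializes this action. The second step is the standard reduction of relative ampleness for proper morphisms to fiberwise ampleness: it suffices to show that $-\lambda_H|_{H_m}$ is ample for every geometric point $m \to M$.

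The heart of the argument is the third step. Fix $m$ corresponding to $(P, \Sigma, \sigma)$. A point of $\orb H_m$ is a cover $\phi \from \orb C \to \orb P$ over the fixed $P$ whose branch is $\Sigma$. The orbinodal structure on $\orb P$ is specified by discrete data, so $\orb H_m$ is a finite disjoint union of substacks each with a fixed $\orb P$; it suffices to treat one. A cover corresponds to a rank-$d$ algebra $A = \phi_* O_{\orb C}$ which splits via the trace as $A = O_{\orb P} \oplus E$, with $E$ a rank $(d-1)$ vector bundle whose determinant is pinned down by $\Sigma$ up to $2$-torsion. There is then a forgetful morphism from the fiber to the moduli stack $\orb N$ of rank $(d-1)$ bundles on $\orb P$ with this fixed determinant. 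The fiber of this forgetful map parameterizes, for each $E$, the algebra structures $E \otimes E \to O_{\orb P} \oplus E$ with prescribed discriminant modulo $\Aut(E)$, cut out by closed conditions in a $\Hom$-space; being affine over $\orb N$ and contained in the proper $\orb H_m$, it is finite over $\orb N$.

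The fourth step is to invoke the classical ampleness of the determinant-of-cohomology (theta) line bundle on $\orb N$ and identify its pullback with a suitable multiple of $-\lambda|_{\orb H_m}$. From $\lambda = \det R\pi_{\orb P,*} O_{\orb P} \otimes \det R\pi_{\orb P,*} E$, the first factor depends only on $\orb P$ and is constant along the fiber, while the second is exactly the theta class of the universal $E$. Since finite pullbacks of ample line bundles remain ample, this yields the ampleness of $-\lambda_H$ on $H_m$, completes Step 2, and finishes the proof. The main obstacles I anticipate are the careful choice of the descent power $N$, the finiteness claim at the end of step three (which requires control over how algebra structures can deform while the discriminant is held fixed), and the extension of the classical ampleness of the determinant-of-cohomology to moduli of vector bundles on the orbinodal, possibly nodal curves $\orb P$.
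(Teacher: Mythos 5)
Your overall strategy---reduce to showing that $-\lambda$ is relatively ample, descend a power to the coarse space, and check ampleness fiber by fiber over $M$---matches the paper's. The gap is in your third and fourth steps, the fiberwise argument. You map the fiber $\orb H_m$ to the moduli stack $\orb N$ of rank $(d-1)$ bundles with fixed determinant on $\orb P$ and then ``invoke the classical ampleness of the determinant-of-cohomology (theta) line bundle on $\orb N$.'' No such result is available: $\orb N$ is a non-separated Artin stack, not of finite type, with $\G_m$ in every stabilizer, so no line bundle on it is ample in any usable sense. The classical theorem (Drezet--Narasimhan et al.) concerns the coarse moduli \emph{space} of \emph{semistable} bundles on a smooth curve, and the Tschirnhausen bundles $E = \phi_*O_{\orb C}/O_{\orb P}$ arising from covers are frequently unstable (already for smooth trigonal covers of $\P^1$ the Maroni invariant measures exactly how unbalanced $E$ is), so you cannot restrict to the semistable locus. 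The finiteness claim in step three is also unjustified: ``proper source $+$ affine morphism $\Rightarrow$ finite'' requires the target to be separated, which $\orb N$ is not (an open immersion of $\P^1$ into two copies of $\P^1$ glued along $\A^1$ is affine with proper source but not finite). Moreover, \autoref{ex:non_finiteness} exhibits positive-dimensional families of covers with fixed branch divisor obtained by varying only the algebra structure; whether such families map quasi-finitely to $\orb N$ is exactly the kind of question your argument would need to settle and does not.

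For comparison, the paper sidesteps moduli of bundles entirely (\autoref{thm:lambda_ample_on_fibers}): after a finite base change it rigidifies the partial normalization $\tw{\orb C} \to \orb P$ into a \emph{constant} family, so that a cover in the fiber is determined by the subalgebra $O_{\orb C} \subset O_{\tw{\orb C}}$, equivalently by the quotient $O_{\tw{\orb C}}/O_{\orb C}$, which lives in the fixed finite-dimensional space $H^0(O_{\tw{\orb C}_0}/I_{\Sigma_0}^N O_{\tw{\orb C}_0})$. This gives a quasi-finite map to a Grassmannian, and the Pl\"ucker bundle pulls back to $\det R^1\pi_*O_{\orb C} = -\lambda$ (up to the constant contribution of $\tw{\orb C}_0$). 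If you want to salvage your approach, you would have to replace $\orb N$ by a parameter space on which positivity is actually known and onto which the crimping data maps quasi-finitely---which is essentially what the Grassmannian of quotients accomplishes.
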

The essential ingredient in the proof is the following lemma.
\begin{lemma}\label{thm:lambda_ample_on_fibers}
  Let $s \from \spec k \to \st M$ be a geometric point, and $X$ a scheme with a quasi-finite morphism $X \to s \times_{\st M} \st H^d$. Then the pullback  of $-\lambda$ to $X$ is ample.
\end{lemma}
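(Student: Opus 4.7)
The strategy is a standard Nakai--Moishezon reduction combined with an explicit computation of $-\lambda$ via the Tschirnhaus bundle. First, by Zariski's main theorem the quasi-finite morphism $X \to s \times_{\st M} \st H^d$ factors as an open immersion $X \hookrightarrow \bar X$ followed by a finite morphism $\bar X \to s \times_{\st M} \st H^d$. Since the target is proper by \autoref{thm:proper}, $\bar X$ is proper, and since ampleness is preserved by restriction to open subschemes, we may replace $X$ by $\bar X$ and assume $X$ is proper. By Nakai--Moishezon it then suffices to show $\deg_T(-\lambda|_T) > 0$ for every integral complete curve $T$ equipped with a quasi-finite morphism to $s \times_{\st M} \st H^d$.

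Next, I would pull back the universal family to obtain a cover $\phi_T \from \orb C_T \to \orb P \times T$, noting crucially that the base $\orb P$ does not vary over $T$ because everything lies over the single geometric point $s$. In characteristic zero the trace map yields a canonical decomposition $\phi_{T*}O_{\orb C_T} = O_{\orb P \times T} \oplus F$, where $F$ is the Tschirnhaus bundle of rank $d-1$ on $\orb P \times T$. Writing $\pi \from \orb P \times T \to T$ for the projection, $R\pi_* O_{\orb P \times T}$ is a trivial complex on $T$ (pulled back from $H^*(\orb P, O_{\orb P})$), so
\[
-\lambda|_T \;=\; c_1\bigl(R^1\pi_* O_{\orb C_T}\bigr) \;=\; c_1(R^1\pi_* F) - c_1(\pi_* F).
\]
A further useful observation is that the trace pairing on $F$ has discriminant supported on the fixed divisor $\Sigma$; this forces $(\det F)^{\otimes 2}$ to restrict fiberwise to the fixed line bundle $O_{\orb P}(-\Sigma)$, so that $\det F = p_{\orb P}^* L_0 \otimes p_T^* M$ for some $M \in \Pic(T)$ determined by the family.

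The heart of the argument is then to show $\deg_T \det R^1\pi_* F - \deg_T \det \pi_* F > 0$ whenever the family is non-isotrivial. I would apply Grothendieck--Riemann--Roch to $\pi$ to rewrite this quantity in terms of Chern class intersections $\pi_*\bigl(c_1(F)^2 - 2c_2(F)\bigr)$ together with lower-order terms involving classes pulled back from $\orb P$, which contribute only constants in $T$. The quasi-finiteness of $T \to s \times_{\st M} \st H^d$ forces the algebra structure on $O_{\orb P \times T} \oplus F$ to vary non-trivially along $T$, and hence produces a genuinely $T$-varying contribution to the mixed part of $c_1(F)^2$ or to $c_2(F)$; combined with the rigidity of $\det F$ on fibers, this should yield the strict inequality.

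\textbf{Main obstacle.} The crucial difficulty is converting the non-isotriviality of the family of covers into a strict inequality of degrees. The subtlety is that the Tschirnhaus bundle $F$ alone does not classify the cover: non-isomorphic covers can share an underlying Tschirnhaus bundle, differing only by the algebra structure, so one cannot simply pull back a known ample determinantal line bundle from the moduli of coherent sheaves on $\orb P$. I expect the cleanest route to be either to descend $-\lambda$ to a Simpson-style moduli of semistable sheaves on $\orb P$ on an appropriate open locus, identifying it there with Faltings's ample determinant line bundle while controlling the algebra-structure stabilizers separately, or to perform the positivity analysis locally on the crimp spaces at each branch point, in the spirit of the approach used by \citet{fedorchuk10:_modul_hyp} in the hyperelliptic case $d = 2$.
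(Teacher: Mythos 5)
Your proposal has a genuine gap at its core, and it is the one you yourself flag as the ``main obstacle'': the strict positivity is never proved. The plan of computing $-\lambda|_T$ by Grothendieck--Riemann--Roch from the Chern classes of the Tschirnhaus bundle $F$ cannot be completed as written, for exactly the reason you identify: within a fiber of $\br$ the covers can vary purely in the multiplication map on $O_{\orb P}\oplus F$ while the fiberwise isomorphism class of $F$ stays fixed (the planar triple points of \autoref{ex:non_finiteness} are such a family), so no Chern-class bookkeeping on $F$ alone supplies a mechanism converting quasi-finiteness into a strict inequality; the two exits you sketch (descent to a Simpson-type moduli of sheaves, or a local analysis on crimp spaces) are left entirely speculative. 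There is a second, independent error in the first reduction: Nakai--Moishezon does not reduce ampleness to positive degree on curves. Testing on curves only gives strict nefness, which does not imply ampleness (Mumford's example of a strictly nef, non-ample divisor on a ruled surface), so even a successful degree computation on every complete curve $T$ would not finish the proof.

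The paper's argument resolves precisely the obstacle you name, by a different decomposition of the data. One normalizes $\orb C$ over $\orb P^{\sm}$ to get $\tw\phi\from\tw{\orb C}\to\orb P$; since each $\tw{\orb C_t}\to\orb P_t$ is \'etale away from the \emph{fixed} divisor $\Sigma$ (and over the nodes), there are only finitely many isomorphism types, so after a finite surjective base change the normalized family is constant, $\tw{\orb C}=\tw{\orb C_0}\times X$. The only varying datum is then the subalgebra $O_{\orb C}\subset O_{\tw{\orb C}}$, and because the quotient $O_{\tw{\orb C}}/O_{\orb C}$ is killed by a fixed power $I_{\Sigma}^N$, each fiber is recorded by a quotient of the fixed finite-dimensional space $V=H^0\bigl(O_{\tw{\orb C_0}}/(I_{\Sigma_0}^N\cdot O_{\tw{\orb C_0}})\bigr)$. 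This produces a morphism $\mu\from X\to\G$ to a Grassmannian, quasi-finite because each cover is determined by $\mu(t)$ and $X\to s\times_{\st M}\st H^d$ is quasi-finite; and the constancy of $\tw{\orb C}$ gives $\det\pi_*\bigl(O_{\tw{\orb C}}/O_{\orb C}\bigr)\cong\det R^1\pi_*O_{\orb C}$, i.e.\ the Pl\"ucker bundle pulls back to $-\lambda$. Ampleness is then immediate from quasi-finiteness, with no intersection-theoretic positivity argument and no reduction to curves. If you want to salvage your approach, this Grassmannian picture is exactly the ``positivity analysis on the crimp spaces'' you allude to, carried out globally.
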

\begin{proof}
  Without loss of generality, $X$ is reduced and connected. By replacing $X$ by its normalization $X^\nu \to X$ if necessary, assume further that $X$ is normal. Let $(P; \Sigma; \sigma)$ be the marked nodal curve over $k$ corresponding to the point $s$ and $(\orb P \to P \times X; \sigma \times X; \phi \from \orb C \to \orb P)$ the family over $X$ giving the map to $s \times_{\st M} \st H^d$.  Construct  $\tw{\orb C} \to \orb C$ by normalizing $\orb C$ over ${\orb P}^\sm$. Explicitly, $\tw{\orb C}$ is such that we have
  \begin{align*}
    \tw{\orb C} \times_{P} (P\setminus\Sigma)  &= \orb C \times_{P} (P \setminus \Sigma), \text{ and }\\
    \tw{\orb C} \times_{P} {\orb P}^\sm &= (\orb C \times_{P} {\orb P}^\sm)^\nu.
  \end{align*}
  It is easy to see using the result of \citet{teissier80:_resol_i} that the fibers of $\tw {\orb C} \times_{P} P^\sm \to X$ are the normalizations of the corresponding fibers of $\orb C \times_{P} P^\sm \to X$.

 Consider the family of finite covers $\tw \phi \from \tw{\orb C} \to \orb P$ over $X$. Let $t \to X$ be a $k$-point. Then $\tw{\orb C_t}$ is smooth except over the nodes of $\orb P_t$ and \'etale over the nodes of $\orb P_t$. This implies that there are only finitely many isomorphism types for the cover $\tw{\orb C_t} \to \orb P_t$. Since $X$ is connected, the fibers over $X$ of $\tw \phi \from \tw{\orb C} \to \orb P$ must all be isomorphic as finite covers. By replacing $X$ by a finite cover if necessary, we can make $\tw \phi \from \tw{\orb C} \to \orb P$ a constant family. In other words, we get $\tw{\phi_0} \from \orb C_0 \to \orb P_0$ over $k$ such that
  \[
  \tw{\orb C} = \tw{\orb C_0} \times X, \quad
  \orb P = {\orb P_0} \times X, \text{ and }
  \tw \phi  = \tw{\phi_0} \times X.
  \]

    In the rest of the proof, we treat $O_{\orb C}$ and $O_{\tw {\orb C}}$ as bundles on $\orb P$, omitting $\phi_*$ and $\tw \phi_*$ to lighten notation. Denote by $I_{\Sigma}$ the ideal of $\Sigma$ in $\orb P$. The inclusion $O_{\orb C} \subset O_{\tw {\orb C}}$  is an isomorphism except over $\Sigma \times X$. Hence, the quotient $O_{\tw {\orb C}}/O_{\orb C}$ is annihilated by $I_{\Sigma_0 \times X}^N$ for $N$ large enough. In other words, for every point $t$ of $X$, we have
    \begin{equation}\label{eqn:conductor}
      I_{\Sigma}^N \cdot O_{\tw{\orb C_t}} \subset O_{{\orb C_t}}.
    \end{equation}
    As a result, $O_{\orb C_t}$ is determined by the subspace $H^0(O_{\orb C_t}/I_{\Sigma}^N\cdot O_{\tw{\orb C_t}})$ of $H^0(O_{\tw{\orb C_t}}/I_{\Sigma}^N\cdot O_{\tw{\orb C_t}})$.

    Consider the following sequence on $\orb P$:
    \[ 0 \to O_{\orb C}/(I_{\Sigma \times X}^N \cdot O_{\tw {\orb C}}) \to O_{\tw{\orb C}} / (I_{\Sigma \times X}^N \cdot O_{\tw {\orb C}}) \to O_{\tw {\orb C}}/ O_{\orb C} \to 0.\]
    Applying $\pi_*$, we obtain a sequence of vector bundles on $X$:
    \begin{equation}\label{eqn:push_conductor}
      0 \to \pi_*\left(O_{\orb C}/(I_{\Sigma \times X}^N \cdot O_{\tw {\orb C}}) \right) \to \pi_* \left(O_{\tw{\orb C}} / (I_{\Sigma \times X}^N \cdot O_{\tw {\orb C}})\right) \to \pi_* \left(O_{\tw{\orb C}}/O_{\orb C}\right) \to 0.
    \end{equation}
    Since $\tw {\orb C} =  \tw {\orb C_0} \times X$, the middle vector bundle is in fact trivial:
\[ \pi_*\left(O_{\tw{\orb C}}/(I_{\Sigma_0 \times X}^N \cdot O_{\tw{\orb C}})\right) = V \otimes O_X, \text{ where }
V = H^0\left(O_{\tw{\orb C_0}}/(I_{\Sigma_0}^N \cdot O_{\tw{\orb C_0}})\right).\]
The sequence \eqref{eqn:push_conductor} gives us a morphism $\mu \from X \to \G$, where $\G$ is the Grassmannian of quotients of $V$ of the appropriate dimension. Moreover, by our discussion above, for every point $t$ of $X$, the fiber $\phi_t \from \orb C_t \to \orb P_t$ is determined by $\mu(t)$. Since $X \to s \times_{\st M}\st H^d$ is quasi-finite, $\mu$ must also be quasi-finite. We conclude that the pullback to $X$ of the Pl\"ucker line bundle on $\G$ is ample. By \eqref{eqn:push_conductor}, this pullback is simply $\det\pi_*\left(O_{\tw{\orb C}}/O_{\orb C}\right)$.
On the other hand, applying $\pi_*$ to the exact sequence
\[ 0 \to O_{\orb C} \to O_{\tw{\orb C}} \to O_{\tw{\orb C}}/O_{\orb C} \to 0,\]
and keeping in mind that $\tw{\orb C} = \tw{\orb C_0} \times X$ is a constant family, we get
\[ \det \pi_* \left(O_{\tw {\orb C}}/O_{\orb C}\right) \cong \det R^1\pi_* O_{\orb C}.\]
We deduce that the right hand side, which is the pullback of $-\lambda$ to $X$, is ample.
\end{proof}
\begin{proof}[Proof of \autoref{thm:projectivity}]
  We want to show that $\br \from H \to M$ is projective. Denote also by $\lambda$ the pullback to $\orb H$ of $\lambda$ on $\st H^d$. Since $\Pic(\orb H) \otimes \Q = \Pic(H) \otimes \Q$, we may treat $\lambda$ as a $\Q$ line bundle on $H$. We claim that $-\lambda$ is $\br$-ample. It suffices to check this on the fibers of $\br \from H \to M$. Let $s \to M$ be a $k$-point and set $H_s = \br^{-1}(s)$. Choose a lift $\overline s \to \orb M$ of $s \to M$. Then $H_s$ is the coarse space of $\overline s \times_{\orb M} \orb H$. There is a scheme $X$ and a finite surjective map $X \to \o s \times_{\orb M} \orb H$. \autoref{thm:lambda_ample_on_fibers} implies that $-\lambda$ is ample on $X$. Since $X \to H_s$ is finite and surjective, we deduce that $-\lambda$ is ample on $H_s$.
\end{proof}

\subsection{Spaces of weighted admissible covers}\label{sec:projective_compactifications}
The proper morphism $\st H \to \st M$ lets us construct several compactifications of different variants of the Hurwitz spaces. Some of these have appeared in literature  in different guises. Fix  non-negative integers $g$, $h$, and $b$ related by
\[ 2g-2 = d(2h-2)+b.\]

Let $\st M_{h;b} \subset \st M$ be the open and closed substack whose $k$ points correspond to $(P; \Sigma)$, where $P$ is a connected curve of arithmetic genus $h$ and $\Sigma \subset P$ a divisor of degree $b$. Let $\orb M_{h;b} \subset \st M_{h;b}$ be the open substack where $P$ is smooth and $\Sigma$ is reduced. Then $\st M_{h;b}$ is a smooth stack and it contains $\orb M_{h;b}$ as a dense open substack.

Let $\st H_{g/h}^d \subset \st M_{h;b} \times_{\st M} \st H^d$ be the open and closed substack whose $k$ points correspond to $(\orb P \to P; \phi \from \orb C \to \orb P)$, where $\orb C$ is connected. By the Riemann--Hurwitz formula, $\orb C$ has (arithmetic) genus $g$. Observe that the small Hurwitz stack $\orb H_{g/h}^d$ is simply the open substack defined by
\[ \orb H_{g/h}^d =  \orb M_{h;b} \times_{\st M_{h;b}} \st H_{g/h}^d.\]
It parametrizes $(\phi \from C \to P)$ where $P$ and $C$ are smooth and $\phi$ is simply branched.

 We recall a sequence of open substacks of $\st M_{h;b}$ that contain $\orb M_{h;b}$ and are proper over the base field. These are the spaces of \emph{weighted pointed stable curves} constructed by \citet*{hassett03:_modul}.

\begin{definition}
  Let $\epsilon$ be a rational number. Let $P$ be a nodal curve over $k$ and $\Sigma \subset P$ a divisor supported in the smooth locus. We say that $(P,\Sigma)$ is \emph{$\epsilon$-stable} if
  \begin{compactenum}
  \item for every point $p$ of $P$, we have
    \[ \epsilon \cdot \mult_p(\Sigma) \leq 1;\]
  \item the $\Q$ line bundle $\omega_P \otimes O_P(\epsilon\Sigma)$ is ample, where $\omega_P$ is the dualizing line bundle of $P$.
  \end{compactenum}
  Denote by $\o{\orb M}_{h;b}(\epsilon) \subset \st M_{h;b}$ the open substack parametrizing $\epsilon$-stable marked curves. 
\end{definition}

Recall the main theorem from \citep{hassett03:_modul}.
\begin{theorem}\label{thm:hassett_spaces}\citep[Theorem~2.1, Variation~2.1.3]{hassett03:_modul}
  $\o{\orb M}_{h;b}(\epsilon)$ is a Deligne--Mumford stack, proper over $\k$. It admits a projective coarse space $\o{\sp M}_{h;b}(\epsilon)$.
\end{theorem}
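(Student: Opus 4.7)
The statement is quoted from Hassett's paper, but here is how I would organize a proof from scratch, working inside the framework of the present paper.

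First I would verify algebraicity and Deligne--Mumford type. The locus $\o{\orb M}_{h;b}(\epsilon) \subset \st M_{h;b}$ is cut out by open conditions: the multiplicity bound $\epsilon \cdot \mult_p \Sigma \leq 1$ is open, and ampleness of a $\Q$-line bundle $\omega_P(\epsilon \Sigma)$ on the geometric fibers is open by standard semicontinuity. Since $\st M_{h;b}$ is an algebraic stack locally of finite type (by \autoref{thm:mhb}), so is $\o{\orb M}_{h;b}(\epsilon)$. Being Deligne--Mumford amounts to finiteness of automorphism groups of geometric points; but an automorphism of $(P,\Sigma)$ must preserve the ample $\Q$-line bundle $\omega_P(\epsilon \Sigma)$, hence embeds into the automorphism group of a polarized nodal curve, which is finite.

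Next I would prove separatedness and properness via the valuative criterion. For a henselian DVR $R$ with fraction field $K$, a $K$-point of $\o{\orb M}_{h;b}(\epsilon)$ extends first to an object of the usual moduli of Deligne--Mumford stable $b$-pointed genus $h$ curves by ordinary stable reduction (after base change). Call this limit $(P_0, \sigma_1,\ldots,\sigma_b)$. The desired $\epsilon$-stable limit is obtained from $P_0$ by running a relative log minimal model program on $\omega_{P_0/\spec R}(\epsilon \Sigma_0)$: contract to points the components on which this divisor has non-positive degree, and allow the marked points (now viewed as a divisor $\Sigma$) to collide up to the threshold permitted by $\epsilon \cdot \mult_p \Sigma \leq 1$. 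For curves this MMP is elementary — one just iteratively contracts $(-1)$- and $(-2)$-tails whose weighted contribution is too small. Uniqueness of the limit is forced by the ampleness of $\omega_P(\epsilon \Sigma)$. The main obstacle is making the contractions and crimping-of-marked-points well-defined and canonical; this is where the careful bookkeeping of Hassett's original argument lies.

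Finally, for projectivity of the coarse space $\o{\sp M}_{h;b}(\epsilon)$, I would show that the line bundle $\lambda_{N,\epsilon} := (\omega_{\pi}(\epsilon \Sigma))^{\otimes N}$, where $\pi$ is the universal curve and $\Sigma$ the universal marked divisor, descends (after clearing denominators) to an ample line bundle on the coarse space. Relative ampleness on fibers is built into the definition of $\epsilon$-stability; so $\pi_* \lambda_{N,\epsilon}$ is locally free of constant rank on $\o{\orb M}_{h;b}(\epsilon)$ for $N \gg 0$, and $\det \pi_* \lambda_{N,\epsilon}$ is a candidate polarization. One then checks ampleness either by Kollár's ampleness lemma applied to the GIT-style family of polarized curves, or by Koll\'ar's semipositivity plus a Nakai--Moishezon check on one-parameter families using the properness from the previous step. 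This last ingredient — translating fiberwise ampleness of $\omega_P(\epsilon \Sigma)$ into ampleness of a determinant bundle on the base — is the technically delicate step and would occupy most of the work.
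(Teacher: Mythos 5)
The paper does not prove this statement; it is quoted verbatim from Hassett's paper on moduli of weighted pointed stable curves, and the theorem is invoked as a black box to feed into the fibre-product construction of $\o{\orb H}^d_{g/h}(\epsilon)$. There is thus no ``paper's own proof'' to compare against.

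That said, your sketch is a reasonable outline of Hassett's actual argument (stable reduction followed by contraction of non-ample components, then a descent-and-semipositivity argument for projectivity), with one step that is stated too quickly. The claim that ``an automorphism of $(P,\Sigma)$ embeds into the automorphism group of a polarized nodal curve, which is finite'' is not quite a one-liner: a smooth rational curve with an ample line bundle has $\PGL_2$ as automorphism group, so finiteness is \emph{not} automatic from the polarization alone. What saves the argument is that the $\epsilon$-stability inequality $\deg \omega_{P'}(\epsilon\Sigma|_{P'}) > 0$ on a genus-zero component $P'$ with $k$ nodes, combined with the multiplicity cap $\mult_p\Sigma \le 1/\epsilon$, forces $P'$ to carry at least three distinct special points (nodes plus marked points). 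Automorphisms must fix these, so they act trivially on $P'$; only then does finiteness follow. This is precisely the little combinatorial lemma that replaces the genus-$\ge 2$ case of Deligne--Mumford stability, and it is worth making explicit. The remaining two parts you flag as ``where the work lies'' (the canonicity of the log-MMP contractions, and ampleness of the determinant of $\pi_*(\omega_\pi(\epsilon\Sigma))^{\otimes N}$ via Koll\'ar semipositivity and Nakai--Moishezon on curves) are indeed the substance of Hassett's proof, and your summary of them is accurate as a roadmap even though it does not constitute a proof.
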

If $\deg(\omega_P(\epsilon\Sigma)) = \epsilon \cdot b + 2h-2 \leq 0$, then $\o{\orb M}_{h;b}(\epsilon)$ is empty. Otherwise, it contains $\orb M_{h;b}$ as a dense open substack.

\begin{definition}
  Define the stack $\o{\orb H}_{g/h}^d(\epsilon)$ of \emph{$\epsilon$-admissible covers} by the formula
  \[ \o{\orb H}_{g/h}^d(\epsilon) = \o{\orb M}_{h;b}(\epsilon) \times_{\st M_{h;b}} \st H_{g/h}^d.\]
  We sometimes call $\epsilon$-admissible covers \emph{weighted admissible covers}.
\end{definition}

\begin{corollary}\label{thm:e_admissible_covers}
  $\o{\orb H}_{g/h}^d(\epsilon)$ is a Deligne--Mumford stack, proper over $\k$. It admits a projective coarse space $\o{\sp H}_{g/h}^d(\epsilon)$ and a morphism
  \[ \br \from \o{\orb H}_{g/h}^d(\epsilon) \to \o{\orb M}_{h;b}(\epsilon).\]
\end{corollary}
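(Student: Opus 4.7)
The plan is to deduce this corollary by assembling three already--established inputs: the main theorem (\autoref{thm:big_hurwitz}), Hassett's theorem on weighted pointed curves (\autoref{thm:hassett_spaces}), and the projectivity result (\autoref{thm:projectivity}). Since $\o{\orb H}_{g/h}^d(\epsilon)$ is defined by the Cartesian square
\[
\begin{tikzpicture}[math, node distance=5em, baseline=(A.base)]
  \node (A) {\o{\orb H}_{g/h}^d(\epsilon)};
  \node (B) [right of=A, node distance=7em] {\st H_{g/h}^d};
  \node (C) [below of=A] {\o{\orb M}_{h;b}(\epsilon)};
  \node (D) [right of=C, node distance=7em] {\st M_{h;b}};
  \path[->, map]
  (A) edge node[auto] {} (B)
  (A) edge node[left] {\br} (C)
  (B) edge node[auto] {\br} (D)
  (C) edge node[auto] {} (D);
  \cartsquare{(A)}{(D)};
\end{tikzpicture},
\]
the branch morphism $\br \from \o{\orb H}_{g/h}^d(\epsilon) \to \o{\orb M}_{h;b}(\epsilon)$ exists by construction as the base change of $\br \from \st H_{g/h}^d \to \st M_{h;b}$.

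First I would check the stacky properties. By \autoref{thm:big_hurwitz}, $\br \from \st H_{g/h}^d \to \st M_{h;b}$ is proper and representable by Deligne--Mumford stacks. Both properties are stable under base change, so the left vertical arrow $\br \from \o{\orb H}_{g/h}^d(\epsilon) \to \o{\orb M}_{h;b}(\epsilon)$ is proper and representable by Deligne--Mumford stacks. By \autoref{thm:hassett_spaces}, $\o{\orb M}_{h;b}(\epsilon)$ is itself a Deligne--Mumford stack proper over $\k$. Composing, $\o{\orb H}_{g/h}^d(\epsilon)$ is a Deligne--Mumford stack proper over $\k$. In particular, it is separated and of finite type, hence admits a coarse moduli space $\o{\sp H}_{g/h}^d(\epsilon)$, which is proper over $\k$ by general properties of coarse spaces.

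For projectivity of the coarse space, I would apply \autoref{thm:projectivity} directly with $\orb M = \o{\orb M}_{h;b}(\epsilon)$ and $\orb H = \o{\orb H}_{g/h}^d(\epsilon)$. The hypotheses of that theorem are satisfied: $\o{\orb M}_{h;b}(\epsilon)$ is a Deligne--Mumford stack separated over $\k$, and its coarse space $M = \o{\sp M}_{h;b}(\epsilon)$ is projective by \autoref{thm:hassett_spaces}. The conclusion gives that $\br \from H \to M$ is projective on coarse spaces, and since the target is projective, so is $H = \o{\sp H}_{g/h}^d(\epsilon)$.

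There is no real obstacle here; the work has all been done in the main theorem and the projectivity theorem. The only point worth a sentence of justification is the compatibility of coarse spaces with the base change defining $\o{\orb H}_{g/h}^d(\epsilon)$, but this is subsumed in the statement of \autoref{thm:projectivity}, which is formulated precisely to handle exactly this situation.
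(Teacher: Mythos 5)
Your proposal is correct and is exactly the intended argument: the paper's own proof is the one-line ``Follows directly from \autoref{thm:big_hurwitz} and \autoref{thm:projectivity},'' and you have simply spelled out the base-change and composition steps (together with \autoref{thm:hassett_spaces}) that make that deduction work.
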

\begin{proof}
  Follows directly from \autoref{thm:big_hurwitz} and \autoref{thm:projectivity}.
\end{proof}
As before, if $\epsilon\cdot b + 2h-2 \leq 0$, then $\o {\orb H}_{g/h}^d(\epsilon)$ is empty. Otherwise, it contains ${\orb H}_{g/h}^d$ as an open substack (but it may not be dense; see \autoref{ex:extra_components}).

\subsection{Examples}
We describe the geometry of the spaces of weighted admissible covers by some illustrative examples.

These spaces generalize some known compactifications of Hurwitz spaces, mentioned in the following two examples.
\begin{example}[Twisted admissible covers]
  Consider the case $\epsilon = 1$ and the resulting stack of $1$-admissible covers $\o{\orb H}^d_{g/h}(1)$. It parametrizes $(\orb P \to P; \phi \from \orb C \to \orb P)$, where $\br\phi \subset P$ is \'etale over the base. The induced morphism on coarse spaces $C \to P$ is an admissible cover in the sense of \citet*{Harris82:_Kodair_Dimen_Of_Modul_Space_Of_Curves} (but with unordered branch points).

  By \autoref{thm:locally_smooth_implies_smooth}, the stack $\o{\orb H}^d_{g/h}(1)$ is smooth and contains the small Hurwitz stack $\orb H_{g/h}$ as a dense open substack. In fact, $\o{\orb H}^d_{g/h}(1)$ is essentially the stack of \emph{twisted admissible covers} of \citet*{acv:03}; the only difference is that in \citep{acv:03}, the branch points are ordered, whereas in $\o{\orb H}^d_{g/h}(1)$, they are unordered.
\end{example} 

\begin{example}[Spaces of hyperelliptic curves]\label{ex:hyperelliptic}
  Consider the case $h = 0$ and $d = 2$, and the resulting stacks $\o{\orb H}^2_g(\epsilon)$ of $\epsilon$-admissible covers. Consider a  $k$-point of $\o{\orb H}^2_g(\epsilon)$, given by a cover $(\orb P \to P; \phi \from \orb C \to \orb P)$. Say $\lfloor 1/\epsilon \rfloor = n$. Away from over the nodes of $\orb P$, the singularities of $\orb C$ are (\'etale) locally of the form 
  \[ y^2-x^m,\]
  for $m \leq n$. Thus, the spaces $\o{\sp H}^2_g(\epsilon)$ are just the spaces of hyperelliptic curves with $A_n$ singularities constructed by \citet{fedorchuk10:_modul_hyp}.
\end{example}

The singularities of $\orb C$ get much more interesting for higher degrees, as illustrated in the next example.
\begin{example}[Singularities of $\orb C$]\label{ex:singularities}
  Let $(\orb P \to P; \phi \from \orb C \to \orb P)$ be a $k$-point of $\o{\orb H}_{g/h}^d(\epsilon)$. Notice that we do not explicitly restrict the singularities of $\orb C$; the restrictions are imposed indirectly by the allowed multiplicity of the branch divisor. We list some examples of the singularities that appear on $\orb C$ for small values of $1/\epsilon$ and $d \geq 3$.
  \begin{compactenum}
  \item \mbox{$1/2 < \epsilon \leq 1$}
    
    In this case, $\orb C$ is smooth (except, of course, over the nodes of $\orb P$) and simply branched over $\orb P$. 
    
  \item \mbox{$1/3 < \epsilon \leq 1/2$}

    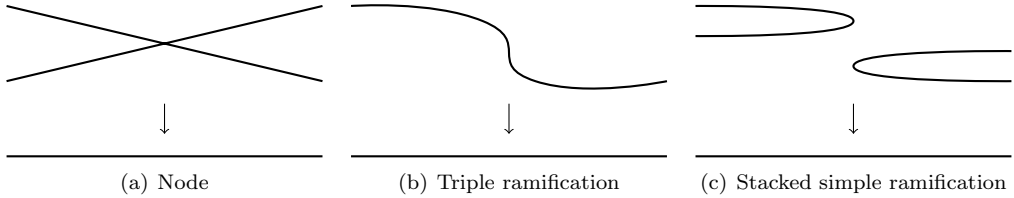
\begin{figure}[t]
      \centering
      \subfigure[Node]{
        \begin{tikzpicture}[yscale=.2, xscale=.7, thick]
          \draw [smooth, tension=1]
          plot coordinates{(3,4) (-3, -1)} 
          plot coordinates{(3,-1) (-3,4)};
          \draw (-3,-6) -- (3, -6);
          \path (0,-2.5) edge [->, thin] (0,-4.5);
        \end{tikzpicture}
        \label{fig:node}
      }
      \subfigure[Triple ramification]{
        \begin{tikzpicture}[yscale=.2, xscale=.7, thick]
          \draw [smooth, tension=1]
          plot coordinates{(-3,4) (-.5, 3) (.5,-1) (3,-1)};
          \draw (-3,-6) -- (3, -6);
          \path (0,-2.5) edge [->, thin] (0,-4.5);
        \end{tikzpicture}
        \label{fig:triple}
      }
      \subfigure[Stacked simple ramification]{
        \begin{tikzpicture}[yscale=.2, xscale=.7, thick]
          \draw [smooth, tension=2]
          plot coordinates{(-3,4) (0, 3) (-3, 2)}
          plot coordinates{(3,1) (0, 0) (3, -1)};
          \draw (-3,-6) -- (3, -6);
          \path (0,-2.5) edge [->, thin] (0,-4.5);
        \end{tikzpicture}
        \label{fig:stacked}
      }
      
      \caption{Possible local pictures of $\phi$ for $1/3 < \epsilon \leq 1/2$}
    \end{figure}

    In this case, $\orb C$ can have only nodal singularities. Also, the branches of the nodes must be individually unramified over $\orb P$ as in \autoref{fig:node}. This case also allows certain kinds of multiple ramification in $\phi$: it can be triply ramified as in \autoref{fig:triple} or it can have two simple ramification points lying over the same point of $\orb P$ as in \autoref{fig:stacked}.

  \item \mbox{$1/4 < \epsilon \leq 1/3$}

    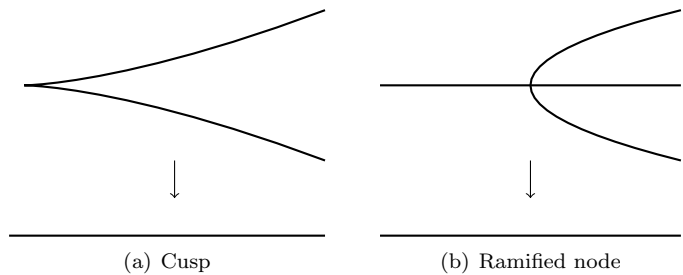
\begin{figure}[t]
      \centering
      \subfigure[Cusp]{
        \begin{tikzpicture}[domain=-1:1, thick, math, xscale=2]
          \draw plot[parametric, id=cusp] function{2*t*t,t*t*t};
          \draw (-.1,-2) -- (2,-2);
          \path (1,-1) edge [->, thin] (1,-1.5);
        \end{tikzpicture}
        \label{fig:cusp}
      }
      \quad
      \subfigure[Ramified node]{
        \begin{tikzpicture}[domain=-1:1, thick, math, xscale=2]
          \draw plot[parametric,id=ramnode1] function{t*t, t};
          \draw plot[parametric,id=ramnode2] function{t, 0};
          \draw (-1,-2) -- (1,-2);
          \path (0,-1) edge [->, thin] (0,-1.5);
        \end{tikzpicture}
        \label{fig:ramified_node}
      }
      \caption{Some of the possible local pictures of $\phi$ for $1/4 < \epsilon \leq 1/3$}
    \end{figure}
    
    In this case, $\orb C$ can have nodal and cuspidal (formally $k\f{x,y}/(y^2-x^3)$) singularities as in \autoref{fig:cusp}. This case also allows even more multiple ramification in $\phi$; for example, it is possible to have ramification types $(4)$, or $(3,2)$ or $(2,2,2)$ in a fiber of $\phi$. Another interesting possibility is a \emph{ramified node} (\autoref{fig:ramified_node}). It is a combination of multiple ramification and the development of a singularity. This is a node on $\orb C$, one of whose branches is simply ramified over $\orb P$, formally expressed by 
    \[ k\f{t} \to k\f{t,x}/x(x^2-t).\]
    
  \item \mbox{$\epsilon \leq 1/4$}

    In this case, $\orb C$ can have non-Gorenstein singularities. Indeed, the spatial triple point (formally the union of the coordinate axes in $\A^3$) is a branched cover of a line with branch divisor of multiplicity four. Since multiplicity four is allowed in the branch divisor for $\epsilon \leq 1/4$, the cover $\orb C \to \orb P$ can have formal local picture of a spatial triple point:
    \[ k\f{t} \to k\f{t,x,y}/(xy,y(x-t),x(y-t)).\]
  \end{compactenum}
\end{example}

In the case of admissible covers ($\epsilon = 1$) and in the case of hyperelliptic curves ($d = 2$), the branch morphism is finite. This is no longer the case if $d \geq 3$ and $\epsilon$ is sufficiently small. In fact, as soon as $\epsilon \leq 1/6$, we have positive dimensional fibers, as illustrated in the next example.
\begin{example}[Non-finiteness of the branch morphism]
\label{ex:non_finiteness}
  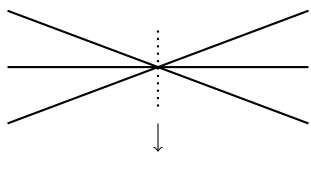
\begin{figure}[t]
    \centering
    \begin{tikzpicture}[thick, yscale=.75]
      \draw 
      (-2,-1) -- (2,1)
      (-2, 0) -- (2,0)
      (-2, 1) -- (2,-1);
      \draw[dotted] (0,-.7) -- (0,.7);
      \draw (-2,-2) -- (2,-2);
      \path[thin, ->] (0,-1) edge (0,-1.5);
    \end{tikzpicture}
    \caption{Planar triple points are allowed for $\epsilon \leq 1/6$}
    \label{fig:planar_triple_point}
  \end{figure}  

  For every $c \in k$, consider the the planar triple point expressed as a triple cover of a smooth curve (\autoref{fig:planar_triple_point}) by the formal description:
  \begin{equation}\label{eqn:planar_triple_points}
    k\f{t} \to k\f{t,x}/x(x-t)(x-ct).
  \end{equation}

  The discriminant is the ideal $\ideal{t^6}$. Although the rings $k\f{t,x}/x(x-t)(x-ct)$ are isomorphic for different choices of $c$, they are \emph{not} necessarily isomorphic as $k\f{t}$ algebras. Said differently, although the singularities $\spec k\f{t,x}/x(x-t)(x-ct)$ are isomorphic abstractly, they are \emph{not} necessarily isomorphic as triple covers of $\spec k\f{t}$. One way to see this is the following. Consider the tangent space to $\spec k\f{t,x}/x(x-t)(x-ct)$ at $(0,0)$. In this two dimensional vector space, there are four distinguished one dimensional subspaces: the three tangent spaces of the branches and the kernel of the projection to the tangent space of $\spec k\f{t}$. The moduli of the configuration of these four subspaces depends on $c$. Up to a finite ambiguity, different choices of $c$ give non-isomorphic triple covers.

  For $d \geq 3$, $\epsilon \leq 1/6$ and $h$, $b$ large enough to allow $\epsilon \cdot b + 2h-2 \geq 0$, the formal descriptions in \eqref{eqn:planar_triple_points} are realizable in covers of a fixed genus $h$ curve with a fixed branch divisor. We thus get infinitely many points in a fiber of $\br \from \o{\orb H}_{g/h}^d(\epsilon) \to \o{\orb M}_{h;b}(\epsilon)$.
\end{example}

In the case of admissible covers ($\epsilon = 1$), the small Hurwitz space ${\orb H}_{g/h}^d$ is dense in $\o{\orb H}_{g/h}^d(\epsilon)$. By \autoref{thm:smoothness_for_d23}, this remains the case for arbitrary $\epsilon$ if $d \leq 3$. However, this is not true in general, as illustrated by the following example.
\begin{example}[Extraneous components in $\o{\orb H}_{g;h}^d(\epsilon)$]\label{ex:extra_components}
  For a sufficiently large $d$ and a sufficiently small $\epsilon$, we exhibit a point in $\o{\orb H}_{g/h}^d(\epsilon)$ that is not in the closure of $\orb H^d_{g/h}$. For simplicity, take $h = 0$; the phenomenon is local, so the case of $h = 0$ can be used to construct examples for any $h$.
  
  Let $C$ be a reduced, connected curve that is not a flat limit of smooth curves (see the article by \citet{mumford75:_pathol_iv} for the existence of such curves). For sufficiently large $d$, we have a finite map $\phi \from C \to \P^1$ of degree $d$. Let $\epsilon$ be so small that $\epsilon \cdot \mult_p(\br\phi) \leq 1$ for all $p \in \P^1$. Then $(\P^1; \phi \from C \to \P^1)$ is a point in $\o{\orb H}_{g}^d(\epsilon)$ which, by construction, is not in the closure of $\orb H_g^d$. Thanks to \autoref{thm:smoothness_for_d23}, there are no extraneous components for $d = 2$ or $3$. By \autoref{thm:locally_smooth_implies_smooth}, unsmoothable singularities are the only reason for extraneous components. 
\end{example}

We end the section with a question prompted by \autoref{ex:extra_components}.
\begin{question}
  For which $d$, $g$ and $h$ is $\st H_{g/h}^d$ irreducible? More precisely, for which $d$, $g$, $h$ and $\epsilon$ is $\o{\orb H}_{g/h}^d(\epsilon)$ irreducible?
\end{question}


 \section{Moduli of $d$-gonal Singularities and Crimping}\label{sec:crimps}
 The goal of this section is to understand the fibers of $\br \from \st H^d \to \st M$. Consider a point $s \from \spec k \to \st M$. For  simplicity, assume that it corresponds to a smooth curve $P$ with a marked divisor $\Sigma$. The fiber of $\br$ over $s$ consists precisely of degree $d$ covers $\phi \from C \to P$ with $\br\phi = \Sigma$. Let $\tw{C} \to C$ be the normalization. Since $\tw{C}$ is smooth, the cover $\tw{C} \to P$ is determined by its restriction $\tw{C}|_{P\setminus\Sigma} \to P\setminus\Sigma$, which is \'etale. Since there are only finitely many \'etale covers of degree $d$ of a smooth curve, there are only finitely many possibilities for $\tw\phi \from \tw{C} \to P$. The fiber of $\br$ over $s$ thus decomposes into finitely many (open and closed) components corresponding to the choice of $\tw\phi \from \tw{C} \to P$. Within each component, $C \to P$ is obtained by \emph{crimping} a fixed $\tw{C} \to P$ over the points of $\Sigma$. The crimping can be described formally locally around the points of $\Sigma$ in $P$. In this way, the description of the fibers of $\br$ includes the discrete global data of the normalization and the continuous local data of the crimping. 

Moduli of singular curves and the phenomenon of crimping have been studied extensively by \citet{van_der_wyck:2010}. Our study of crimping in the context of finite covers, however, is much more elementary.

\subsection{The space of crimps of a finite cover}\label{sec:finite_cover_crimps}
Let $\orb Y$ be a reduced, purely one dimensional Deligne--Mumford stack over $k$ and $\Sigma \subset \orb Y$ a Cartier divisor. Let $\tw\phi \from \tw{\orb X} \to \orb Y$ a finite cover of degree $d$, \'etale over $\orb Y \setminus \Sigma$. In all the cases we consider, $\orb Y$ is either a (pointed) orbinodal curve or the spectrum of a DVR. Define the functor $\Crimp_{\tw\phi,\Sigma} \from \cat{Schemes}_k \to \cat{Sets}$ of \emph{crimps of $\tw\phi$ over $\Sigma$} by
\[ \Crimp_{\tw\phi,\Sigma}(T) = \{(\tw{\orb X} \times T \to \orb X \stackrel{\phi}{\to} \orb Y \times T)\}/\text{Isomorphism},\]
where $\phi \from \orb X \to \orb Y \times T$ is a finite cover of degree $d$ with $\br(\phi) = \Sigma \times T$. Two such crimps $\tw{\orb X} \times T \to \orb X_i \to \orb Y \times T$, for $i=1,2$, are isomorphic if there is an isomorphism $\orb X_1 \to \orb X_2$ that commutes with the relevant maps
\[
\begin{tikzpicture}
  \matrix (m) [matrix of math nodes, row sep=1em, column sep=2em]{
    \tw{\orb X} \times T & \orb X_1 & \orb Y \times T \\
    \tw{\orb X} \times T & \orb X_2 & \orb Y \times T \\
  };
  \path [map, ->]
  \foreach \i in {1,2}{
    (m-\i-1) edge (m-\i-2)
    (m-\i-2) edge (m-\i-3)
  }
  (m-1-2) edge (m-2-2);
  
  \draw [equal]
  (m-1-1) -- (m-2-1)
  (m-1-3) -- (m-2-3);
\end{tikzpicture}.
\]
We sometimes write $\Crimp(\tw\phi, \Sigma)$ instead of $\Crimp_{\tw\phi, \Sigma}$ for better readability.

If $\orb Z \to \orb Y$ is a morphism such that $\Sigma_{\orb Z} \subset \orb Z$ is also a divisor, then we have a natural transformation
\[ \Crimp(\tw\phi, \Sigma) \to \Crimp(\tw\phi_{\orb Z}, \Sigma_{\orb Z})\]
defined by
\[ (\tw{\orb X} \times T \to \orb X \stackrel{\phi}{\to} \orb Y \times T) \mapsto (\tw{\orb X}_{\orb Z} \times T \to \orb X_{\orb Z} \stackrel{\phi_{\orb Z}}{\to} \orb Y_{\orb Z} \times T).\]

Let $G = \Aut(\tw\phi)$ be the group of automorphisms of $\tw{\orb X}$ over the identity of $\orb Y$. This is a finite group, which acts on $\Crimp(\tw\phi, \Sigma)$ as follows:
\[ G \ni \alpha \from (\tw{\orb X} \times T \stackrel{\nu}\longrightarrow \orb X \stackrel{\phi}{\to} \orb Y \times T) \mapsto (\tw{\orb X} \times T \stackrel{\nu\circ\alpha^{-1}}\longrightarrow \orb X \stackrel{\phi}{\to} \orb Y \times T).\]

\begin{remark}\label{rem:crimp_subalgebra}
A crimp may be equivalently thought of as a suitable subalgebra $\phi_*O_{\orb X}$ of the algebra $\tw\phi_*O_{\tw{\orb X} \times T}$ on $\orb Y \times T$. Then isomorphism of crimps simply becomes equality of subalgebras. The action of $G$ is induced by the action of $G$ on $\tw\phi_* O_{\tw{\orb X}}$.
\end{remark}
Throughout, we view $O_{\tw{\orb X} \times T}$ and $O_{\orb X}$ as sheaves of algebras on $\orb Y \times T$, omitting $\tw\phi_*$ and $\phi_*$ to lighten notation. Observe that the quotient $O_{\tw{\orb X} \times T}/O_{\orb X}$ is an $O_{\orb Y\times T}$ module supported entirely on $\Sigma \times T$. In other words, $\tw {\orb X} \times T \to \orb X$ is an isomorphism away from $\Sigma \times T$.

Having defined $\Crimp(\tw\phi, \Sigma)$ in wide generality, we turn to the case of interest. Let $(\orb P \to P; \sigma_1, \dots, \sigma_n)$ be a pointed orbinodal curve and $\Sigma \subset \orb P$ a divisor supported in the general locus $P^\gen = \orb P^\sm \setminus {\sigma_1, \dots, \sigma_n}$. Let $\tw\phi \from \tw{\orb C} \to \orb P$ be a finite cover, \'etale over $\orb P \setminus \Sigma$. We begin by making precise our remark that crimps can be described formally locally around the points of $\Sigma$. 
\begin{proposition}\label{thm:crimp_local}
  Let $\tw \phi \from \tw{\orb C} \to \orb P$ and $\Sigma$ be as above.
  \begin{enumerate}
  \item Let $U \subset \orb P$ be an open set containing $\Sigma$. Then the transformation
    \[ \Crimp(\tw\phi, \Sigma) \to \Crimp(\tw\phi_U, \Sigma)\]
    is an isomorphism.
  \item The transformation
    \[ \Crimp(\tw\phi, \Sigma) \to \prod_{s \in \supp(\Sigma)} \Crimp(\tw \phi \times_{\orb P} \spec O_{P,s}, \Sigma \times_{\orb P}\spec O_{P,s})\]
    is an isomorphism.
  \item\label{eqn:crimp_formal} The transformation
    \[ \Crimp(\tw\phi, \Sigma) \to \prod_{s \in \supp(\Sigma)} \Crimp(\tw \phi \times_{\orb P} \spec \compl{O_{P,s}}, \Sigma \times_{\orb P}\spec \compl{O_{P,s}})\]
    is an isomorphism.
  \end{enumerate}
\end{proposition}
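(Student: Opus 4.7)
The approach is uniform across the three parts. By Remark~\ref{rem:crimp_subalgebra}, a $T$-point of any of these $\Crimp$ functors is the data of a coherent subalgebra $A \subset \tw\phi_* O_{\tw{\orb C} \times T}$ of the appropriate finite-cover type, or equivalently a surjection $\tw\phi_* O_{\tw{\orb C} \times T} \twoheadrightarrow M$ onto a coherent sheaf $M$ supported scheme-theoretically on $\Sigma \times T$. The three parts then amount to saying that such data is determined by its restriction to a neighborhood of $\Sigma$ (part 1), by its components at the points of $\supp(\Sigma)$ (part 2), and by the formal completions of those components (part 3). In each case the essential point is that the defining data is concentrated on $\Sigma$.

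Part (1) is by gluing: given $A_U$ on $U \times T$, the equality $A_U = \tw\phi_* O_{\tw{\orb C}_U \times T}$ on $(U \setminus \Sigma) \times T$ (forced by support of the cokernel) lets us patch $A_U$ with $\tw\phi_* O_{\tw{\orb C} \times T}$ on $(\orb P \setminus \Sigma) \times T$ to produce the inverse to restriction. For part (2), the sheaf $M$ is supported on the disjoint union $\bigsqcup_{s \in \supp(\Sigma)} \{s\} \times T$, so it decomposes canonically as $M = \bigoplus_s M_s$ with $M_s$ supported on $\{s\} \times T$. Correspondingly the surjection $\tw\phi_* O_{\tw{\orb C} \times T} \twoheadrightarrow M$ splits into independent surjections onto the $M_s$, each of which is a morphism of coherent sheaves determined by its stalk at $s$. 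This stalky datum is precisely a crimp on $\spec O_{P,s}$, giving the desired product identification.

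The substantive content is part (3); we prove $\Crimp(\tw\phi_{\spec O_{P,s}}, \Sigma_{\spec O_{P,s}}) = \Crimp(\tw\phi_{\spec \compl{O_{P,s}}}, \Sigma_{\spec \compl{O_{P,s}}})$ for each $s$. Since $s \in P^\gen$, the local ring $O_{P,s}$ is a DVR with uniformizer $\pi$, and the equation of $\Sigma$ at $s$ takes the form $f = \pi^e$ for some $e \geq 1$. Set $\tw A_T = \tw\phi_* O_{\tw{\orb C}_{\spec O_{P,s}} \times T}$. A crimp $A_T \subset \tw A_T$ has cokernel supported on $V(f) \times T$ and coherent, hence annihilated by some power $f^N$. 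So $f^N \tw A_T \subset A_T \subset \tw A_T$, and, noting that $f^N \tw A_T$ is an ideal of $A_T$ (because $A_T \cdot \tw A_T \subset \tw A_T$), the subalgebra $A_T$ is determined by its image $\bar A_T$, a sub-$O_{P,s}/(f^N)$-algebra of $\tw A_T / f^N \tw A_T$. The crucial observation is that $O_{P,s}/(f^N)$ is a local Artinian ring, hence already complete, so $O_{P,s}/(f^N) = \compl{O_{P,s}}/(f^N)$. Consequently $\tw A_T / f^N \tw A_T$ together with the subalgebra $\bar A_T$ is the same object whether viewed over $O_{P,s}$ or over $\compl{O_{P,s}}$, and the desired bijection follows by taking preimages in $\tw A_T$ and in $\tw A_T \otimes_{O_{P,s}} \compl{O_{P,s}}$ respectively. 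The essence of the argument — and the main obstacle — is precisely this reduction to an Artinian truncation on which completion becomes invisible; everything else is bookkeeping to check that the algebra structure (not merely the underlying module) descends, which follows because the multiplication on $\tw A_T / f^N \tw A_T$ is intrinsic to the truncated quotient.
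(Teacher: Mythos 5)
Your argument is correct, and it takes a somewhat different route from the paper's. The paper proves only the third (strongest) assertion, by writing down the inverse transformation in one shot: given formal-local crimps $O_{C_s}\subset O_{\tw C_s\times T}$, it defines the global subalgebra as the fiber product of $O_{\tw{\orb C}\times T}$ with $\prod_s O_{C_s}$ over $\prod_s O_{\tw C_s\times T}$, and then asserts that this is inverse to the restriction map. You instead factor the statement into its three stages and isolate, as the only substantive step, the comparison between $O_{P,s}$ and $\compl{O_{P,s}}$; your key observation --- that the cokernel of a crimp is killed by a power of the local equation $f$ of $\Sigma$, so that the subalgebra is the preimage of a subalgebra of the Artinian truncation $\tw A_T/f^N\tw A_T$, on which $O_{P,s}$ and $\compl{O_{P,s}}$ induce literally the same ring --- is exactly what makes the paper's fiber product well behaved (coherent, finite flat over $\orb P\times T$, with the correct branch divisor), and the paper leaves this to the reader with ``it is easy to check.'' So your version supplies the detail the paper elides, at the cost of being longer. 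Two small points you should make explicit rather than file under bookkeeping: (i) the uniform annihilation of the cokernel by $t^b$ (the equation of $\Sigma$ itself) is already proved in \autoref{thm:delta_inv}, so you may take $N$ independent of the crimp; and (ii) the two remaining conditions defining a crimp --- $T$-flatness of the quotient and $\br(\phi)=\Sigma\times T$ --- transfer between $O_{P,s}$ and $\compl{O_{P,s}}$ because the first is a condition on the truncated quotient (hence identical on both sides) and the second can be checked after the faithfully flat base change $O_{P,s}\to\compl{O_{P,s}}$, the discriminant commuting with base change. Note also that by the paper's conventions $O_{P,s}$ is the \'etale-local (henselian) stalk, not the Zariski one; your Artinian-truncation argument covers this uniformly since all three rings have the same quotients modulo powers of the maximal ideal.
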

\begin{proof}
  The last assertion is the strongest, so we prove that. Following \autoref{rem:crimp_subalgebra}, we treat crimps as subalgebras. For brevity, we set
    \[
      \compl{P}_s = \spec \compl{O_{P,s}}, \quad
      \Sigma_s = \Sigma \times_{P}\compl{P}_s, \text{ and }
      \tw{C}_s = \tw{\orb C} \times_{\orb P} \compl{P}_s,
      \]
      Given crimps $\tw{C}_s \times T \to C_s \to \compl{P}_s\times T$ for $s \in \supp(\Sigma)$, construct a subalgebra $O_{\orb C}$ of $O_{\tw{\orb C} \times T}$ as the fiber product of algebras
      \[
      \begin{tikzpicture}
        \matrix (m) [matrix of math nodes, row sep=1em, column sep=4em]{
          O_{\orb C} & O_{\tw{\orb C} \times T}\\
          \prod_s O_{C_s} & \prod_s O_{\tw C_s \times T}\\
        };
        \path [->]
        (m-1-1) edge (m-1-2)
        (m-1-1) edge (m-2-1)
        (m-1-2) edge (m-2-2)
        (m-2-1) edge (m-2-2);
      \end{tikzpicture}.
    \]
    We thus get a natural transformation 
    \[ \prod_{s \in \supp(\Sigma)} \Crimp(\tw \phi \times_{\orb P} \spec \compl{O_{P,s}}, \Sigma \times_{\orb P}\spec \compl{O_{P,s}}) \to \Crimp(\tw\phi, \Sigma).\]
    It is easy to check that it is inverse to the transformation in \eqref{eqn:crimp_formal}.
\end{proof}

\subsection{Crimps over a disk}\label{sec:crimps_disk}
Thanks to \autoref{thm:crimp_local}, we now focus on the crimps of covers of the formal disk. Set $R = k\f{t}$ and $\Delta = \spec R$. Let $\Delta^\circ$ be the punctured disk $\Delta\setminus\{0\}$. Fix a finite cover $\tw\phi \from \tw C \to \Delta$ of degree $d$, \'etale over $\Delta^\circ$, with $\br(\tw\phi)$ given by $\ideal{t^a}$. Fix a divisor $\Sigma \subset \Delta$ given by $\ideal{t^b}$ and set $\delta = (b-a)/2$.

\begin{proposition}\label{thm:delta_inv}
  Let $\tw C \times T \to C \stackrel\phi\to \Delta \times T$ be a crimp with $\br(\phi) = \Sigma \times T$. Set $Q = O_{\tw C \times T}/O_C$. Then $Q$ is a $T$-flat sheaf on $\Delta \times T$ annihilated by $t^b$. The restriction of $Q$ to the fibers of $\Delta \times T \to T$ has length $\delta$.
\end{proposition}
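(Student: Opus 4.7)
The plan is to compare the discriminants of $\phi$ and $\tw\phi$ using the classical formula for how the discriminant changes under an inclusion of algebras, and then extract information about $Q$ from Fitting ideals.

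First I would deduce $T$-flatness of $Q$ directly from the exact sequence
\[ 0 \to O_{\orb C} \to O_{\tw{\orb C} \times T} \to Q \to 0 \]
of $O_{\Delta \times T}$-modules. The middle term is $T$-flat because it is pulled back from $\tw{\orb C}$, and $O_{\orb C}$ is $T$-flat because $\phi$ has degree $d$, making $O_{\orb C}$ locally free of rank $d$ over the $T$-flat ring $O_{\Delta \times T}$. A quotient of a $T$-flat module by a $T$-flat submodule is $T$-flat.

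For the remaining two claims, observe that both $O_{\orb C}$ and $O_{\tw{\orb C} \times T}$ are locally free of rank $d$ over $O_{\Delta \times T}$. After shrinking $T$ and choosing local bases, the inclusion is represented by a $d \times d$ matrix $m$, so $Q$ is presented by $m$ and $\operatorname{Fitt}_0(Q) = (\det m)$. The crucial input is the classical relation
\[ \disc(\phi) = (\det m)^2 \cdot \disc(\tw\phi) \]
as ideals in $O_{\Delta \times T}$, which expresses the fact that the trace pairing on $O_{\orb C}$ is the restriction of the trace pairing on $O_{\tw{\orb C} \times T}$. Since $\disc(\phi) = (t^b)$ and $\disc(\tw\phi) = (t^a)$ by hypothesis, this forces $(\det m) = (t^\delta)$ up to units, and hence $\operatorname{Fitt}_0(Q) = (t^\delta)$.

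The two remaining claims now follow from standard properties of Fitting ideals. The containment $\operatorname{Fitt}_0(Q) \subset \operatorname{Ann}(Q)$ (a formal consequence of Cramer's rule) gives $t^\delta Q = 0$, and hence $t^b Q = 0$ since $\delta \leq b$. For the length on fibers, $\operatorname{Fitt}_0$ commutes with base change, so $\operatorname{Fitt}_0(Q_t) = (t^\delta) \subset k\f{t}$; and for a finite-length module $M$ over a DVR $R$ the structure theorem gives $\length M = \length\bigl(R/\operatorname{Fitt}_0(M)\bigr)$, yielding $\length Q_t = \delta$. The only nontrivial ingredient in this whole argument is the discriminant-change formula, which is classical and follows immediately from the intrinsic description of the trace pairing, so I do not foresee any real obstacle.
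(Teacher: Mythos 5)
Your strategy is in substance the same as the paper's: the heart of the matter is the multiplicativity of the discriminant under the inclusion $i \from O_C \into O_{\tw C \times T}$, which the paper extracts by applying the snake lemma to the two sequences defining the branch divisors, and which you invoke in its classical form $\disc(\phi) = (\det m)^2\disc(\tw\phi)$. The fibrewise length count is then the same computation in either packaging, and $\operatorname{Fitt}_0(Q) \subset \operatorname{Ann}(Q)$ even gives you a slightly stronger annihilation statement. (One small caveat: over a possibly non-reduced $T$, extracting the square root $(\det m) = (t^{\delta})$ from $(\det m)^2 = (t^{2\delta})$ needs a word of justification; but you can avoid it entirely, since $t^{2\delta}Q = 0$ with $2\delta = b-a \leq b$ already gives the annihilation claim, and for the length you may pass to a field-valued fibre first, where the square root in the DVR $k\f{t}$ is harmless.)

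The genuine problem is the flatness step. The principle ``a quotient of a $T$-flat module by a $T$-flat submodule is $T$-flat'' is false: already for two free rank-one modules, the inclusion $O_{\Delta\times T} \to O_{\Delta\times T}$ given by multiplication by a function $s$ on $T$ with nonempty zero locus has $T$-flat source and target but cokernel $O_{\Delta\times T}/(s)$, which is not $T$-flat. What is actually needed --- and what the paper's proof records --- is that $i$ remains injective after restriction to every fibre of $\Delta\times T \to T$; granting that, the vanishing of $\ker(i\otimes k(p)) = \Tor_1^{O_T}(Q,k(p))$ for all points $p$ of $T$ gives flatness of $Q$ by the local criterion. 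This fibrewise injectivity does hold in your situation, because on each fibre both $O_{C_p}$ and $O_{\tw C_p}$ are free $k\f{t}$-modules (hence $t$-torsion-free) and $i$ is an isomorphism after inverting $t$; but it is a special feature of crimps, not a formal consequence of the flatness of the two terms, and your argument as written skips exactly this point.
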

\begin{proof}
  In the proof, all the linear-algebraic operations are over $O_{\Delta \times T}$.  First, $Q$ is $T$-flat simply because the inclusion $i \from O_C \into O_{\tw C \times T}$ remains an inclusion when restricted to the fibers of $\Delta \times T \to T$. For the rest, consider the diagram
  \[
  \begin{tikzpicture}
    \matrix (m) [matrix of math nodes, row sep=1.5em, column sep=1em]{
      0 & O_{\Delta \times T} & (\det \dual{O_{\tw C \times T}})^{\otimes 2} & \tw B & 0\\
      0 & O_{\Delta \times T} & (\det \dual{O_C})^{\otimes 2}& B & 0\\
    };
    \path[->, math, map] 
    (m-1-2) edge node[auto]{\tw\delta} (m-1-3)
    (m-2-2) edge node[auto]{\delta} (m-2-3)
    \foreach \i in {1,2}{
      (m-\i-1) edge (m-\i-2)
      (m-\i-3) edge (m-\i-4) 
      (m-\i-4) edge (m-\i-5)
    }
      (m-1-3) edge node[auto] {\det(\dual{i})^2} (m-2-3)
      (m-1-4) edge (m-2-4);
    \draw [equal] (m-1-2) -- (m-2-2);
  \end{tikzpicture}.
  \]
  The horizontal maps $\tw \delta$ and $\delta$ define the respective branch divisors as in \autoref{sec:Ad}. In particular, $B$ is annihilated by $\ideal{t^b}$. The snake lemma yields the sequence
  \begin{equation}\label{eqn:bound_delta}
    0 \to \tw B \to B \to \cok(\det(\dual{i})^2) \to 0.
  \end{equation}
Since $t^b$ annihilates $B$, it annihilates $\cok(\det(\dual{i})^2)$, hence $\cok(\det(\dual{i}))$, hence $\cok(\dual{i})$ and hence $\cok i = Q$.

To compute the length of $Q$ on the fibers, replace $T$ by a field. By \eqref{eqn:bound_delta}, we get
  \begin{align*}
    2\length Q &= \length (\cok (\det(\dual{i})^2))\\
    &= \length B- \length \tw B \\
    &= b-a = 2\delta.
  \end{align*}
\end{proof}
\begin{remark}
  By \autoref{thm:delta_inv}, if $\tw C$ is smooth, then $\delta$ is indeed the $\delta$ invariant of $C$.
\end{remark}

 We now exhibit the space of crimps over a disk explicitly as a projective variety. Set $F = O_{\tw C}/t^b O_{\tw C}$ and denote by $\Quot = \Quot(F, \delta)$ the Quot scheme of length $\delta$ quotients of the $\Delta$ module $F$. Since $\supp F$ is projective (it is finite!), $\Quot$ is a projective scheme. The idea is to identify quotients which arise as $O_{\tw C}/O_C$. For this to be true, the quotient must satisfy the following two properties:
 \begin{compactenum}
 \item\label{eqn:subalg} The kernel must be closed under multiplication, to get a subalgebra $O_C$ of $O_{\tw C}$;
 \item\label{eqn:branch} The resulting $C \to \Delta$ must have the right branch divisor.
 \end{compactenum}
 We now formalize both conditions. Let $\pi \from \Delta \times \Quot \to \Delta$ be the projection. On $\Delta \times \Quot$ we have the universal sequence
 \[ 0 \to S \to F \otimes_k O_{\Quot} \to Q \to 0.\]
 The multiplication $F \otimes_\Delta F \to F$ induces maps
 \[ S \otimes_{\Delta \times \Quot} S \to (F \otimes_{\Delta} F) \otimes_k O_{\Quot} \to F \otimes_k O_{\Quot} \to Q.\]
 Define the closed subscheme $X \subset \Quot$ as the annihilator of the composite map $\pi_*(S\otimes_{\Delta \times \Quot} S) \to \pi_*Q$ on $\Quot$. This takes care of \eqref{eqn:subalg}.

 On $\Delta \times X$, the sheaf $S$ inherits the structure of an $O_{\Delta \times X}$ algebra. Form the subalgebra $O_C$ of $O_{\tw C \times X}$ as the fiber product 
 \[
 \begin{tikzpicture}
   \matrix (m) [matrix of math nodes, row sep=1em, column sep=4em]{
     O_C & O_{\tw C \times X}\\
     S & F \otimes_k O_{X}\\
   };
   \path [->]
   (m-1-1) edge (m-1-2)
   (m-1-1) edge (m-2-1)
   (m-1-2) edge (m-2-2)
   (m-2-1) edge (m-2-2);
 \end{tikzpicture},
 \]
 and set $C = \spec O_C$. 
\begin{claim}
  In the above setup, $C \to \Delta \times X$ is flat.
\end{claim}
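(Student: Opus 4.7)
The plan is to establish flatness by the fiberwise criterion applied to the projection $\Delta \times X \to X$. Concretely, it will suffice to verify (a) that $O_C$ is flat over $O_X$, and (b) that for each geometric point $x \in X$, the restriction $O_C \otimes_{O_X} \kappa(x)$ is flat as an $O_\Delta = k\f{t}$-module. Granted both, the standard fiberwise criterion (EGA IV$_3$, 11.3.10) yields the desired flatness of $O_C$ as a sheaf on $\Delta \times X$.

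The essential input is the short exact sequence of $O_{\Delta \times X}$-modules
\[ 0 \to O_C \to O_{\tw C \times X} \to Q \to 0, \]
coming directly from the fiber-product construction: by definition, $O_C$ is the preimage of $S \subset F \otimes_k O_X$ under the surjection $O_{\tw C \times X} \onto F \otimes_k O_X$, and the resulting cokernel is exactly $Q = (F \otimes_k O_X)/S$. Here $O_{\tw C \times X} = O_{\tw C} \otimes_k O_X$ is manifestly $O_X$-flat, while $Q$ is $O_X$-flat by the defining property of $\Quot$ (the subalgebra locus $X$ is base-changed from $\Quot$, so flatness of the universal quotient persists). A Tor-sequence argument then immediately gives (a) and, moreover, shows that the sequence remains short exact after applying $- \otimes_{O_X} \kappa(x)$, producing an inclusion $O_C \otimes_{O_X} \kappa(x) \into O_{\tw C}$.

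For (b), I would simply observe that $O_{\tw C}$ is finite flat over the DVR $k\f{t}$, hence free as a $k\f{t}$-module, and in particular torsion-free. A fortiori its submodule $O_C \otimes_{O_X} \kappa(x)$ is torsion-free over $k\f{t}$, and a finitely generated torsion-free module over a DVR is free, so in particular flat.

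There is no real obstacle here: the argument is mechanical once the defining two-term sequence is in hand. The only point worth isolating — the $O_X$-flatness of $Q$ — is not a computation but rather the universal property of the Quot scheme, which was precisely the reason for building $X$ inside $\Quot$ to begin with.
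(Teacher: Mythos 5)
Your argument is correct and follows essentially the same route as the paper: both start from the short exact sequence $0 \to O_C \to O_{\tw C \times X} \to Q \to 0$, use the $X$-flatness of the universal quotient $Q$ to deduce $X$-flatness of $O_C$ and exactness after restriction to fibers, and then observe that each fiber $O_{C_x}$ is a submodule of the free $k\f{t}$-module $O_{\tw C}$, hence free. The paper phrases the conclusion as local freeness over $\Delta \times X$ rather than citing the fiberwise criterion explicitly, but the content is identical.
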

\begin{proof}
  By the definition of $O_C$, we have the sequence
 \[ 0 \to O_C \to O_{\tw C \times X} \to Q \to 0.\]
 Since $Q$ is $X$-flat, we conclude that $O_C$ is $X$-flat and $O_C \to O_{\tw C \times X}$ remains an inclusion when restricted to the fibers of $\Delta \times X \to X$. For every point $x \in X$, the sheaf $O_{C_x}$ is a subsheaf of the free sheaf $O_{\tw C}$ and hence is free. It follows that $O_C$ is a locally free $\Delta \times X$ module.
\end{proof}

We currently have $\tw C \times X \to C \stackrel\phi\to \Delta \times X$, where $\tw C \to C$ is an isomorphism over $\Delta^\circ \times X$ and $C \to \Delta \times X$ is finite and flat. We now enforce \eqref{eqn:branch}. Define $B$ by
 \[ 0 \to O_{\Delta \times X} \stackrel\delta\to (\det \dual{O_C})^{\otimes 2} \to B \to 0,\]
 where the linear algebraic operations are over $O_{\Delta \times X}$, and $\delta$ is the usual discriminant as in \autoref{sec:Ad}. Observe that $\delta$ remains an injection when restricted to the fibers of $\pi \from \Delta \times X \to X$, and hence $B$ is $X$-flat. See that $B$ has fiberwise length $b$. Define the closed subscheme $Y \subset X$ as the annihilator of 
 \[ \pi_* B \stackrel{t^b}\longrightarrow \pi_* B.\]
This condition would be superfluous if $X$ were reduced. However, it appropriately restricts the non-reduced structure on $X$, taking care of \eqref{eqn:branch}. 

By construction, we have a crimp $\tw C \times Y \to C \stackrel\phi\to \Delta \times Y$ with $\br\phi = \Sigma \times Y$. We thus get a morphism
\begin{equation}\label{eqn:crimp_projective}
 Y \to \Crimp(\tw\phi, \Sigma).
\end{equation}

\begin{proposition}\label{thm:crimps_over_disk}
  The morphism $ Y \to \Crimp(\tw\phi, \Sigma)$ in \eqref{eqn:crimp_projective} is an isomorphism. In particular, $\Crimp(\tw \phi, \Sigma)$ is a projective scheme.
\end{proposition}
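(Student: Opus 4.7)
The plan is to construct an inverse to the natural transformation $Y \to \Crimp(\tw\phi, \Sigma)$ of \eqref{eqn:crimp_projective}. Given a crimp $(\tw C \times T \to C \stackrel{\phi}{\to} \Delta \times T)$ over a $k$-scheme $T$, I would set $Q = O_{\tw C \times T}/O_C$. By \autoref{thm:delta_inv}, $Q$ is $T$-flat, annihilated by $t^b$, and of fiberwise length $\delta$. Hence $Q$ is canonically a length-$\delta$ quotient of $F \otimes_k O_T = O_{\tw C \times T}/t^b O_{\tw C \times T}$, defining a classifying morphism $\mu \from T \to \Quot(F, \delta)$.

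Next I would verify that $\mu$ factors through $Y$. The kernel $S$ of $F \otimes_k O_T \onto Q$ equals $O_C/t^b O_{\tw C \times T}$; since $O_C \subset O_{\tw C \times T}$ is a unital subalgebra containing $t^b O_{\tw C \times T}$, the submodule $S$ is a unital subalgebra of $F \otimes_k O_T$. In particular $S \cdot S \subset S$, so the induced map $S \otimes S \to Q$ vanishes and $\mu$ lands in $X$. For the refinement from $X$ to $Y$, the sheaf $B$ appearing in the discriminant sequence is annihilated by $t^b$, precisely because $\br\phi = \Sigma \times T = V(t^b)$ by assumption on the crimp.

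Then I would check the two constructions are mutually inverse. Starting from a crimp $O_C \subset O_{\tw C \times T}$, the fibered-product recipe on $Y$ recovers $O_C$ as $\{f \in O_{\tw C \times T} : \bar f \in S\}$, which coincides with the original $O_C$ because $t^b O_{\tw C \times T} \subset O_C$, so $O_C$ is determined by its image $S$ in $F \otimes_k O_T$. Conversely, starting from a $T$-point of $Y$ with associated subalgebra $O_C \subset O_{\tw C \times T}$, the quotient $O_{\tw C \times T}/O_C$ is tautologically $(F \otimes_k O_T)/S = Q$. Both verifications are natural in $T$, yielding an isomorphism of functors $Y \isom \Crimp(\tw\phi, \Sigma)$; projectivity of $\Crimp(\tw\phi, \Sigma)$ then follows from that of $\Quot(F, \delta)$.

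The main subtlety I anticipate is ensuring that $X$ (and hence $Y$) encodes the unit condition $1 \in S$ in addition to the closure $S \cdot S \subset S$ imposed in the construction; this is a closed condition on $\Quot$, given by the vanishing of the image of $1 \in O_{\Delta \times \Quot}/t^b$ in the universal $Q$ after pushforward. Once folded into the definition of $X$, the bijection above is well-defined, and all remaining verifications reduce to routine manipulations with subalgebras of $O_{\tw C \times T}$ via the fiber-product description.
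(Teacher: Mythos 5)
Your proposal is correct and follows essentially the same route as the paper's proof: construct the inverse $\Crimp(\tw\phi,\Sigma) \to Y$ by sending a crimp to the quotient $Q = O_{\tw C \times T}/O_C$, invoke \autoref{thm:delta_inv} to land in $\Quot(F,\delta)$, and check the factorization through $X$ (closure of $S$ under multiplication) and then $Y$ (the branch-divisor condition), with the mutual-inverse check reducing to the fact that $O_C$ is recovered from $S$ because $t^b O_{\tw C \times T} \subset O_C$. Your flagged subtlety about the unit is a fair catch: the paper's $X$ only imposes $S \cdot S \subset S$ and tacitly assumes $S$ is a unital subalgebra when forming $\spec O_C$, and imposing the additional closed condition that $\bar 1$ lies in $S$ (equivalently, that $1$ maps to zero in $\pi_* Q$) is the correct repair and changes nothing else in the argument.
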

\begin{proof}
  We construct the inverse $\Crimp(\tw \phi, \Sigma) \to Y$ to \eqref{eqn:crimp_projective}. Let $T$ be a scheme and $\tw C \times T \to C \stackrel\phi\to \Delta \times T$ a crimp with branch divisor $\Sigma \times T$. Define the quotient $Q = O_{\tw C \times T}/O_C$. By \autoref{thm:delta_inv}, $Q$ is a $T$-flat quotient of $O_{\tw C \times T}/t^b O_{\tw C \times T} = F \otimes_k O_T$, fiberwise of length $\delta$. This gives a map $T \to \Quot(F, \delta)$. Since the kernel of $F \otimes_k O_T \to Q$ is the image of $O_C$, it is closed under multiplication. Hence $T \to \Quot$ factors through $T \to X$. Since $\br(\phi) = \Sigma \times T$, the cokernel of 
  \[O_{\Delta \times T} \stackrel\delta\to (\det \dual{O_{C}})^{\otimes 2}\]
  is annihilated by $t^b$. Therefore, the map $T \to X$ factors through $T \to Y$. In this way, we get a morphism $\Crimp(\tw\phi, \Sigma) \to Y$, which is clearly inverse to \eqref{eqn:crimp_projective}.
\end{proof}
\begin{corollary}\label{thm:crimps_projective}
  Let $\tw{\orb C} \to \orb P$ be a finite cover of an orbinodal curve and $\Sigma \subset P^\gen$ a divisor. Then the functor $\Crimp(\tw{\orb C} \to \orb P, \Sigma)$ is representable by a projective scheme.
\end{corollary}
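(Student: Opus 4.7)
The plan is to reduce the global problem to a finite product of local problems, each of which is already handled by the preceding proposition. Since $\Sigma \subset P^\gen$ is a Cartier divisor on a one-dimensional object, its support consists of finitely many closed points $s_1,\dots,s_r$, all lying in the smooth, non-stacky locus of $\orb P$.

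First I would invoke \autoref{thm:crimp_local}~(3) to obtain a natural isomorphism
\[
  \Crimp(\tw\phi, \Sigma) \;\isom\; \prod_{i=1}^{r} \Crimp\bigl(\tw\phi \times_{\orb P} \spec \compl{O_{P,s_i}},\; \Sigma \times_{\orb P} \spec \compl{O_{P,s_i}}\bigr).
\]
Because each $s_i$ lies in $P^\gen$, the completion $\spec \compl{O_{P,s_i}}$ is isomorphic to a formal disk $\spec k\f{t}$, and the pulled-back cover $\tw\phi \times_{\orb P} \spec \compl{O_{P,s_i}}$ is a finite cover of this disk, étale away from the origin, with branch divisor cut out by a power of $t$. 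Thus each factor on the right-hand side falls precisely into the setup of \autoref{sec:crimps_disk}.

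Next I would apply \autoref{thm:crimps_over_disk} to each factor: each $\Crimp\bigl(\tw\phi \times_{\orb P} \spec \compl{O_{P,s_i}}, \Sigma \times_{\orb P}\spec \compl{O_{P,s_i}}\bigr)$ is representable by a projective scheme $Y_i$. A finite product of projective $k$-schemes is projective (via Segre), so the product $\prod_i Y_i$ represents $\Crimp(\tw\phi,\Sigma)$ and is projective.

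I don't anticipate any real obstacle here; all the substantial content—the local structure of crimps, the Quot-scheme construction, and the closed conditions cutting out $Y$ inside $\Quot(F,\delta)$—was already established in \autoref{thm:crimp_local} and \autoref{thm:crimps_over_disk}. The only small point to verify is that the natural transformation of \autoref{thm:crimp_local}~(3) is indeed an isomorphism of functors (not merely a bijection on $k$-points), so that representability transfers through the product; but this follows directly from the proof of \autoref{thm:crimp_local}, which constructs the inverse transformation via the algebra fiber product and works over arbitrary test schemes $T$.
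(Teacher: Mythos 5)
Your proposal is correct and follows exactly the paper's own argument: the paper also deduces the corollary immediately from \autoref{thm:crimp_local} (the formal-local decomposition of the crimp functor) combined with \autoref{thm:crimps_over_disk} (projectivity of each local factor). The points you flag for verification---that the points of $\supp\Sigma$ lie in the smooth non-stacky locus so each completed local ring is a formal disk, and that the decomposition is an isomorphism of functors over arbitrary test schemes---are indeed the only things to check, and both hold as you describe.
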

\begin{proof}
  Follows immediately from \autoref{thm:crimp_local} and \autoref{thm:crimps_over_disk}.
\end{proof}

Finally, we relate the spaces of crimps with the fibers of $\br \from \st H^d \to \st M$. Let $p \from \spec k \to \st M$ be a point corresponding to a divisorially marked, pointed curve $(P; \Sigma; \sigma_1, \dots, \sigma_n)$. As usual, we abbreviate $\sigma_1, \dots, \sigma_n$ by $\sigma$. Let $\Gamma$ be the set of $(\orb P \to P; \sigma; \tw\phi \from \tw{\orb C} \to \orb P)$, where $(\orb P \to P; \sigma)$ is a pointed orbinodal curve and $\tw\phi$ a finite cover of degree $d$ such that
\begin{compactenum}
\item $\tw{\orb C} \times_{\orb P}{\orb P}^\sm$ is smooth;
\item $\tw\phi$ is \'etale over $\orb P \setminus \Sigma$; and
\item $\tw\phi$ corresponds to a representable classifying map $\orb P \to \st A_d$.
\end{compactenum}
Assume that no two elements of $\Gamma$ are isomorphic over the identity of $P$. Then $\Gamma$ is a finite set. We have a morphism
\begin{equation}\label{eqn:crimp_to_br_fiber}
  \bigsqcup_\Gamma \Crimp(\tw\phi, \Sigma) \to  p \times_{\st M} \st H^d.
\end{equation}
given by
\[ (\tw {\orb C} \times T \to \orb C \stackrel\phi\to \orb P \times T) \mapsto (\orb P \times T \to P \times T; \sigma \times T; \orb C \stackrel\phi\to \orb P \times T).\]
Recall that we have an action of $\Aut(\tw\phi)$ on $\Crimp(\tw\phi, \Sigma)$. The morphism above clearly descends to a morphism
\begin{equation}\label{eqn:crimp_to_br_fiber_mod_automorphisms}
  \bigsqcup_\Gamma [\Crimp(\tw\phi, \Sigma)/\Aut(\tw\phi)] \to  p \times_{\st M} \st H^d.
\end{equation}

\begin{proposition}\label{thm:crimp_fiber}
  The morphism in \eqref{eqn:crimp_to_br_fiber} is finite and surjective. The morphism in \eqref{eqn:crimp_to_br_fiber_mod_automorphisms} is representable and a bijection on $k$-points.
\end{proposition}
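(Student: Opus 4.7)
\emph{Proof plan.} The essential bridge between the source and the target of both morphisms is normalization over the smooth locus of $\orb P$, whose universal property governs the necessary identifications. My plan is to first produce the surjectivity and the bijection on $k$-points via the normalization construction, then derive finiteness of \eqref{eqn:crimp_to_br_fiber} from projectivity of the source together with properness of the target, and finally deduce representability of \eqref{eqn:crimp_to_br_fiber_mod_automorphisms} from uniqueness of normalization lifts.

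Starting with $(\orb P \to P; \sigma; \phi \from \orb C \to \orb P)$ in $p \times_{\st M} \st H^d$, the branch divisor of $\phi$ is $\Sigma \subset P^\gen$, so $\phi$ is \'etale over $\orb P \setminus \Sigma$. I would define $\tw{\orb C} \to \orb C$ by normalizing over $\orb P^\sm$ and leaving $\orb C$ unchanged over the nodes and marked points, where $\phi$ is already \'etale. The resulting cover $\tw\phi \from \tw{\orb C} \to \orb P$ is then \'etale over $\orb P \setminus \Sigma$, smooth on the preimage of $\orb P^\sm$, and its classifying map is representable because it agrees with that of $\phi$ at the stacky points. Hence $\tw\phi$ represents a (unique) element of $\Gamma$ and the factorization $\tw{\orb C} \to \orb C \stackrel\phi\to \orb P$ exhibits $\phi$ as a crimp of $\tw\phi$. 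This gives surjectivity on $k$-points of \eqref{eqn:crimp_to_br_fiber}. For injectivity modulo $\Aut(\tw\phi)$, if two crimps $(\tw\phi_i, c_i)$ give covers with $\orb C_1 \cong \orb C_2$ over $\orb P$, the universal property of normalization transports this to a canonical isomorphism $\tw{\orb C_1} \cong \tw{\orb C_2}$ over $\orb P$; since $\Gamma$ contains no two isomorphic elements, this forces $\tw\phi_1 = \tw\phi_2 = \tw\phi$, and the residual ambiguity is precisely an element of $\Aut(\tw\phi)$. This establishes the bijection on $k$-points of \eqref{eqn:crimp_to_br_fiber_mod_automorphisms}.

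For finiteness of \eqref{eqn:crimp_to_br_fiber}, the source is projective by \autoref{thm:crimps_projective} and finiteness of $\Gamma$, while the target $p \times_{\st M} \st H^d$ is a proper (hence separated) Deligne--Mumford stack by \autoref{thm:big_hurwitz}. The morphism is therefore of finite type from a proper source to a separated target, so it is proper; it is representable since the source is a scheme and the target is Deligne--Mumford; and by the preceding paragraph each set-theoretic fiber is a single $\Aut(\tw\phi)$-orbit, hence finite. A proper, representable, quasi-finite morphism is finite, and surjectivity on $k$-points (already established) upgrades to surjectivity of the morphism.

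Finally, for representability of \eqref{eqn:crimp_to_br_fiber_mod_automorphisms}, it suffices to show that the stabilizer of each crimp $c$ injects into the automorphism group of the corresponding cover. An element $g \in \Aut(\tw\phi)$ fixing $c$ preserves the subalgebra $O_{\orb C} \subset O_{\tw{\orb C}}$ and hence descends to an automorphism $\o g$ of $\phi \from \orb C \to \orb P$; if $\o g = \id_{\orb C}$, then $g$ and $\id_{\tw{\orb C}}$ are both lifts of $\id_{\orb C}$ through the normalization $\tw{\orb C} \to \orb C$ over $\orb P^\sm$, so by uniqueness of such lifts $g = \id_{\tw{\orb C}}$. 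The main subtlety throughout is the interplay between the stacky structure of the target and the universal property of normalization, but once that is set up cleanly everything else is formal.
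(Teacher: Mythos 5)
Your proposal is correct and follows essentially the same route as the paper: partial normalization over $\orb P^{\sm}$ identifies each fiber of \eqref{eqn:crimp_to_br_fiber} with a single $\Aut(\tw\phi)$-orbit, and representability of \eqref{eqn:crimp_to_br_fiber_mod_automorphisms} follows because an automorphism of $\tw{\orb C}$ descending to the identity on $\orb C$ must itself be the identity. Your derivation of finiteness (projective source, separated target, hence proper, plus quasi-finiteness) just makes explicit a step the paper leaves implicit.
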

\begin{proof}
  The statement is true almost by design. Nevertheless, here are the details. Let $s \from \spec k \to p \times_{\st M}\st H^d$ be a point given by $(\orb P' \to P; \sigma; \orb C \to \orb P')$. We first check that the fiber of \eqref{eqn:crimp_to_br_fiber} over $s$ is nonempty and forms one orbit under the group action. Let $\orb C' \to \orb C$ be the partial normalization obtained by normalizing $\orb C$ away from its nodes over the nodes of $\orb P'$. Then $(\orb P' \to P; \sigma; \orb C' \to \orb P')$ is isomorphic to some
    $(\orb P \to P; \sigma; \tw{\orb C} \to \orb P)$ in $\Gamma$. Identify $\orb P'$ and $\orb P$ via an isomorphism $\orb P \isom \orb P'$ over the identity of $P$.
    
    For every choice of isomorphism $\tw{\orb C} \to \orb C'$ over $\orb P$, we have a point $\tw{\orb C} \to \orb C \to \orb P$ of $\Crimp(\tw{\orb C} \to \orb P, \Sigma)$ lying over $s$. Conversely, it is clear these are exactly the points in the fiber of \eqref{eqn:crimp_to_br_fiber} over $s$. We conclude that \eqref{eqn:crimp_to_br_fiber} is finite, surjective  and \eqref{eqn:crimp_to_br_fiber_mod_automorphisms} is a bijection on $k$ points.

Finally, suppose we have a non-trivial automorphism $\tw \phi \from \tw{\orb C}\to \tw {\orb C}$ over the identity of $\orb P$ that induces an automorphism $\phi \from \orb C \to \orb C$. Then, clearly, $\phi$ is non-trivial. Hence \eqref{eqn:crimp_to_br_fiber_mod_automorphisms} is representable.
\end{proof}

\autoref{thm:crimp_fiber} is as close as we can come to explicitly identifying the fibers of $\br \from \st H^d \to \st M$. This is good enough for determining many crude properties like dimension.


\section*{Acknowledgements}
  This work is a part of my PhD thesis. I am deeply grateful to my adviser Joe Harris for his invaluable insight and immense generosity. My heartfelt thanks to Maksym Fedorchuk for inspiring this project and providing guidance at all stages. I thank Dan Abramovich, Brendan Hassett, Anand Patel, David Smyth and Ravi Vakil for valuable suggestions and conversations.

\bibliographystyle{abbrvnat}
\bibliography{CommonMath}
\end{document}